\newtheorem{thm}{Theorem}[section]
\newtheorem{cor}[thm]{Corollary}
\newtheorem{lem}[thm]{Lemma}
\newtheorem{con}[thm]{Conjecture}
\newtheorem{prop}[thm]{Proposition}
\theoremstyle{definition}
\newtheorem{defn}[thm]{Definition}
\theoremstyle{remark}
\newtheorem{rem}[thm]{Remark}
\newtheorem{example}{Example}
\numberwithin{equation}{section}
\newcommand{\norm}[1]{\left\Vert#1\right\Vert}
\newcommand{\abs}[1]{\left\vert#1\right\vert}
\newcommand{\set}[1]{\left\{#1\right\}}
\newcommand{\To}{\longrightarrow}
\newcommand{\ds}{\displaystyle}
\newcommand{\R}{\mathbb{R}}
\newcommand{\N}{\mathbb{N}}
\DeclareMathOperator{\NA}{NA}
\DeclareMathOperator{\Hom}{Hom}
\begin{document}
\setcounter{tocdepth}{1}
%\date{\today}

%\renewcommand{\baselinestretch}{1.1}

\title{Norm-attaining lattice homomorphisms}
\author[Dantas]{Sheldon Dantas}
\address[Dantas]{Department of Mathematics, Faculty of Electrical Engineering, Czech Technical University in Prague, Technick\'a 2, 166 27, Prague 6, Czech Republic \newline
	\href{http://orcid.org/0000-0001-8117-3760}{ORCID: \texttt{0000-0001-8117-3760} } }
\email{\texttt{gildashe@fel.cvut.cz}}

\author[Mart\'inez-Cervantes]{Gonzalo Mart\'inez-Cervantes}
\address[Mart\'inez-Cervantes]{Universidad de Murcia, Departamento de Matem\'{a}ticas, Campus de Espinardo 30100 Murcia, Spain
	\newline
	\href{http://orcid.org/0000-0002-5927-5215}{ORCID: \texttt{0000-0002-5927-5215} } }	

\email{gonzalo.martinez2@um.es}

\author[Rodr\'iguez Abell\'an]{Jos\'e David Rodr\'iguez Abell\'an}
\address[Rodr\'iguez Abell\'an]{Universidad de Murcia, Departamento de Matem\'{a}ticas, Campus de Espinardo 30100 Murcia, Spain 	\newline
	\href{https://orcid.org/0000-0002-2764-0070}{ORCID: \texttt{0000-0002-2764-0070} }}

\email{josedavid.rodriguez@um.es}

\author[Rueda Zoca]{Abraham Rueda Zoca}
\address[Rueda Zoca]{Departamento de An\'alisis Matem\'atico, 18071, Granada, Spain
	\newline
	\href{https://orcid.org/0000-0003-0718-1353}{ORCID: \texttt{0000-0003-0718-1353} }}
\email{\texttt{abrahamrueda@ugr.es}}
\urladdr{\url{https://arzenglish.wordpress.com}}

\thanks{S. Dantas was supported by the project OPVVV CAAS CZ.02.1.01/0.0/0.0/16\_019/0000778 and by the Estonian Research Council grant PRG877. G. Mart\'inez-Cervantes and J. D. Rodr\'iguez Abell\'an were supported by the project MTM2017-86182-P (Government of Spain, AEI/FEDER, EU) and project 20797/PI/18 by Fundaci\'{o}n S\'{e}neca, ACyT Regi\'{o}n de Murcia. The research of G. Mart\'inez-Cervantes has been co-financed by the European Social Fund (ESF) and the Youth European Initiative (YEI) under the Spanish Seneca Foundation (CARM) (ref. 21319/PDGI/19).
	J. D. Rodr\'iguez Abell\'an was supported by FPI contract of Fundaci\'on S\'eneca, ACyT Regi\'{o}n de Murcia.
	The research of A. Rueda Zoca was supported by MICINN (Spain) Grant PGC2018-093794-B-I00 (MCIU, AEI, FEDER, UE), by Junta de Andaluc\'ia Grant A-FQM-484-UGR18 and by Junta de Andaluc\'ia Grant FQM-0185}

\keywords{Banach lattice; Free Banach lattice; Norm attainment; James theorem; Bishop-Phelps theorem}

\subjclass[2010]{46B20, 46B42, 46B04}

\begin{abstract} 
In this paper we study the structure of the set $\Hom(X,\R)$ of all lattice homomorphisms from a Banach lattice $X$ into $\R$. Using the relation among lattice homomorphisms and disjoint families, we prove that the topological dual of the free Banach lattice $FBL(A)$ generated by a set $A$ contains a disjoint family of cardinality $2^{|A|}$, answering a question of B. de Pagter and A. W. Wickstead. We also deal with norm-attaining lattice homomorphisms.
For classical Banach lattices, as $c_0$, $L_p$-, and $C(K)$-spaces, every lattice homomorphism on it attains its norm, which shows, in particular, that there is no James theorem for this class of functions. We prove that, indeed, every lattice homomorphism on $X$ and $C(K,X)$ attains its norm whenever $X$ has order continuous norm.
On the other hand, we provide what seems to be the first example in the literature of a lattice homomorphism which does not attain its norm. In general, we study the existence and characterization of lattice homomorphisms not attaining their norm in free Banach lattices. 
As a consequence, it is shown that no Bishop-Phelps type theorem holds true in the Banach lattice setting, i.e. not every lattice homomorphism can be approximated by norm-attaining lattice homomorphisms.
\end{abstract}

\maketitle

%Nuevo

\section{Introduction} 
	
It is well-known that in a Banach space $E$, the set of all continuous linear functionals from $E$ into $\R$ determines almost totally the structure of $E$ as a Banach space. As a matter of fact, by the Hahn-Banach Theorem, the norm of any element $x \in E$ can be calculated as the supremum over all continuous functionals in the unit ball $B_{E^*}$ of the topological dual space $E^*$ of $E$. On the other hand, James theorem \cite{J} states that a Banach space is reflexive if and only if every functional in the dual attains its norm. Moreover, Bishop and Phelps \cite{BP} proved that every functional can be approximated by functionals which attain their norms.
In the Banach lattice setting, it is natural to consider that the role of linear continuous functionals is played by lattice homomorphisms, i.e. the linear continuous functionals which, in addition, respect lattice operations. In this paper, we wonder 
what can be deduced about a Banach lattice from its set of lattice homomorphisms.

\bigskip

Since James and Bishop-Phelps theorems, the theory of norm-attaining functionals were intensively studied. In fact, this theory has been widely extended to different contexts besides linear functionals. Indeed, among others, some authors considered it in the context of linear operators (see \cite{B, H, JW, linds2, S, U, Z}); others studied norm-attaining bilinear mappings (see \cite{gasp, AFW, Choi}); and more recently several problems on norm-attainment of homogeneous polynomials and Lipschitz maps were considered (see  \cite{AGM, ADM} and \cite{CCGMR,MPR2,CGMR}, respectively). In the context of homomorphisms on Banach lattices, we should highlight the recent paper \cite{OT}, where a James type theorem was proved for positive linear functionals on some Banach lattices (see \cite[\textsection 6]{OT}). Notice that positive linear functionals are functionals which respect the order in the Banach lattice, but they do not need to respect the lattice operations; in this paper we focus on the much more restrictive subclass of the set of positive linear functionals $x^*$ on $X^*$ for which, moreover, both equalities $x^*(x \vee y)=x^* (x) \vee x^* (y)$ and $x^*(x \wedge y)=x^* (x) \wedge x^*(y)$ hold for every $x,y\in X$ (i.e. the subclass of lattice homomorphisms). Whereas it is simple to provide an example of a positive linear functional not attaining its norm in a Banach lattice, finding a not norm-attaining lattice homomorphism becomes a delicate problem. Indeed, as far as we know, we present in this paper the first examples of such elements.

\bigskip

Let us describe the content of this paper. Section \ref{section:notation} is devoted to present some notation and necessary background. We will be working with the free Banach lattices $FBL(A)$ generated by a set $A$ with no extra structure as well as the (more general) Banach lattices $FBL[E]$ generated by a Banach space $E$. We prove some elementary results that will be useful to solve some problems throughout the article. 

In Section \ref{SectionStructureHom}, we provide some general results on the structure of $\Hom(X, \R)$ and its relation with disjoint families, which allow us to answer a question posed by B. de Pagter and A. W. Wickstead in \cite{dPW15}. Moreover, we show that every separable Banach lattice embeds into a Banach lattice whose set of lattice homomorphisms is trivial, i.e. a Banach lattice $X$ for which $\Hom(X, \R)=\{0\}$.

In Section \ref{SectionPositiveCases}, motivated by \cite{OT}, we wonder whether there is a James type theorem for $\Hom(X, \R)$. For classical Banach lattices (as $c_0$, $L_p(\mu)$-, and $C(K)$-spaces), the set $\Hom(X, \R)$ is very small, in the sense that, not just a James type theorem does not hold, but also that every homomorphism attains its norm. 
Along this section we prove that every lattice homomorphism on $X$ and $C(K,X)$ attains its norm whenever $X$ has order continuous norm.

In Section \ref{SectionNegativeCases}, using free Banach lattices we are able to present the first examples of lattice homomorphisms which do not attain their norm. In particular, we show that if $E$ is an $L_1$-space, a separable $L_1$-predual or a Lipschitz-free Banach space over a metric space with cluster points, then $\Hom(FBL[E], \R)$ contains a lattice homomorphism which does not attain its norm.
Moreover, we characterize lattice homomorphisms attaining their norm on $FBL[E]$ whenever $E$ is an isometric predual of $\ell_1(A)$ or is isometric to $\ell_1(A)$ for some infinite set $A$. These results allow us to show that no Bishop-Phelps theorem holds in the class of Banach lattices, i.e. that there are lattice homomorphisms which cannot be approximated in norm by  norm-attaining lattice homomorphisms.

\section{Background and Notation} \label{section:notation}

Let us present all the necessary background material so that the paper can be fully accessible. Throughout the paper, all the Banach spaces and Banach lattices are considered to be {\bf real}. If $X, Y$ are Banach lattices, we say that $T\colon X \longrightarrow Y$ is a {\it Banach lattice homomorphism}, or simply, a {\it lattice homomorphism}, if it is a linear bounded operator preserving the lattice operations, that is, $T(x \wedge y) = T(x) \wedge T(y)$ and $T(x \vee y) = T(x) \vee T(y)$ for every $x, y \in X$. By a \textit{lattice homomorphism on} $X$ we mean a functional in $X^*$ which also preserves suprema and infima. We denote by $B_X$ the unit ball of $X$ and by $S_X$ the unit sphere of $X$.

Given a non-empty set $A$ with no extra structure, the {\it free Banach lattice generated by the set $A$} is a Banach lattice $F$ together with a bounded map $\phi\colon A \longrightarrow F$ with the property that for every Banach lattice $X$ and every bounded map $T \colon A \longrightarrow X$, there is a unique Banach lattice homomorphism $\hat{T}\colon F \longrightarrow X$ such that $T = \hat{T} \circ \phi$ and $\|{\hat{T}}\| = \| T \|$. In other words, the following diagram commutes:
$$\xymatrix{A\ar_{\phi}[d]\ar[rr]^T&&X\\
	F\ar_{\hat{T}}[urr]&& }$$
Let us clarify here that the norm of $T$ is given by $\sup \set{\norm{T(a)} : a \in A}$ while the norm of $\hat{T}$ is the usual one for Banach spaces. We refer the reader to the seminal paper \cite{dPW15} for more background on free Banach lattices generated by a set. On the other hand, the concept of a Banach lattice freely generated by a given Banach space $E$ was recently introduced  by A. Avil\'es, J. Rodr\'iguez, and P. Tradacete in \cite{ART18}. This provides a new tool for better understanding the relation between Banach spaces and Banach lattices.  The {\it free Banach lattice generated by a Banach space $E$} is a Banach lattice $F$ together with a bounded operator $\phi\colon E \longrightarrow F$ with the property that for every Banach lattice $X$ and every bounded operator $T \colon E \longrightarrow X$, there is a unique Banach lattice homomorphism $\hat{T}\colon F \longrightarrow X$ such that $T = \hat{T} \circ \phi$ and $\| \hat{T} \| = \| T \|$. In other words, the following diagram commutes:
$$\xymatrix{E\ar_{\phi}[d]\ar[rr]^T&&X\\
	F\ar_{\hat{T}}[urr]&& }$$

\noindent
This definition generalizes the notion of a free Banach lattice generated by a set $A$. Indeed, the free Banach lattice generated by a set $A$ can be naturally identified with the free Banach lattice generated by the Banach space $\ell_1(A)$ (see Corollary 2.8 in \cite{ART18}).

\bigskip

It is possible, though, to give an explicit description of the free Banach lattice $FBL(A)$ as a space of functions. Indeed, for $a \in A$, let $\delta_a\colon[-1,1]^A \longrightarrow \mathbb{R}$ be the {\it evaluation function} given by $\delta_a(x^*) = x^*(a)$ for every $x^* \in [-1,1]^A$. For $f\colon[-1,1]^A \longrightarrow \mathbb{R}$, define the norm
\begin{equation*} 
\|f\|_{FBL(A)} = \sup \set{\sum_{i = 1}^n \abs{ f(x_{i}^{\ast})} :  n \in \mathbb{N}, \, x_1^{\ast}, \ldots, x_n^{\ast} \in [-1,1]^A, \text{ }\sup_{a \in A} \sum_{i=1}^n \abs{x_i^{\ast}(a)} \leq 1 }.
\end{equation*} 
Then, the free Banach lattice generated by $A$ is the Banach lattice generated by $\{\delta_a : a\in A\}$ inside the Banach lattice of the functions in $\mathbb{R}^{[-1,1]^A}$ with finite norm $\| \cdot \|_{FBL(A)}$, endowed with the pointwise order  and the pointwise operations. The natural identification of $A$ in $FBL(A)$ is given by the map $\phi\colon A \longrightarrow FBL(A)$ defined by $\phi(a) = \delta_a$ for every $a \in A$. Since every function in $FBL(A)$ is a uniform limit of such functions, they are all continuous (in the product topology) and positively homogeneous (i.e. $f(\lambda x^*)=\lambda f(x^*)$ for every $x^* \in [-1,1]^A$ and every $0\leq \lambda\leq 1$).

Analogously, for the free Banach lattice $FBL[E]$, it is also possible to give an explicit description of it. For $x \in E$, let $\delta_x\colon E^* \longrightarrow \mathbb{R}$ be the {\it evaluation function} given by $\delta_x(x^*) = x^*(x)$ for every $x^* \in E^*$. For a function $f\colon E^\ast \To \mathbb{R}$, define the norm 
\begin{equation*} 
\norm{f}_{FBL[E]} = \sup \set{\sum_{i = 1}^n \abs{f(x_{i}^{*})} : n \in \mathbb{N}, \, x_1^{*}, \ldots, x_n^{*} \in E^{*},\text{ }\sup_{x \in B_E} \sum_{i=1}^n \abs{x_i^{*}(x)} \leq 1 }.
\end{equation*}

\noindent
Then, the free Banach lattice generated by $E$ is the Banach lattice generated by $\{\delta_x : x\in E\}$ inside the Banach lattice of the functions in $\mathbb{R}^{E^*}$ with finite norm $\| \cdot \|_{FBL[E]}$, endowed with the pointwise order and the pointwise operations. The natural identification of $E$ in $FBL[E]$ is given by the map $\phi\colon E \longrightarrow FBL[E]$ defined by $\phi(x) = \delta_x$ for every $x \in E$ (let us notice that it is a linear isometry between $E$ and its image in $FBL[E]$). Moreover, all the functions in $FBL[E]$ are positively homogeneous  (i.e. $f(\lambda x^*)=\lambda f(x^*)$ for every $x^* \in E^*$ and every $0\leq \lambda$) and $w^*$-continuous when restricted to the closed unit ball of  $B_{E^\ast}$ (see \cite[Lemma 4.10]{ART18}). 

\bigskip 
 
We will need the following definition.
 
\begin{defn}
	Let $E$ be a Banach space and $f \colon E^* \longrightarrow \R$ be a function in $FBL[E]$. We will say that \textit{$f$ depends on finitely many coordinates} if there exists a finite subset $E_0 \subseteq E$ such that $f(x^*) = f(y^*)$ whenever $x^*\vert_{E_0} = y^*\vert_{E_0}$.
\end{defn}

Let us notice that each $\delta_x$ depends only on one coordinate, namely the element $x$ itself. Since every function in $FBL[E]$ can be approximated by a finite lattice linear combination of $\delta_{x}$'s, we can highlight the following remark.

\begin{rem} \label{LemFunctionsDependFinitelyManyCoordinates} Every function in $FBL[E]$ can be approximated by a function which depends on finitely many coordinates. Consequently, every function in $FBL[E]$ depends on countably many coordinates.
\end{rem}

We will be working with the set of all lattice homomorphisms on a Banach lattice $X$ denoted by $\Hom(X,\R)$. Let us notice that this subset is {\it not} a linear subspace of $X^*$; indeed, in general, the sum of two lattice homomorphisms is no longer a lattice homomorphism, which gives us a big difference between the category of Banach lattices and lattice homomorphisms and the category of Banach spaces and linear functionals. Moreover, the set $\Hom(X,\R)$ is a $w^*$-closed subset of $X^*$. If $E$ is a normed space, we say that $x^* \in E^*$ {\it attains its norm} or it is {\it norm-attaining}, if there is $x_0 \in S_E$ such that $|x^*(x_0)| = \|x^*\| = \sup_{x \in S_E} |x^*(x)|$. We denote by $\NA(E, \R)$ the set of all norm-attaining functionals on $E^*$.

\bigskip

Let us finish this section by presenting some basic definitions and results on almost isometric ideals in Banach spaces. We will be using these tools intensively in Section \ref{SectionNegativeCases}. Let $E$ be a Banach space. A subspace $Z$ of $E$ is said to be an {\it almost isometric ideal} (\emph{ai-ideal}, for short) in $E$ if for each $\varepsilon>0$ and for each
finite-dimensional subspace $F\subseteq E$ there exists a linear
operator $T\colon F\longrightarrow Z$ satisfying
\begin{enumerate}
	\item\label{item:ai-1}
	$T(x)=x$ for each $x\in F\cap Z$, and
	\item\label{item:ai-2}
	$(1-\varepsilon) \Vert x \Vert \leq \Vert T(x)\Vert\leq
	(1+\varepsilon) \Vert x \Vert$
	for each $x\in F$.
\end{enumerate}
If $T$ satisfies only (\ref{item:ai-1}) and the right-hand side of
(\ref{item:ai-2}), we say that $Z$ is an \emph{ideal} in $E$ (see \cite{gks}). Let us notice that, in the context of almost isometric ideals, the Principle of Local Reflexivity means exactly that $E$ is an ai-ideal in $E^{**}$
for every Banach space $E$. We will need the following result.

\begin{thm}[\mbox{\cite[Theorem 1.4]{aln2}}]\label{theo:hbaioper}
	Let $E$ be a Banach space and $Z$ an almost isometric ideal in $E$. Then, there is a linear isometry $\varphi\colon  Z^*\longrightarrow E^*$ such that
	$$\varphi(z^*)(z)=z^*(z)$$
	holds for every $z\in Z$ and $z^*\in Z^*$ and satisfying that, for every $\varepsilon>0$, every finite-dimensional subspace $F_0$ of $E$ and every finite-dimensional subspace $F_1$ of $Z^*$, we can find an operator $T\colon F_0\longrightarrow Z$ satisfying
	\begin{enumerate}
		\item $T(x)=x$ for every $x\in F_0\cap Z$, 
		\item $(1-\varepsilon)\Vert x\Vert\leq \Vert T(x)\Vert\leq (1+\varepsilon)\Vert x\Vert$ for every $x\in F_0$, and
		\item $f(T(x))=\varphi(f)(x)$ for every $x\in F_0$ and every $f\in F_1$.
	\end{enumerate}
\end{thm}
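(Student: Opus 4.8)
Since the statement is quoted from \cite{aln2}, I only sketch the strategy I would follow. The plan is to first manufacture the extension operator $\varphi$ as a limit of the local operators guaranteed by the ai-ideal structure, and then to promote the approximate compatibility that this limit automatically carries into the \emph{exact} identities demanded in item (3), by a local reflexivity type perturbation. Concretely, I would index the ai-ideal data by the directed set of pairs $(F,\delta)$, with $F\subseteq E$ finite-dimensional and $\delta>0$, ordered by inclusion in the first coordinate and reverse order in the second, and for each such pair fix an operator $T_{(F,\delta)}\colon F\to Z$ satisfying the two defining conditions of an ai-ideal with parameter $\delta$. Fixing an ultrafilter $\mathcal U$ on this directed set refining the order filter, I would set
\[
\varphi(z^*)(x)=\lim_{\mathcal U} z^*\bigl(T_{(F,\delta)}(x)\bigr),
\]
the limit being over those pairs with $x\in F$. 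Boundedness of the net (by $\|z^*\|(1+\delta)\|x\|$) makes this well defined; linearity in $x$ and in $z^*$ is immediate; letting $\delta\to 0$ along $\mathcal U$ gives $\|\varphi(z^*)\|\le\|z^*\|$; and since $T_{(F,\delta)}$ eventually fixes any $z\in Z$, one gets $\varphi(z^*)(z)=z^*(z)$, whence $\varphi$ is an isometry, as its restriction to $Z$ recovers $z^*$. (Existence of such a $\varphi$ also follows from the general theory of ideals, but the ultrafilter form is precisely what makes the next step work.)

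\emph{Exact local compatibility.} Given $\varepsilon$, $F_0$ and $F_1$, I would choose a basis $x_1,\dots,x_m$ of $F_0$ adapted to $F_0\cap Z$ and a basis $f_1,\dots,f_n$ of $F_1$. By the very definition of $\varphi$ as an $\mathcal U$-limit, the set of pairs $(F,\delta)$ with $F\supseteq F_0$, $\delta$ small, and $|f_i(T_{(F,\delta)}(x_j))-\varphi(f_i)(x_j)|<\eta$ for all $i,j$ lies in $\mathcal U$ and is therefore nonempty; picking one such pair produces an operator $T_0\colon F_0\to Z$ satisfying (1) and (2), and matching (3) up to $\eta$. It then remains to correct $T_0$ to an operator $T$ meeting (3) on the nose.

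\emph{The perturbation, which I expect to be the main obstacle.} Writing $c_{ij}=\varphi(f_i)(x_j)-f_i(T_0(x_j))$, one has $|c_{ij}|<\eta$ and $c_{ij}=0$ whenever $x_j\in Z$. The decisive point is a consistency identity: any relation $\sum_i a_i f_i=0$ in $Z^*$ forces $\sum_i a_i c_{ij}=0$, because $\varphi$ is linear (so $\sum_i a_i\varphi(f_i)=0$) and $T_0(x_j)\in Z$. This is exactly what permits solving $f_i(w_j)=c_{ij}$ with vectors $w_j\in Z$ of norm $O(\eta)$, chosen to vanish when $x_j\in Z$. Setting $T=T_0+S$ with $S(x_j)=w_j$ then yields (3) exactly, preserves (1), keeps $T$ valued in $Z$, and—after taking $\eta$ small relative to $\delta$—keeps (2) within the prescribed $\varepsilon$. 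Turning the approximate matching of item (3) into genuine equalities without spoiling (1) and (2) is the delicate part, and it is precisely where the linearity and the extension property of $\varphi$ established in the first step are indispensable.
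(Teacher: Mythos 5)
The paper itself contains no proof of this statement --- it is imported verbatim from \cite[Theorem 1.4]{aln2} --- so your attempt can only be measured against the argument in that source, and your outline is indeed the standard one: define $\varphi$ as an ultrafilter limit of the local ai-ideal operators, note that the limit construction automatically yields \emph{approximate} compatibility on any finite configuration $(F_0,F_1)$, and then repair the error by a finite-rank perturbation with values in $Z$. The construction of $\varphi$, the verification that it is an isometric Hahn--Banach extension operator, the extraction of $T_0$ from a set in $\mathcal U$, and the observation that $c_{ij}=0$ whenever $x_j\in F_0\cap Z$ (so that the perturbation can be made to vanish on $F_0\cap Z$) are all correct.

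The one step that is misattributed --- and it is precisely the step you yourself flag as the crux --- is the claim that the consistency identity ``$\sum_i a_i f_i=0$ forces $\sum_i a_i c_{ij}=0$'' is what permits solving $f_i(w_j)=c_{ij}$ with $\|w_j\|=O(\eta)$. Since you chose $f_1,\dots,f_n$ to be a \emph{basis} of $F_1$, there are no nontrivial relations, so that identity is vacuous; mere solvability of the system already follows from linear independence of the $f_i$ over $Z$. What actually requires proof is the norm bound on the solutions, and for this you need a quantitative ingredient whose constants are fixed \emph{before} $\eta$ is chosen: either (i) the open mapping theorem applied to the surjection $w\mapsto(f_1(w),\dots,f_n(w))$ from $Z$ onto $\R^n$, which yields $r>0$ with $\bigl\{(f_i(w))_{i}: w\in B_Z\bigr\}\supseteq r B_{\R^n}$ and hence $\|w_j\|\le \eta/r$; or (ii) Helly's theorem combined with the estimate $\bigl|\sum_i a_i c_{ij}\bigr|\le \eta\sum_i|a_i|\le C\eta\,\bigl\|\sum_i a_i f_i\bigr\|_{Z^*}$, where $C$ is a norm-equivalence constant of the finite-dimensional space $F_1$. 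With either tool the constant multiplying $\eta$ depends only on the fixed bases of $F_0$ and $F_1$, so choosing $\delta$ and $\eta$ with $\delta+K\eta<\varepsilon$ at the end preserves (1) and (2) and achieves (3) exactly, as you describe. With this repair your proof is complete; as written, the decisive quantitative estimate is missing.
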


\noindent
Following the terminology of \cite{abrahamsen}, the isometry $\varphi$ is called an \textit{almost isometric Hahn-Banach extension operator}. Notice that if $\varphi\colon Z^*\longrightarrow E^*$ is an almost isometric Hahn-Banach extension operator, then $\varphi^*\colon E^{**}\longrightarrow Z^{**}$ is a norm-one projection (see e.g. \cite[Theorem 3.5]{kaltonlocomp}). Finally, we use the following theorem, whose proof follows the lines of the main theorem of \cite{SY}, in the proof of Theorem \ref{falloconjec}.

\begin{thm}[\mbox{\cite[Theorem 1.5]{abrahamsen}}]\label{theo:exteaiideales}
	Let $E$ be a Banach space, $Y \subseteq E$ a separable subspace of $E$, and $W\subseteq E^*$ a separable subspace of $E^*$. Then there exist a separable almost isometric ideal $Z$ in $E$ containing $Y$ and an almost isometric Hahn-Banach extension operator $\varphi\colon Z^*\longrightarrow E^*$ such that $\varphi(Z^*)\supset W$.
\end{thm}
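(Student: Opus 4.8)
The plan is to prove the statement by a countable back-and-forth construction in the spirit of \cite{SY}, producing $Z$ as the closure of an increasing union of separable subspaces of $E$ and building the extension operator $\varphi$ simultaneously. The essential point is that separability of both $Y$ and $W$ reduces the a priori uncountably many requirements — that $Z$ be an almost isometric ideal containing $Y$, and that $\varphi(Z^*)$ contain $W$ — to countably many finite-dimensional tasks, which can be met one at a time along a single sequence of stages. Throughout I would use the identification, valid for any almost isometric Hahn-Banach extension operator $\varphi$, that $P:=\varphi\circ R$, where $R\colon E^*\to Z^*$ is the restriction $e^*\mapsto e^*|_Z$, is a norm-one projection on $E^*$ with $\ker P=Z^\perp$ and range $\varphi(Z^*)$; thus $W\subseteq\varphi(Z^*)$ is equivalent to $P$ fixing $W$, and this is the property I would engineer on the functional side.

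First I would set up the two families, starting with separable subspaces $Z_0:=Y\subseteq E$ and $W_0:=W\subseteq E^*$, and build increasing chains $Z_0\subseteq Z_1\subseteq\cdots\subseteq E$ and $W_0\subseteq W_1\subseteq\cdots\subseteq E^*$ of separable subspaces, alternating two kinds of stages. At an \emph{ideal stage} I would run through a countable family of finite-dimensional subspaces $F\subseteq E$ (chosen densely from the vectors accumulated so far together with a relevant dense subset of $E$) and, for each such $F$ and each $\varepsilon=1/k$, invoke the local structure of $E$ — concretely the Principle of Local Reflexivity, which by the observation recorded before Theorem~\ref{theo:hbaioper} says exactly that $E$ is an almost isometric ideal in $E^{**}$ — to produce a nearly isometric operator $T\colon F\to E$ fixing $F\cap Z_n$, enlarging $Z_{n+1}$ so as to contain $T(F)$. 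At a \emph{functional stage} I would run through a countable dense subset of $W_n$ and, for each $w$, adjoin to $Z_{n+1}$ finitely many near-norming vectors for $w$ (so that $\|w|_Z\|=\|w\|$ in the limit) and record the corresponding local extension data, the aim being that in the limit the projection $P$ reproduces $w$.

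Finally I would set $Z:=\overline{\bigcup_n Z_n}$, which is separable and contains $Y$, and pass the accumulated local operators and extension data to a limit. That $Z$ is an almost isometric ideal in $E$ would follow by verifying the definition for an arbitrary finite-dimensional $F\subseteq E$: approximate $F$ by one of the subspaces treated at an ideal stage and use a perturbation argument together with the operators $T$, whose ranges now lie in $Z$ by construction. Applying Theorem~\ref{theo:hbaioper} (or reassembling the recorded local data) yields the almost isometric Hahn-Banach extension operator $\varphi\colon Z^*\to E^*$, and the functional stages ensure that $P=\varphi R$ fixes a dense subset of $W$, hence all of $W$ by the norm-one-ness of $P$ and continuity; therefore $W\subseteq\varphi(Z^*)$.

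The main obstacle I anticipate is the \emph{simultaneous} control of the two sides: the range of an almost isometric Hahn-Banach extension operator cannot be prescribed freely, so one must interleave the norming and extension requirements coming from $W$ with the almost-isometric moving requirements coming from $E$, and check that they never conflict and that each survives the passage to the limit. Forcing a prescribed $w\in W$ to lie in $\varphi(Z^*)$ is subtler than merely arranging $\|w|_Z\|=\|w\|$, since two norm-preserving extensions of $w|_Z$ need not coincide; hence the construction must carry enough extension data to pin down $w$ as the actual image $\varphi(w|_Z)$ in the limit, and keeping this compatible with the ai-ideal operators while staying within separable spaces is the delicate heart of the argument. A secondary technical point is the reduction, via density and perturbation, of the ai-ideal property for \emph{all} finite-dimensional $F\subseteq E$ to the countably many cases actually handled.
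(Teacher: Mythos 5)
The paper itself does not prove this statement: it is imported verbatim from \cite[Theorem 1.5]{abrahamsen}, with the remark that its proof follows the lines of the main theorem of \cite{SY}. Your skeleton (increasing chain of separable subspaces, interleaved requirements, limit operator) is indeed that Sims--Yost scheme, but the two mechanisms you actually specify would fail, and they fail exactly where the theorem has content. Note first that the statement is trivial when $E$ is separable (take $Z=E$, $\varphi=\mathrm{id}_{E^*}$), so the only relevant case is $E$ nonseparable. Then your ideal stage collapses: there is no countable family of finite-dimensional subspaces ``chosen densely from \dots\ a relevant dense subset of $E$'', so the reduction of the ai-ideal condition to countably many $F$'s plus perturbation cannot even begin. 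Moreover, the Principle of Local Reflexivity is of no use here: it says $E$ is an ai-ideal in $E^{**}$, so for $F\subseteq E$ it only yields maps fixing $F\cap E=F$, i.e.\ the identity; and since you allow yourself to enlarge $Z_{n+1}$ to contain $T(F)$, the identity always satisfies your stage requirement, which shows that the stage as described imposes no constraint at all --- and indiscriminately adjoining countably many finite-dimensional subspaces does not produce an ai-ideal. What the Sims--Yost-type argument actually uses is a compactness lemma: given finitely many functionals $g_1,\dots,g_m\in E^*$ and $\varepsilon>0$, there is a \emph{single} finite-dimensional $F_1\subseteq E$ containing, for \emph{every} $x\in E$, a companion $y\in F_1$ with $g_i(y)=g_i(x)$ for all $i$ and $(1-\varepsilon)\|x\|\leq\|y\|\leq(1+\varepsilon)\|x\|$. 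This is possible despite nonseparability because the constraints factor through the finite-dimensional quotient $E/\bigcap_i\ker g_i$ together with a norm parameter, so an $\varepsilon$-net argument applies; adjoining such $F_1$'s is what makes the limit space $Z$ rich enough that near-isometric operators $T\colon F\longrightarrow Z$ fixing $F\cap Z$ exist for \emph{all} finite-dimensional $F\subseteq E$, the verification being run against the countably many recorded functionals rather than against a (nonexistent) countable dense subset of $E$.

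The second gap is the one you flag yourself but leave open, and it is resolved by the same device rather than by ``near-norming vectors''. One keeps a dense sequence $(w_j)$ of $W$ among the recorded functionals $g_i$ at every stage, so that all local operators produced by the lemma satisfy $w_j(Tx)=w_j(x)$ on their domains; when $\varphi$ is assembled as a $w^*$-limit of this local data (as in item (3) of Theorem \ref{theo:hbaioper}), the identity passes to the limit and gives $\varphi(w_j|_Z)(x)=w_j(Tx)=w_j(x)$, i.e.\ $P=\varphi\circ R$ fixes each $w_j$, hence all of $W$ by continuity --- that last step of your outline, like your observation that $W\subseteq\varphi(Z^*)$ is equivalent to $P$ fixing $W$, is correct. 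So your proposal matches the cited proof in outline, but the missing ingredient in both of your stages is the same concrete thing: the finite-codimension compactness lemma of \cite{SY}, without which neither the ai-ideal property of $Z$ nor the containment $W\subseteq\varphi(Z^*)$ survives the passage to a nonseparable ambient space.
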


\section{The structure of $\Hom(X, \R)$ and disjoint families}\label{SectionStructureHom}

We start this section by giving some structural results on the set $\Hom(X, \R)$, where $X$ is a Banach lattice. In particular, we focus on the relation between lattice homomorphisms and disjoint families. This relation will appear in a natural way through the concept of atoms.
In particular, we will have that linearly independent lattice homomorphisms are disjoint. One of the consequences of this fact will be the failure of the lattice analogous of Bishop-Phelps theorem (see Theorem \ref{TheoremNoBishop-Phelps} on Section \ref{SectionNegativeCases}).

\bigskip

For an element $x^*$ in the Banach lattice $X^*$, it is worth mentioning, although straightforward, that in general we have that $x^*(x \vee y) \not= x^*(x) \vee x^*(y)$. For example, on the Banach lattice $c_0$ with its natural order structure, we have that
\begin{equation*}
	(e_1^* + e_2^*)(e_1 \vee e_2) = 2 \not= 1 = (e_1^* + e_2^*)(e_1) \vee (e_1^* + e_2^*)(e_2),
\end{equation*}
where $(e_i, e_i^*)$ is the biorthogonal system of $c_0$. Analogously, it is possible to show that $x^*(x \wedge  y) = x^*(x) \wedge  x^*(y)$ does not hold in general. Indeed, it follows from the Riesz-Kantorovich formulae that
$$  x^*(x \wedge  y)=\inf\{y^*(x)+(x^*-y^*)(y): 0 \leq y^* \leq x^*\}$$
and
$$  x^*(x \vee  y)=\sup\{y^*(x)+(x^*-y^*)(y): 0 \leq y^* \leq x^*\},$$
whenever $x^*$ is a positive element in $X^*$.

As we have already mentioned in the previous section, we will be interested in the set $\Hom(X, \R)$, where the identities $x^*(x \vee y) = x^*(x) \vee x^*(y)$ and $x^*(x \wedge y) = x^*(x) \wedge x^*(y)$ hold to be true. We refer the reader to \cite[Section 1.3]{Meyer} for a detailed background on lattice homomorphisms.

It is clear that any lattice homomorphism $x^*$ on a Banach lattice $X$ is positive (i.e. $x^*(x) \geq 0$ for every positive $x \in X$). In fact, an element $x^* \in X^*$ with $x^*>0$ (i.e. $x^*$ is positive and $x^* \neq 0$) is a lattice homomorphism if and only if $x^*$ is an atom in $X^*$ (see \cite[Section 2.3, Exercise 6]{AA}). Recall that an element $x>0$ in a Banach lattice $X$ is an \textit{atom} if and only if $x\geq u \geq 0$ implies that $u=ax$ for some scalar $a\geq 0$. Due to this characterization, we have the following result, which will be used in Proposition \ref{theorem:disjointfamily1} later on. Let us recall that $x$ and $y$ are said to be {\it disjoint} whenever $|x| \wedge |y| = 0$.
 
\begin{lem}
\label{LemmDisjointOrDependent}
Let $X$ be a Banach lattice and $x^*,y^* \in \Hom(X,\R)$. Then, $x^*$ and $y^*$ are either linearly dependent or disjoint.
\end{lem}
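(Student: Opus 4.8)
The plan is to exploit the characterization, recalled just above the statement, that a strictly positive functional on $X^*$ is a lattice homomorphism precisely when it is an atom of the Banach lattice $X^*$. First I would dispose of the degenerate case: if either $x^*$ or $y^*$ equals $0$, then the pair is trivially linearly dependent and there is nothing to prove. So from now on I would assume that both $x^*>0$ and $y^*>0$, and hence that both are atoms in $X^*$. Since lattice homomorphisms are positive we have $\abs{x^*}=x^*$ and $\abs{y^*}=y^*$, so the disjointness condition $\abs{x^*}\wedge\abs{y^*}=0$ is simply $x^*\wedge y^*=0$; accordingly I would work directly with the infimum $z:=x^*\wedge y^*\geq 0$ taken in $X^*$.

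The dichotomy then comes from deciding whether $z$ vanishes. If $z=0$, the two functionals are disjoint and we are done. If instead $z>0$, I would use that $0\leq z\leq x^*$ together with the fact that $x^*$ is an atom to conclude that $z=a\,x^*$ for some scalar $a\geq 0$; because $z\neq 0$, necessarily $a>0$. Symmetrically, from $0\leq z\leq y^*$ and the atom property of $y^*$ I would obtain $z=b\,y^*$ with $b>0$. Combining these gives $a\,x^*=b\,y^*$, that is $x^*=(b/a)\,y^*$, so that $x^*$ and $y^*$ are linearly dependent. These two cases are exhaustive and together yield the statement.

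I do not anticipate a serious obstacle here; the only points that warrant care are making sure the atom characterization is being applied inside the correct object, namely the Banach lattice $X^*$ (which is where $x^*$, $y^*$, and their lattice infimum $z$ all live), and checking that the strict positivity of $z$ in the second case genuinely forces the scalars $a,b$ to be strictly positive rather than merely nonnegative, so that the relation $x^*=(b/a)\,y^*$ makes sense. Everything else is an immediate consequence of the defining property of atoms recalled before the statement.
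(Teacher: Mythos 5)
Your proposal is correct and follows essentially the same route as the paper's own proof: both reduce to the case of nonzero (hence atomic) functionals, form the infimum $z^* = x^* \wedge y^*$, and apply the atom property of each functional to write $z^*$ as a nonnegative multiple of both $x^*$ and $y^*$, from which the dichotomy follows. Your version merely makes explicit the case split on whether $z^*$ vanishes and the strict positivity of the scalars, details the paper leaves to the reader in its closing phrase ``and the conclusion follows.''
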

\begin{proof}
Since $x^*$ and $y^*$ are lattice homomorphisms, we have $x^* ,y^*\geq 0$. We can suppose that $x^*$ and $y^*$ are nonnull. Set $z^*=|x^*|\wedge|y^*|=x^*\wedge y^*$. Since $0 \leq z^* \leq x^*$ and $x^*$ is an atom, we have that there exists $a_1 \in \R$ such that $z^* = a_1 x^*$. Analogously, there exists $a_2 \in \R$ such that $z^* = a_2 y^*$. Thus, $|x^*|\wedge|y^*|=a_1 x^* = a_2 y^*$ and the conclusion follows.
\end{proof}

\begin{cor}
\label{CorDistanceHomomorphisms}	
Let $X$ be a Banach lattice and $x^*,y^*\in \Hom(X,\R)$ be linearly independent. Then $\|x^*-y^*\| \geq \max\{\|x^*\|,\|y^*\|\}$.
\end{cor}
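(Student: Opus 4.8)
The plan is to combine the dichotomy provided by Lemma \ref{LemmDisjointOrDependent} with the elementary lattice identities governing disjoint elements and the monotonicity of the lattice norm.

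First I would use the hypothesis of linear independence to rule out the dependent alternative in Lemma \ref{LemmDisjointOrDependent}, so that $x^*$ and $y^*$ are necessarily disjoint, i.e. $\abs{x^*} \wedge \abs{y^*} = 0$. Since both functionals are lattice homomorphisms, they are positive, and hence this reads $x^* \wedge y^* = 0$, with $\abs{x^*} = x^*$ and $\abs{y^*} = y^*$.

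Next I would invoke the standard fact that disjoint elements $a,b$ of a Banach lattice satisfy $\abs{a - b} = \abs{a} + \abs{b} = \abs{a} \vee \abs{b}$; the first equality is the usual disjointness identity, and the second follows from the general relation $\abs{a} + \abs{b} = (\abs{a} \vee \abs{b}) + (\abs{a} \wedge \abs{b})$ together with the vanishing of the infimum. Applying this to $a = x^*$, $b = y^*$ and using positivity yields $\abs{x^* - y^*} = x^* \vee y^*$.

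Finally, recalling that the norm on a Banach lattice is absolute (so $\norm{z} = \norm{\abs{z}}$) and monotone, I would conclude $\norm{x^* - y^*} = \norm{x^* \vee y^*}$, and then, since $x^* \vee y^* \geq x^* \geq 0$ and $x^* \vee y^* \geq y^* \geq 0$, monotonicity gives $\norm{x^* \vee y^*} \geq \max\{\norm{x^*}, \norm{y^*}\}$, which is exactly the asserted inequality. The argument is essentially routine: the only points requiring care are the correct reading of the disjoint case of the lemma and the disjointness identity $\abs{a-b} = \abs{a} \vee \abs{b}$, and I do not expect any genuine obstacle beyond these standard facts.
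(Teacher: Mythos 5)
Your proof is correct and follows essentially the same route as the paper: both apply Lemma \ref{LemmDisjointOrDependent} to get disjointness, identify $|x^*-y^*|$ explicitly (your $x^*\vee y^*$ coincides with the paper's $x^*+y^*$, since disjointness gives $x^*+y^*=x^*\vee y^*+x^*\wedge y^*=x^*\vee y^*$), and conclude by monotonicity of the lattice norm. No gaps.
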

\begin{proof}
By the previous lemma, both elements are disjoint (and positive). In particular, the positive part of $x^*-y^*$ is $x^*$ and the negative part is $y^*$. Thus, we have that $|x^*-y^*|=x^*+y^*$ and, therefore,
$$ \|x^*-y^*\| = \| |x^*-y^*|\|= \| x^*+y^* \| \geq \max\{\|x^*\|,\|y^*\|\},$$
where in the last inequality we have used that both $x^*$ and $y^*$ are smaller than $x^*+y^*$. 
\end{proof}
%We have the following consequence of Lemma \ref{LemmDisjointOrDependent}, which says that not only the sum of two homomorphisms does not have to be a homomorphism, but also it never contains a 2-dimensional subspace. Recall that if $X$ is a topological vector space and $M$ is a subset of $X$, then we say that $M$ is {\it spaceable} whenever $M \cup \{0\}$ contains a subspace of infinite dimension. 
%\begin{rem} \label{spaceable} Let $X$ be a Banach lattice. The set $\Hom(X,\R)$ does not contain any subspace of dimension greater than or equal to $1$. In particular, $\Hom(X,\R)$ is never spaceable.
%\end{rem}

\bigskip

For classical Banach lattices $X$ with their usual order and norm, we have that $\Hom(X, \R)$ is very small as described in Example \ref{ExampleSetHomClassical}. In what follows, $\delta_x \colon C(K) \longrightarrow \R$ is the evaluation function on $x$ in a $C(K)$-space. Item (i) can be easily computed using the equivalence between nonnull lattice homomorphisms and atoms, item (ii) is proved in \cite[Lemma 4.23]{AA}, and item (iii) follows from the fact that the dual of every atomless Banach lattice with order continuous norm is also atomless (see \cite[Lemma 2.31]{AA}). 

\begin{example}\label{ExampleSetHomClassical} Let $X$ be $c_0$ or $\ell_p$, $K$ a compact Hausdorff topological space, and $1 \leq p < \infty$. 
\begin{itemize}
	\item[(i)] $\Hom(X, \R) = \{ \lambda e_n^*: \lambda \geq 0, n \in \N \}$.
	\vspace{0.1cm}
	\item[(ii)] $\Hom(C(K), \R) = \{ \lambda \delta_x: \lambda \geq 0, x \in K \}$.
	\vspace{0.1cm}
	\item[(iii)] $\Hom(L_p[0,1], \R) = \{0\}$.
\end{itemize}
\end{example}

 Let $K$ be a compact Hausdorff topological space and $X$ a Banach lattice. The Banach space of all continuous functions from $K$ into $X$, denoted by $C(K,X)$, is a Banach lattice when endowed with the pointwise order, that is, $f \leq g$ if and only if $f(x) \leq g(x)$ for every $x \in K$. These spaces play an important role in the theory of Banach lattices. Notice that, although every separable Banach space embeds into $C([0,1])$, this result is no longer true when we restrict to the class of separable Banach lattices and lattice embeddings. Instead, it was proved in \cite{LLOT} that the Banach lattice $C(\Delta, L_1[0,1])$ is injectively universal for the class of separable Banach lattices, i.e.~any separable Banach lattice embeds lattice isometrically into $C(\Delta, L_1[0,1])$, where  $\Delta$ is the Cantor set. The following lemma is a consequence of \cite[Theorem 2.2]{CRX}.
 
 \begin{lem} \label{lemma:HomomC(K,X)}
 	Let $X$ be a Banach lattice and $K$ a compact Hausdorff topological space. Then, for every nonnull $\varphi\in\Hom(C(K,X),\R)$, there exist a unique $a \in K$ and $x^* \in \Hom(X,\R)$ such that $\varphi(f)=x^*(f(a))$ for every $f\in C(K,X)$.
 \end{lem}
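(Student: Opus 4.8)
The plan is to show that a nonnull lattice homomorphism $\varphi$ on $C(K,X)$ is \emph{localized} at a single point of $K$, in complete analogy with the scalar case (item (ii) of Example \ref{ExampleSetHomClassical}), and then to read off the functional $x^*$ from the action of $\varphi$ on constant functions. The whole argument rests on one elementary observation: if $g_1,g_2\in C(K)$ are nonnegative scalar functions with disjoint supports and $f\geq 0$ in $C(K,X)$, then $g_1 f$ and $g_2 f$ are disjoint in $C(K,X)$ (the lattice operations of $C(K,X)$ being computed pointwise), so that $\varphi(g_1 f)\wedge\varphi(g_2 f)=\varphi\big((g_1 f)\wedge(g_2 f)\big)=0$; since $\varphi\geq 0$, at least one of the two values vanishes. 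This is the mechanism that forces $\varphi$ to concentrate at a point.

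First I would introduce the support of $\varphi$: say that $a\in K$ belongs to $\supp\varphi$ if for every neighbourhood $U$ of $a$ there is some $f\geq 0$, vanishing outside $U$, with $\varphi(f)>0$. A partition-of-unity argument shows $\supp\varphi\neq\emptyset$: if every point had a neighbourhood on which $\varphi$ vanishes (on functions supported there), a finite subcover together with a subordinate partition of unity $\{\rho_i\}$ would give $\varphi(f)=\sum_i\varphi(\rho_i f)=0$ for every $f\geq 0$, forcing $\varphi=0$. Next, the disjointness observation shows $\supp\varphi$ is a \emph{singleton} $\{a\}$: two distinct points $a\neq b$ admit disjoint open neighbourhoods $U,V$ (here $K$ is Hausdorff), and functions witnessing the support that vanish outside $U$ and outside $V$ respectively are disjoint, contradicting that both have strictly positive image under $\varphi$.

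With $\supp\varphi=\{a\}$ in hand, I would prove the localization $\varphi(f)=0$ whenever $f(a)=0$. A second partition-of-unity argument (covering the compact set $K\setminus W$, for $W$ a neighbourhood of $a$, by sets on which $\varphi$ vanishes) gives $\varphi(f)=0$ for every $f$ vanishing in a neighbourhood of $a$; a routine approximation then upgrades this to all $f$ with $f(a)=0$, using that such $f$ can be multiplied by a suitable Urysohn function equal to $0$ near $a$ and to $1$ away from a small neighbourhood, at a cost controlled by $\sup_{t\in W}\|f(t)\|$. Defining $x^*\colon X\to\R$ by $x^*(v)=\varphi(\widehat{v})$, where $\widehat{v}$ denotes the constant function with value $v$, the identity $\varphi(f)=x^*(f(a))$ follows by applying the localization to $f-\widehat{f(a)}$. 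Since constant functions satisfy $\widehat{u\vee v}=\widehat{u}\vee\widehat{v}$ and $\widehat{u\wedge v}=\widehat{u}\wedge\widehat{v}$ pointwise, and $\varphi$ is a lattice homomorphism, $x^*$ inherits the preservation of $\vee$ and $\wedge$; its boundedness and nonnullity are immediate, so $x^*\in\Hom(X,\R)$. Uniqueness of $a$ is seen by separating two candidate points $a\neq b$ with a scalar Urysohn function $g$ (with $g(a)=1$, $g(b)=0$) and a positive $v$ satisfying $x^*(v)>0$: the function $g\,\widehat{v}$ distinguishes the two representations, and $x^*$ is then forced by $x^*(v)=\varphi(\widehat{v})$.

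I expect the main obstacle to be the localization step, namely passing from ``$\varphi$ annihilates functions vanishing on a neighbourhood of $a$'' to ``$\varphi$ annihilates all $f$ with $f(a)=0$''. This is where the vector-valued nature of $C(K,X)$ genuinely enters: one must approximate in the $C(K,X)$-norm while respecting positivity, and the bookkeeping with partitions of unity subordinate to open covers of $K\setminus W$ has to be carried out carefully. The remainder is a faithful transcription of the scalar computation underlying Example \ref{ExampleSetHomClassical}(ii).
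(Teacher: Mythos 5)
Your proof is correct, but it follows a genuinely different route from the paper's. The paper's proof is very short: it invokes the lattice-valued Banach--Stone theorem of Cao, Reilly and Xiong \cite[Theorem 2.2]{CRX} to obtain directly the existence of a unique $a\in K$ with $\varphi(f)=\varphi(1\otimes f(a))$ for all $f\in C(K,X)$, and then, exactly as you do, reads off $x^*$ from the action of $\varphi$ on the sublattice of constant functions. What you have done is to reprove, in the special real-valued case needed here, the localization result that the paper outsources to \cite{CRX}: your support-of-$\varphi$ construction, the partition-of-unity argument showing the support is nonempty, the disjointness argument (two positive functions vanishing outside disjoint open sets are disjoint, so a positive lattice homomorphism kills one of them) showing it is a singleton, and the Urysohn-function approximation upgrading ``$\varphi$ annihilates functions vanishing near $a$'' to ``$\varphi$ annihilates functions vanishing at $a$'' are all sound; note that this last step only needs boundedness of $\varphi$, not positivity, since $\|f-gf\|_{C(K,X)}\leq\sup_{t\in W}\|f(t)\|$. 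Your uniqueness argument (separating two candidate points by $g\,\widehat{v}$ with $g$ a Urysohn function and $x^*(v)>0$) likewise replaces the uniqueness assertion that the paper gets for free from the citation. The trade-off is clear: your argument is self-contained and elementary, at the cost of length; the paper's is a two-line reduction to a known theorem. Both yield the same construction of $x^*$ and the same conclusion.
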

 \begin{proof}
 	It follows from \cite[Theorem 2.2]{CRX} that there exists a unique $a\in K$ such that $\varphi(f)=\varphi(1 \otimes f(a))$ for every function $f\in C(K,X)$, where $1 \otimes f(a)$ denotes the constant function equal to $f(a)$. Notice that $\hat{X}=\{1 \otimes x \in  C(K,X) : x\in X\}$ is a sublattice isometric to $X$. Indeed, $i\colon X \longrightarrow \hat{X}$ defined by $i(x)=1 \otimes x$ is a lattice isometry. 
 	Thus,  $x^*:=\varphi|_{\hat{X}}\circ i \in \Hom(X, \R)$ and
 	$$\varphi(f)=\varphi(1 \otimes f(a))=\varphi(i(f(a)))=x^*(f(a))$$
 	for every  $f\in C(K,X)$.
 \end{proof}

 The next corollary shows the drastic failure of the lattice version of the Hahn-Banach Theorem; every separable Banach lattice $X$ can be embedded into a Banach lattice $Y$ in which no nonnull lattice homomorphism in $\Hom(X, \R)$ can be extended to a lattice homomorphism in $\Hom(Y, \R)$:
 
 \begin{cor} \label{Corhomozero} Let $X$ be a Banach lattice such that $\Hom(X, \R)=\{0\}$. Then, 
 \begin{equation*} 	
 \Hom(C(K,X),\R)=\{0\}.  
 \end{equation*} 
 In particular, every separable Banach lattice embeds lattice isometrically into a Banach lattice on which there are no nontrivial homomorphisms.
 \end{cor}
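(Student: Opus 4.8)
The plan is to prove the two assertions of Corollary~\ref{Corhomozero} in turn, with the first being the essential content and the second a routine application of the universality result already quoted.

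For the first assertion, suppose $\Hom(X,\R)=\{0\}$ and let $\varphi\in\Hom(C(K,X),\R)$ be arbitrary. First I would observe that we may assume $\varphi$ is nonnull, since otherwise there is nothing to prove. By Lemma~\ref{lemma:HomomC(K,X)}, a nonnull $\varphi$ factors as $\varphi(f)=x^*(f(a))$ for some point $a\in K$ and some $x^*\in\Hom(X,\R)$. But by hypothesis $\Hom(X,\R)=\{0\}$, forcing $x^*=0$, and hence $\varphi(f)=0$ for every $f\in C(K,X)$. This contradicts the assumption that $\varphi$ is nonnull. Therefore the only element of $\Hom(C(K,X),\R)$ is $0$, which is exactly the claim $\Hom(C(K,X),\R)=\{0\}$.

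For the second assertion, I would combine the injective universality of $C(\Delta,L_1[0,1])$ with the preceding computation. Let $Y$ be an arbitrary separable Banach lattice. By the result of \cite{LLOT} mentioned just before Lemma~\ref{lemma:HomomC(K,X)}, $Y$ embeds lattice isometrically into $Z:=C(\Delta,L_1[0,1])$, where $\Delta$ is the Cantor set. Now the point is to recognize $Z$ as a space of the form $C(K,X)$ with $\Hom(X,\R)=\{0\}$: here $K=\Delta$ is compact Hausdorff and $X=L_1[0,1]$. By Example~\ref{ExampleSetHomClassical}(iii) we have $\Hom(L_1[0,1],\R)=\{0\}$. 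Applying the first part of the corollary with this $X$ and $K$ yields $\Hom(C(\Delta,L_1[0,1]),\R)=\{0\}$. Thus every separable Banach lattice $Y$ embeds lattice isometrically into the Banach lattice $Z$, which carries no nontrivial lattice homomorphisms, as desired.

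I expect no genuine obstacle here: the argument is a direct chaining of three results already established in the excerpt, namely the factorization Lemma~\ref{lemma:HomomC(K,X)}, the vanishing of $\Hom(L_1[0,1],\R)$ from Example~\ref{ExampleSetHomClassical}(iii), and the universality of $C(\Delta,L_1[0,1])$. The only point deserving a moment of care is verifying that the lattice isometric embedding $Y\hookrightarrow Z$ and the hypothesis $\Hom(X,\R)=\{0\}$ interact correctly, but since the embedding is into the whole of $Z=C(\Delta,L_1[0,1])$ rather than into a sublattice on which homomorphisms might behave differently, and since $\Hom(Z,\R)=\{0\}$ kills every homomorphism on $Z$ regardless of the sublattice $Y$, the conclusion follows immediately.
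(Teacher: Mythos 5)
Your proof is correct and follows essentially the same route as the paper: the first assertion is exactly the factorization via Lemma~\ref{lemma:HomomC(K,X)}, and the second combines $\Hom(L_1[0,1],\R)=\{0\}$ with the universality of $C(\Delta,L_1[0,1])$ from \cite{LLOT}, which is precisely the paper's argument (stated there in one line).
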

 \begin{proof}
 	The first part follows from the previous lemma, whereas the second part follows from the fact that $\Hom(L_1[0,1],\R)=\{0\}$ and \cite[Theorem 1.1]{LLOT}.
 \end{proof}

\bigskip

We finish this section considering free Banach lattices. Let $E$ be a Banach space. For $x^* \in E^*$, we denote by $\delta_{x^*} \colon FBL[E] \longrightarrow \R$ the {\it evaluation function} on $FBL[E]$ given by $\delta_{x^*}(f) = f(x^*)$ for every $f \in FBL[E]$. Analogously, if $A$ is a non-empty set and $x^* \in [-1,1]^A$, $\delta_{x^*} \colon FBL(A) \longrightarrow \R$ is the evaluation function on $FBL(A)$. 

\begin{prop} \label{PropSetHomFBL} Let $E$ be a Banach space and $A$ be a non-empty set.
\begin{itemize}
	\item[(i)] $\Hom(FBL(A), \R) = \{ \lambda\delta_{x^*}: \lambda \geq 0, x^* \in [-1,1]^A\}$ (see \cite[Theorem 5.5]{dPW15}).
		\item[(ii)] $\Hom(FBL[E], \R) = \{ \delta_{x^*}: x^* \in E^* \}$ (see \cite[Corollary 2.6]{ART18}).
\end{itemize}
\end{prop}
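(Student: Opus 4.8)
The plan is to deduce both items from the universal property of the free Banach lattices, treating (ii) as the primary statement and obtaining (i) from it. The central observation is that, specialised to the target lattice $X=\R$, the universal property sets up a bijection between bounded operators $E\to\R$ (that is, elements of $E^*$) and lattice homomorphisms $FBL[E]\to\R$ (that is, elements of $\Hom(FBL[E],\R)$), and that under this bijection the operator $x^*\in E^*$ corresponds precisely to the evaluation functional $\delta_{x^*}$.

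First I would check the easy inclusion ``$\supseteq$'': for each $x^*\in E^*$ the evaluation $\delta_{x^*}(f)=f(x^*)$ is a lattice homomorphism. Linearity is immediate, and since the lattice operations on $FBL[E]$ are computed pointwise one has $\delta_{x^*}(f\vee g)=(f\vee g)(x^*)=f(x^*)\vee g(x^*)=\delta_{x^*}(f)\vee\delta_{x^*}(g)$, and likewise for $\wedge$. Boundedness follows from the defining formula for $\|\cdot\|_{FBL[E]}$ by taking $n=1$ and $x_1^*=x^*/\|x^*\|$, which is an admissible choice since $\sup_{x\in B_E}|x_1^*(x)|=\|x_1^*\|=1$; using positive homogeneity of $f$ this yields $|\delta_{x^*}(f)|=|f(x^*)|\leq\|x^*\|\,\|f\|_{FBL[E]}$, so $\delta_{x^*}\in\Hom(FBL[E],\R)$. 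I would also record that $\delta_{x^*}\circ\phi=x^*$, since $\delta_{x^*}(\phi(x))=\delta_{x^*}(\delta_x)=\delta_x(x^*)=x^*(x)$.

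For the reverse inclusion ``$\subseteq$'', the key step is the uniqueness clause of the universal property. Given an arbitrary $\varphi\in\Hom(FBL[E],\R)$, I would set $T:=\varphi\circ\phi$. As a composition of the bounded operator $\phi$ and the bounded functional $\varphi$, this $T$ is a bounded operator $E\to\R$, i.e. $T=x^*$ for some $x^*\in E^*$. Now both $\varphi$ and $\delta_{x^*}$ are lattice homomorphisms from $FBL[E]$ into $\R$ whose composition with $\phi$ equals $T$ (for $\delta_{x^*}$ this was verified above). Since $FBL[E]$ is the closed sublattice generated by $\phi(E)$, two lattice homomorphisms agreeing on $\phi(E)$ must agree on all of $FBL[E]$; equivalently, this is exactly the uniqueness asserted by the universal property applied to $T$. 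Hence $\varphi=\delta_{x^*}$, completing (ii).

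Finally I would derive (i) from (ii) through the identification $FBL(A)\cong FBL[\ell_1(A)]$ (Corollary 2.8 in \cite{ART18}), a lattice isometry sending each generator $\delta_a$ to $\delta_{e_a}$. Under this identification (ii) gives $\Hom(FBL(A),\R)=\{\delta_{x^*}:x^*\in\ell_1(A)^*\}$, and since $\ell_1(A)^*=\ell_\infty(A)$ has unit ball $[-1,1]^A$, every $x^*\in\ell_\infty(A)$ can be written as $x^*=\lambda y^*$ with $\lambda=\|x^*\|_\infty\geq 0$ and $y^*\in[-1,1]^A$. Positive homogeneity of the elements of the free lattice then gives $\delta_{\lambda y^*}=\lambda\delta_{y^*}$, so the set of homomorphisms is exactly $\{\lambda\delta_{y^*}:\lambda\geq 0,\ y^*\in[-1,1]^A\}$, which is (i). I expect no deep obstacle here: the argument is essentially a direct application of the universal property, and the only care needed is in the boundedness estimate for $\delta_{x^*}$ and in matching the domain $[-1,1]^A$ of the evaluations in $FBL(A)$ with the unit ball of $\ell_\infty(A)=\ell_1(A)^*$ under the identification above, where the scaling by $\lambda$ and positive homogeneity must be tracked carefully.
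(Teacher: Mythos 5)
Your proof is correct, but note that the paper does not actually prove this proposition: both items are stated as quotations from the literature, (i) from \cite[Theorem 5.5]{dPW15} and (ii) from \cite[Corollary 2.6]{ART18}, so any genuine argument is by definition a different route. What you give is a clean, self-contained reconstruction of the cited results from the universal property together with the function-space description of $FBL[E]$ in Section \ref{section:notation}: the inclusion $\supseteq$ uses that the lattice operations are pointwise plus the estimate $|f(x^*)|\leq\|x^*\|\,\|f\|_{FBL[E]}$ (this is exactly the computation the paper records later as Lemma \ref{PropNormFree}; treat $x^*=0$ separately, as that lemma does, since $x^*/\|x^*\|$ is undefined there); the inclusion $\subseteq$ uses uniqueness of the factorization, and you rightly ground it in the density argument (two lattice homomorphisms agreeing on $\phi(E)$ agree on the closed sublattice it generates) rather than in the literal uniqueness clause of the universal property, whose quantification also involves the norm equality $\|\hat{T}\|=\|T\|$; and (i) follows from (ii) via the identification $FBL(A)=FBL[\ell_1(A)]$, the same one the paper invokes in the proof of Theorem \ref{TheoremDisjointFamiliesInTheBanachSpaceDualOfFBL(A)}, with the scaling $x^*=\lambda y^*$, $y^*\in[-1,1]^A$, tracked correctly through positive homogeneity. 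What the paper's approach buys is brevity and deference to the original sources (in particular \cite{dPW15}, whose construction of $FBL(A)$ is different and whose Theorem 5.5 is proved by other means); what yours buys is a proof entirely within this paper's framework that moreover unifies (i) and (ii) under a single argument.
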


Let us now use Lemma \ref{LemmDisjointOrDependent} to deal with disjoint families in $FBL(A)^*$ and $FBL[E]^*$. Motivated by the study of free and projective objects, disjoint families in free Banach lattices were studied in \cite{dPW15} and, more recently, in \cite{APRA}. It was proved in \cite{dPW15} that disjoint families in $FBL(A)$ can only be at most countable (this was proved in a more general way in \cite{APRA}, where the authors showed that the free Banach lattice $FBL[E]$ satisfies the $\sigma$-bounded chain condition (see \cite[Theorem 1.2]{APRA})), although $FBL(A)^*$ always contains a disjoint family of cardinality $|A|$.
Question 12.8 in \cite{dPW15} asks how large disjoint families in $FBL(A)^*$ can be. Thanks to the advances made on the understanding of free Banach lattices during the past few years and the relation between lattice homomorphisms and disjoint families, we will easily show that, indeed, there are disjoint families of cardinality $2^{|A|}$.

Let $E$ be a Banach space. Then, it is immediate that $\delta_{x^*},\delta_{y^*} \in FBL[E]^*$ are linearly independent whenever $x^*,y^* \in E^*$ are linearly independent.
Moreover, if $0 \neq x^*=-ay^*$ with $a>0$, then $\delta_{x^*}$ and $\delta_{y^*}$ are also linearly independent, since both are nonzero and if $x \in E$ is any element for which $x^*(x)>0$, we have that $\delta_{x^*}( \delta_x \vee 0)=x^*(x) >0$ but 
\begin{equation*} 
\delta_{y^*}( \delta_x \vee 0)= y^*(x) \vee 0 = \left(-\frac{1}{a} x^*(x) \right) \vee 0 = 0.
\end{equation*} 
Thus, by Lemma \ref{LemmDisjointOrDependent}, the set $\{\delta_{x^*}: x^* \in S_{E^*}\}$ is a disjoint family of cardinality $|S_{E^*}|$ and the next proposition follows:
\begin{prop} \label{theorem:disjointfamily1}
Let $E$ be a Banach space. Then, $FBL[E]^*$ contains a disjoint family of cardinality $|S_{E^*}|$.
\end{prop}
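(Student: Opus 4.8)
The plan is to establish the claimed disjoint family in $FBL[E]^*$ by exhibiting the concrete family $\{\delta_{x^*} : x^* \in S_{E^*}\}$ and verifying that its members are pairwise disjoint. The cardinality is clearly $|S_{E^*}|$ once we know the map $x^* \mapsto \delta_{x^*}$ is injective on $S_{E^*}$, so the real content is the disjointness.

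First I would recall from Proposition \ref{PropSetHomFBL}(ii) that each $\delta_{x^*}$ with $x^* \in E^*$ is an element of $\Hom(FBL[E],\R)$, so all the elements of our candidate family are lattice homomorphisms. This lets me invoke Lemma \ref{LemmDisjointOrDependent}, which says that any two lattice homomorphisms are either linearly dependent or disjoint. Consequently, to prove disjointness of $\delta_{x^*}$ and $\delta_{y^*}$ for distinct $x^*,y^* \in S_{E^*}$, it suffices to rule out linear dependence, i.e.\ to show that $\delta_{x^*}$ and $\delta_{y^*}$ are \emph{linearly independent}.

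Next I would reduce the linear independence to a statement about $E^*$. The injectivity and linear structure of $\phi$ together with the evaluation formula $\delta_{x^*}(\delta_x) = x^*(x)$ suggest that linear independence of $x^*,y^*$ in $E^*$ should pass to $\delta_{x^*},\delta_{y^*}$ directly: if $x^*,y^*$ are linearly independent, then choosing $x \in E$ with $x^*(x) \neq 0 = y^*(x)$ (or a similar separating witness) shows $\delta_{x^*}$ and $\delta_{y^*}$ cannot be proportional. The delicate case, and the one I expect to be the main obstacle, is when $x^* = -a y^*$ with $a > 0$, since then $x^*$ and $y^*$ \emph{are} linearly dependent in $E^*$ yet we still need $\delta_{x^*}$ and $\delta_{y^*}$ to be linearly independent. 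Here the positive homogeneity on the positive cone is crucial: I would test both functionals on the element $\delta_x \vee 0 \in FBL[E]$ for a suitable $x$ with $x^*(x) > 0$. Since the functions in $FBL[E]$ are positively homogeneous only for nonnegative scalars, evaluating $\delta_{x^*}(\delta_x \vee 0) = x^*(x) > 0$ while $\delta_{y^*}(\delta_x \vee 0) = y^*(x)\vee 0 = (-\tfrac1a x^*(x))\vee 0 = 0$ separates the two, giving linear independence.

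Having disposed of both cases, every pair of distinct members of $\{\delta_{x^*} : x^* \in S_{E^*}\}$ is linearly independent, hence by Lemma \ref{LemmDisjointOrDependent} pairwise disjoint, and the family has cardinality $|S_{E^*}|$, completing the proof. I would note that the antipodal case is exactly where the distinction between Banach spaces and Banach lattices manifests, since in the purely linear setting $x^*$ and $-x^*$ would be dependent; it is the lattice operation $\vee 0$ that breaks this. Apart from that subtlety, the argument is a routine application of the already-established Lemma \ref{LemmDisjointOrDependent} together with the explicit description of the evaluation functionals.
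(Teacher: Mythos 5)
Your proposal is correct and follows essentially the same route as the paper: the paper also takes the family $\{\delta_{x^*} : x^* \in S_{E^*}\}$, reduces disjointness to linear independence via Lemma \ref{LemmDisjointOrDependent}, and handles the antipodal case $x^* = -a y^*$ by testing against $\delta_x \vee 0$ exactly as you do. No gaps; this matches the paper's argument in both structure and detail.
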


We use Proposition \ref{theorem:disjointfamily1} to answer \cite[Question 12.8]{dPW15}.

\begin{thm}\label{TheoremDisjointFamiliesInTheBanachSpaceDualOfFBL(A)}
	If $A$ is an infinite set, then $FBL(A)^*$ contains a disjoint family of cardinality $2^{|A|}$. Moreover, there is no disjoint family of cardinality larger than $2^{|A|}$.
\end{thm}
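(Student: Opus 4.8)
The plan is to apply Proposition \ref{theorem:disjointfamily1}, which reduces the problem to computing the cardinality of $S_{E^*}$ for the right choice of Banach space $E$. Since the free Banach lattice $FBL(A)$ generated by a set $A$ is identified with $FBL[\ell_1(A)]$ (Corollary 2.8 in \cite{ART18}), the natural choice is $E = \ell_1(A)$, whose dual is $E^* = \ell_\infty(A)$.

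\textbf{Lower bound.} First I would observe that $|S_{\ell_\infty(A)}| = 2^{|A|}$. Indeed, the unit sphere of $\ell_\infty(A)$ contains every function $x^* \colon A \longrightarrow \{-1,1\}$, and there are exactly $2^{|A|}$ such sign-functions, giving $|S_{\ell_\infty(A)}| \geq 2^{|A|}$. For the reverse, $\ell_\infty(A) \subseteq \R^A$ has cardinality at most $|\R|^{|A|} = (2^{\aleph_0})^{|A|} = 2^{\aleph_0 \cdot |A|} = 2^{|A|}$, using that $A$ is infinite. Hence $|S_{\ell_\infty(A)}| = 2^{|A|}$. Applying Proposition \ref{theorem:disjointfamily1} with $E = \ell_1(A)$ then yields a disjoint family in $FBL[\ell_1(A)]^* = FBL(A)^*$ of cardinality $2^{|A|}$, which establishes the lower bound.

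\textbf{Upper bound.} Next I would bound the cardinality of $FBL(A)^*$ itself. By Remark \ref{LemFunctionsDependFinitelyManyCoordinates}, every function in $FBL(A)$ depends on only countably many coordinates and is continuous on $[-1,1]^A$, so each such function is determined by its restriction to a countable subproduct; counting continuous real-valued functions on a separable-style countable product together with the choice of the countable coordinate set gives $|FBL(A)| \leq 2^{|A|}$ when $A$ is infinite. A disjoint family in $FBL(A)^*$ consists of nonzero elements that are pairwise disjoint; since the dual of a Banach lattice satisfies the property that any disjoint family of \emph{functionals} has cardinality at most the density character (equivalently, at most $|FBL(A)|$) of the predual, no disjoint family can exceed $2^{|A|}$. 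More concretely, a disjoint family in the dual can be separated by testing against a single fixed positive element, and the number of distinct "supports" is controlled by $|FBL(A)| \leq 2^{|A|}$.

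\textbf{Main obstacle.} I expect the delicate point to be the upper bound rather than the lower bound. Producing the family of cardinality $2^{|A|}$ is essentially a cardinality count fed into Proposition \ref{theorem:disjointfamily1}, but arguing that a disjoint family in the \emph{dual} $FBL(A)^*$ cannot exceed $2^{|A|}$ requires care: one must bound the size of a pairwise-disjoint collection of functionals, and the cleanest route is to first establish the bound $|FBL(A)| \leq 2^{|A|}$ on the predual via Remark \ref{LemFunctionsDependFinitelyManyCoordinates} and then invoke a general fact that a disjoint family of nonzero positive functionals can be injected into a set of size controlled by the cardinality of a norming subset of the predual. Making this last counting argument rigorous — in particular verifying that disjointness forces distinct behavior on a separating family whose cardinality is at most $2^{|A|}$ — is where I would spend the most effort.
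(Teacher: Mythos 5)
Your lower bound is exactly the paper's argument: identify $FBL(A)$ with $FBL[\ell_1(A)]$ via \cite[Corollary 2.8]{ART18} and feed $|S_{\ell_\infty(A)}|=2^{|A|}$ into Proposition \ref{theorem:disjointfamily1}. That half is correct.

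The upper bound, however, has a genuine gap. The ``general fact'' you invoke --- that a disjoint family in the dual of a Banach lattice has cardinality at most the density character of the predual (which, by the way, is \emph{not} equivalent to the cardinality $|FBL(A)|$) --- is false, and the first half of this very theorem refutes it: $FBL(A)$ has density character at most $|A|$, since the set $\hat{A}$ of finite $\Q$-linear combinations and finite suprema/infima of the $\delta_a$'s is dense in it and has cardinality $|A|$, yet by the lower bound the dual contains a disjoint family of cardinality $2^{|A|}>|A|$. The variant of your claim with $|FBL(A)|$ in place of the density character also fails (at least consistently): every element of $FBL(A)$ is the limit of a sequence from $\hat{A}$, so $|FBL(A)|\leq |A|^{\aleph_0}$, and for $|A|=\aleph_1$ under the Continuum Hypothesis this equals $\aleph_1<2^{\aleph_1}$, so your claimed bound would contradict the lower bound. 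The underlying problem is that you spend your effort bounding $|FBL(A)|$, which is the wrong object; what is needed is a bound on $|FBL(A)^*|$, and once you see this, the upper bound is much simpler than you anticipate --- disjointness plays no role in it at all. Since $\hat{A}$ is dense in $FBL(A)$, the restriction map $R\colon FBL(A)^*\longrightarrow \R^{\hat{A}}$ given by $R(f^*)=f^*\vert_{\hat{A}}$ is injective (a bounded functional is determined by its values on a dense set), whence $|FBL(A)^*|\leq |\R^{\hat{A}}|=\bigl(2^{\aleph_0}\bigr)^{|\hat{A}|}=2^{|A|}$. Any disjoint family is in particular a family of distinct elements of $FBL(A)^*$, so it cannot have cardinality larger than $2^{|A|}$. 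This is precisely the paper's proof.
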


\begin{proof} We have that $FBL(A)=FBL[\ell_1(A)]$ (see \cite[Corollary 2.8]{ART18}). By Proposition \ref{theorem:disjointfamily1}, $FBL(A)^*$ contains a disjoint family of cardinality $|S_{\ell_\infty(A)}|=2^{|A|}$. Let us prove now that $|FBL(A)^*| \leq 2^{|A|}$. Indeed, let $\hat{A}$ be the smallest subset of $FBL(A)$ containing $A$, and closed under the operations $\wedge$ and $\vee$ and finite linear combinations with coefficients in $\mathbb{Q}$. Let $R \colon FBL(A)^* \longrightarrow \mathbb{R}^{\hat{A}}$ be the restriction map given by $R(f^*) = f^* \vert_{\hat{A}}$ for every $f^* \in FBL(A)^*$. Since $\hat{A}$ is dense in $FBL(A)$, we have that $R$ is injective. Thus, we have that
\begin{equation*} 
|FBL(A)^*| \leq |\mathbb{R}^{\hat{A}}| = |(2^{\mathbb{N}})^{\hat{A}}| = |2^{\mathbb{N} \times \hat{A}}| = 2^{|\hat{A}|} = 2^{|A|}.
\end{equation*} 
\end{proof}

\section{Banach lattices on which every lattice homomorphism attains its norm}\label{SectionPositiveCases}

In this section, we give some sufficient conditions so that the set $\Hom(X, \R)$ is a subset of $\NA(X, \R)$, that is, every lattice homomorphism on $X$ attains its norm. From the description given in Section \ref{SectionStructureHom} (see Example \ref{ExampleSetHomClassical}), we have that every lattice homomorphism defined on a classical Banach lattice attains its norm. The examples of items (a) and (c) are all examples of Banach lattices with order continuous norm. The norm of a Banach lattice $X$ is said to be \textit{order continuous} if $\inf \{\|x\|: x\in A\}=0$ whenever $A\subset X$ is a downward directed set such that $\inf(A)=0$. We refer the reader to \cite[Section 2.4]{Meyer} for a detailed background on Banach lattices with order continuous norm. In particular, we will use that a Banach lattice has order continuous norm if and only if every monotone order bounded sequence is convergent (see \cite[Theorem 2.4.2]{Meyer}). Recall that a sequence $\left(x_n \right)_{n\in \N}$ in a Banach lattice $X$ is \textit{order bounded} if there are $x,y \in X$ such that $x \leq x_n \leq y$ for every $n\in \N$. In the next theorem we show that	 every lattice homomorphism on a Banach lattice with order continuous norm attains its norm.	

\begin{thm}
	\label{TheoNAOrderCont}
	Let $X$ be a Banach lattice with order continuous norm. Then, $\Hom(X,\R) \subseteq \NA (X,\R)$.
\end{thm}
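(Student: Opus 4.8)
The plan is to exploit the fact that the kernel of a lattice homomorphism is not merely a closed subspace but a closed \emph{ideal}, and to use order continuity to upgrade this ideal to a projection band with a one-dimensional complement. We may clearly assume $x^*\neq 0$ and, after normalizing, that $\|x^*\|=1$. Since $x^*$ is a lattice homomorphism we have $x^*(|x|)=x^*(x\vee(-x))=x^*(x)\vee(-x^*(x))=|x^*(x)|$, so that $\|x^*\|=\sup\{x^*(x):x\in B_X,\ x\ge 0\}$; hence it suffices to produce a single positive norm-one element at which this supremum is attained.

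First I would observe that $I:=\ker x^*$ is a closed ideal of $X$. Here order continuity enters decisively: a Banach lattice with order continuous norm is Dedekind complete and all of its closed ideals are bands (see \cite{Meyer}). Consequently $I$ is a projection band, so $X=I\oplus N$ with $N:=I^{d}$ the complementary band, and the band projection $Q$ onto $N$ is contractive, since $|Qx|=Q|x|\le |x|$ for every $x\in X$. Next I would show that $N$ is one-dimensional. Indeed, the restriction of $x^*$ to $N$ is a lattice homomorphism into $\R$ which is injective, because any $y\in N$ with $x^*(y)=0$ lies in $N\cap I=\{0\}$; and an injective linear map from a Riesz space into the one-dimensional space $\R$ forces the domain to have dimension at most one. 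As $x^*\neq 0$ we get $N=\R u$ for some $u\ge 0$ with $\|u\|=1$.

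Finally, the attainment is immediate: for any $x\in B_X$ we have $x^*(x)=x^*(Qx)+x^*(x-Qx)=x^*(Qx)$, because $x-Qx\in I=\ker x^*$, and writing $Qx=tu$ we obtain $|t|=\|Qx\|\le\|x\|\le 1$, whence $x^*(x)=t\,x^*(u)\le x^*(u)$. Thus $\|x^*\|=x^*(u)$ and the norm is attained at $u\in S_X$.

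I expect the only genuine difficulty to be the structural input that, under an order continuous norm, the closed ideal $\ker x^*$ is a projection band; everything else is bookkeeping. It is worth stressing that this is precisely where the hypothesis is used, and why the result is special to lattice homomorphisms rather than to arbitrary positive functionals: it is the lattice-homomorphism property that makes the complementary band one-dimensional. A tempting alternative---choosing a maximizing sequence of positive unit vectors and passing to a monotone order limit---runs into trouble precisely because an order continuous Banach lattice need not be a KB-space (for instance, in $c_0$ the increasing sequence $\sum_{k\le n} e_k$ is norm bounded but has no supremum), so the relevant monotone sequence may fail to be order bounded; the band decomposition sidesteps this issue entirely.
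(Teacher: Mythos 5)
Your proof is correct, and it takes a genuinely different route from the one in the paper. You argue structurally: $\ker x^*$ is a closed ideal, order continuity of the norm makes every closed ideal a band and the lattice Dedekind complete (both facts are indeed in \cite{Meyer}), hence $\ker x^*$ is a projection band, and the lattice-homomorphism property forces the complementary band to be one-dimensional, spanned by a positive norm-one element $u$ (in fact an atom), at which the norm is attained because the band projection is contractive. All the ingredients you invoke are standard and correctly applied. The paper's proof, by contrast, is exactly the ``tempting alternative'' that you dismiss in your closing paragraph, and your objection to it misses the key twist: starting from positive $x_n \in B_X$ with $x^*(x_n) \to \|x^*\|$, the paper forms $y_n=\bigwedge_{k\le n}\bigl(\frac{\|x^*\|}{x^*(x_k)}x_k\bigr)$, which is a \emph{decreasing} sequence of positive elements, hence automatically order bounded between $0$ and $y_1$; the failure of the KB-property that you point out (increasing norm-bounded sequences in $c_0$ with no supremum) is irrelevant for decreasing positive sequences. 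By \cite[Theorem 2.4.2]{Meyer} the sequence $(y_n)_{n\in\N}$ converges to some $y\in B_X$, and since $x^*$ preserves finite infima, $x^*(y_n)=\|x^*\|$ for every $n$, so $x^*(y)=\|x^*\|$. As for what each approach buys: the paper's argument is shorter and needs only the sequential characterization of order continuity, while yours requires more structure theory but yields more information, namely that every nonzero lattice homomorphism on a Banach lattice with order continuous norm is, up to a positive scalar, the coordinate functional of a one-dimensional projection band spanned by an atom; this explains structurally why atomless examples such as $L_p[0,1]$ admit no nontrivial lattice homomorphisms (Example \ref{ExampleSetHomClassical}).
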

\begin{proof}
	Let $x^*\colon X \longrightarrow \R$ be a lattice homomorphism different from zero. Take any sequence $(x_n)_{n \in \N} \subseteq B_X$ such that $x^*(x_n)$ converges to $\|x^*\|$. Changing $x_n$ by $|x_n|$ if it is  necessary, we can assume that each $x_n$ is positive and $x^*(x_n)>0$ for every $n\in \N$. Consider the sequence $\left(y_n\right)_{n \in \N}$ given by the formula
	\begin{equation*} 
	y_n = \bigwedge_{k \leq n} \left( \frac{\|x^*\|}{x^*(x_k)} x_k\right)   \mbox{ for every } n\in \N.
	\end{equation*} 
	Notice that $\left(y_n\right)_{n \in \N}$ is a positive decreasing sequence, so it is order bounded (by $y_1$ and the vector zero). By \cite[Theorem 2.4.2]{Meyer}, it converges to some $y \in X$.
	Moreover, it follows from the monotonicity of the norm that 
	\begin{equation*} 
	\| y \| \leq \left\| \frac{\|x^*\|}{x^*(x_n)} x_n \right\| = \frac{\|x^*\|}{x^*(x_n)} \|x_n\| \leq \frac{\|x^*\|}{x^*(x_n)}
	\end{equation*} 
	for every $n\in \N$. Since $\frac{\|x^*\|}{x^*(x_n)}$ converges to $1$, we conclude that $y \in B_X$. We claim that $|x^*(y)|=\|x^*\|$. Indeed, this is immediate since $$x^*(y_n) = x^* \left( \bigwedge_{k \leq n} \left( \frac{\|x^*\|}{x^*(x_k)} x_k\right)\right) =  \bigwedge_{k \leq n} \left( \frac{\|x^*\|}{x^*(x_k)} x^*(x_k) \right)=\|x^*\| $$
	for every $n\in \N$ and $y$ is the limit of $\left(y_n\right)_{n \in \N}$. Thus, $x^* \in \NA(X, \R)$, as desired.
\end{proof}

%We highlight it in the next result, which is just a combination of Corollary \ref{Corhomozero} and Example \ref{ExampleSetHomClassical}.  

%It is well known, thanks to the James' theorem, that the class of reflexive Banach spaces is exactly the same that the class consisting of those Banach spaces $E$ for which the equality $E^* = \NA(E, \R)$ holds. In this sense, it is natural to wonder if this is also true in the Banach lattice setting, that is, if a Banach lattice $X$ is reflexive if, and only if, $\Hom(X, \R) \subseteq \NA(X,R)$. But, from the description of $\Hom(X, \R)$ in Example \ref{ExampleSetHomClassical}, the following result should be clear, which tell us that this is not valid in the Banach lattice setting:

%
%\begin{prop}\label{PropNormAttainingHomClassical}
%Let $\mu$ be any measure, $1 \leq p < \infty$, and $K$ be a topological Hausdorff space. Let $\Delta$ be the Cantor set. Then, 
%\begin{itemize}
%	\item[(a)] $\Hom(c_0, \R) \subseteq \NA(c_0 ,\R)$.
%	\item[(b)] $\Hom(\ell_p, \R) \subseteq \NA(\ell_p, \R)$. 
%	\item[(c)] $\Hom(L_p, \R) \subseteq \NA(L_p, \R)$.
%	\item[(d)] $\Hom(C(K), \R) \subseteq \NA(C(K), \R)$.
%	\item[(e)] $\Hom(C(\Delta, L_1),\R) \subseteq \NA(C(\Delta, L_1),\R)$.
%\end{itemize}
%\end{prop}
%
%
%Let us push further Proposition \ref{PropNormAttainingHomClassical} and present a more general setting where we get the inclusion $\Hom(X, \R) \subseteq \NA(X, \R)$. 

\bigskip

A natural class of Banach lattices generalizing the class of Banach lattices with order continuous norm is the class of $\sigma$-Dedekind complete Banach lattices. Recall that a Banach lattice is said to be \textit{$\sigma$-Dedekind complete} if every order bounded sequence in it has a supremum or an infimum.
We do not know whether the previous theorem can be extended to $\sigma$-Dedekind complete Banach lattices. The main difficulty is that, although lattice homomorphisms respect lattice operations, they might not respect infinite suprema and infima, as we can see in the next example.
\begin{example}
Take $K=\N \cup \{\infty\}$ the one point compactification of the natural numbers with the discrete topology. Then, $\delta_\infty \in C(K)^*$ is a lattice homomorphism by Example \ref{ExampleSetHomClassical}. Take $f_n=\chi_{\{1,\ldots,n\}}$ the characteristic function of the set $\{1,\ldots,n\}$. Then, $\left(f_n\right)_{n \in \N}$ is an increasing sequence. Moreover, the supremum $\bigvee_{n \in \N} f_n$ exists and it is the constant function $1$. Nevertheless, 
$$\bigvee_{n \in \N} \delta_\infty (f_n)=0 \neq 1 = \delta_\infty(1)= \delta_\infty \left(\bigvee_{n \in \N} f_n\right) .$$ 
\end{example}

Notice that every $\sigma$-Dedekind complete Banach lattice without order continuous norm contains a subspace isomorphic to $\ell_\infty$ (see \cite[Proposition 1.a.7]{LinTza12}). In particular, every separable $\sigma$-Dedekind complete Banach lattice has order continuous norm. Now, bearing in mind that every dual Banach lattice is $\sigma$-Dedekind complete (see the comment below \cite[Definition 1.a.3]{LinTza12}), we get the following:

\begin{cor}
 $\Hom(X,\R) \subseteq \NA (X,\R)$ whenever $X$ is a separable dual Banach lattice or, in general, a dual Banach lattice not containing a subspace isomorphic to $\ell_\infty$.
\end{cor}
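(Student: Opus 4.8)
The plan is to deduce the corollary directly from Theorem \ref{TheoNAOrderCont} together with the two structural facts recalled just before the statement, so the work is almost entirely a matter of chaining implications rather than producing a fresh argument. The target is: every lattice homomorphism on $X$ attains its norm whenever $X$ is a separable dual Banach lattice, or more generally a dual Banach lattice containing no subspace isomorphic to $\ell_\infty$. The key observation is that Theorem \ref{TheoNAOrderCont} already handles the case of order continuous norm, so the entire task reduces to showing that the hypotheses on $X$ force the norm to be order continuous.

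First I would invoke the cited fact that every dual Banach lattice is $\sigma$-Dedekind complete (the comment below \cite[Definition 1.a.3]{LinTza12}). Thus both classes of $X$ in the statement are $\sigma$-Dedekind complete Banach lattices, and I can work within that setting. Next I would use the dichotomy recalled from \cite[Proposition 1.a.7]{LinTza12}: a $\sigma$-Dedekind complete Banach lattice either has order continuous norm or contains a subspace isomorphic to $\ell_\infty$. For the general case this is immediate: by hypothesis $X$ contains no copy of $\ell_\infty$, so the dichotomy forces $X$ to have order continuous norm, and Theorem \ref{TheoNAOrderCont} applies. For the separable case I would note that a separable Banach lattice cannot contain an isomorphic copy of the nonseparable space $\ell_\infty$; hence a separable $\sigma$-Dedekind complete lattice automatically falls into the order continuous alternative, again reducing to Theorem \ref{TheoNAOrderCont}.

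Concretely, the proof I would write is the following. Since $X$ is a dual Banach lattice it is $\sigma$-Dedekind complete. If $X$ does not contain a subspace isomorphic to $\ell_\infty$ (in particular, if $X$ is separable, since $\ell_\infty$ is nonseparable and a subspace of a separable space is separable), then by the dichotomy of \cite[Proposition 1.a.7]{LinTza12} its norm is order continuous. Theorem \ref{TheoNAOrderCont} then yields $\Hom(X,\R) \subseteq \NA(X,\R)$, which is exactly the claim.

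The honest assessment is that there is no real obstacle here; the corollary is a formal consequence of Theorem \ref{TheoNAOrderCont} and the two structure-theoretic facts quoted immediately above it, and in fact the author has essentially spelled out the required implications in the surrounding prose. The only point that deserves a line of care is the separability reduction, namely making explicit that a separable space cannot house a copy of the nonseparable $\ell_\infty$, so that the separable hypothesis genuinely subsumes the no-$\ell_\infty$ hypothesis; once that is noted, both cases collapse to the single verification that the norm is order continuous.
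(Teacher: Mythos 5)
Your proof is correct and is precisely the argument the paper intends: the corollary is stated immediately after the authors recall that dual Banach lattices are $\sigma$-Dedekind complete and that $\sigma$-Dedekind complete lattices without order continuous norm contain $\ell_\infty$, so the chain of implications you give (plus the observation that separability rules out a copy of the nonseparable $\ell_\infty$) reduces everything to Theorem \ref{TheoNAOrderCont}, exactly as in the paper.
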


\bigskip

On the other hand, the class of Banach lattices with order continuous norm generalizes the class of {\it Kantorovich-Banach spaces} (KB-space, for short). These are the Banach lattices in which every norm bounded monotone sequence is norm convergent.
This class of Banach lattices coincides with the class of Banach lattices not containing a sublattice isomorphic to $c_0$ (see \cite[Theorem 2.4.12]{Meyer}) or, equivalently, a subspace isomorphic to $c_0$ (see the Remark in page 35 of \cite{LinTza12}). Thus, the class of KB-spaces generalizes in turn the class of reflexive Banach lattices.

\bigskip

The most natural examples of Banach lattices without order continuous norm are $C(K)$-spaces. In order to show that Theorem \ref{TheoNAOrderCont} also holds for this class, we consider the more general class of Banach lattices of the form $C(K,X)$, where $K$ is a compact Hausdorff topological space and $X$ is a Banach lattice. The following characterization follows from Lemma \ref{lemma:HomomC(K,X)}.

\begin{prop}
	Let $X$ be a Banach lattice and $K$ a compact Hausdorff topological space. Then, $\Hom(C(K,X),\R) \subseteq \NA(C(K,X),\R)$ if and only if $\Hom(X,\R) \subseteq \NA(X,\R)$.
\end{prop}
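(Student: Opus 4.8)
The plan is to lean entirely on Lemma \ref{lemma:HomomC(K,X)}, which sets up a correspondence between nonnull elements of $\Hom(C(K,X),\R)$ and pairs $(a,x^*)$ with $a\in K$ and $x^*\in\Hom(X,\R)$ via $\varphi=x^*\circ\mathrm{ev}_a$, where $\mathrm{ev}_a\colon C(K,X)\longrightarrow X$ denotes evaluation at the point $a$. The first thing I would record is that this correspondence preserves norms. Writing $1\otimes x$ for the constant function with value $x$ (as in the proof of Lemma \ref{lemma:HomomC(K,X)}), one has $\|1\otimes x\|_\infty=\|x\|_X$ and $\varphi(1\otimes x)=x^*(x)$, which yields $\|\varphi\|\geq\|x^*\|$; conversely, since $|x^*(f(a))|\leq\|x^*\|\,\|f(a)\|_X\leq\|x^*\|\,\|f\|_\infty$, we get $\|\varphi\|\leq\|x^*\|$. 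Hence $\|\varphi\|=\|x^*\|$. I would also note that $\mathrm{ev}_a$ is a lattice homomorphism, which is immediate because the lattice operations on $C(K,X)$ are pointwise, so that $x^*\circ\mathrm{ev}_a$ genuinely belongs to $\Hom(C(K,X),\R)$ for any $x^*\in\Hom(X,\R)$.

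For the implication assuming $\Hom(X,\R)\subseteq\NA(X,\R)$, I would take a nonnull $\varphi\in\Hom(C(K,X),\R)$, write $\varphi=x^*\circ\mathrm{ev}_a$ via the lemma, and choose $x_0\in S_X$ with $|x^*(x_0)|=\|x^*\|$. Then $f_0:=1\otimes x_0$ satisfies $f_0\in S_{C(K,X)}$ and $|\varphi(f_0)|=|x^*(x_0)|=\|x^*\|=\|\varphi\|$, so $\varphi$ attains its norm.

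For the converse, assuming $\Hom(C(K,X),\R)\subseteq\NA(C(K,X),\R)$, I would start from a nonnull $x^*\in\Hom(X,\R)$, fix any point $a\in K$, and set $\varphi:=x^*\circ\mathrm{ev}_a$. By the remarks above $\varphi\in\Hom(C(K,X),\R)$ and it is nonnull since $\varphi(1\otimes x)=x^*(x)$. By hypothesis there is $f_0\in S_{C(K,X)}$ with $|\varphi(f_0)|=\|\varphi\|=\|x^*\|$. The final step is to promote the evaluation point to the unit sphere of $X$: from $\|x^*\|=|x^*(f_0(a))|\leq\|x^*\|\,\|f_0(a)\|_X$ together with $\|f_0(a)\|_X\leq\|f_0\|_\infty=1$ we force $\|f_0(a)\|_X=1$, so $f_0(a)\in S_X$ witnesses that $x^*$ attains its norm.

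I do not anticipate a serious obstacle here; the whole argument is an application of Lemma \ref{lemma:HomomC(K,X)} together with elementary norm estimates. The only points demanding a little care are the verification that $\mathrm{ev}_a$ is a lattice homomorphism, the norm identity $\|\varphi\|=\|x^*\|$, and, in the converse direction, the observation that $f_0(a)$ must lie on the unit sphere rather than merely in the unit ball. The degenerate possibility $K=\emptyset$, which would make $C(K,X)$ trivial, is ruled out by the standing assumption that $K$ is a (nonempty) compact Hausdorff space, and is in any case what guarantees a point $a\in K$ to plug into the converse.
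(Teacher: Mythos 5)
Your proposal is correct and follows essentially the same route as the paper: both directions rest on Lemma \ref{lemma:HomomC(K,X)}, the identity $\|\varphi\|=\|x^*\|$ obtained via the constant functions $1\otimes x$, and the observation that a norm-attaining witness for $\varphi=x^*\circ\mathrm{ev}_a$ transfers to one for $x^*$ through evaluation at $a$ (and conversely). The only difference is presentational: you prove the two implications directly, while the paper states both in contrapositive form (starting from a non-norm-attaining homomorphism on one side and producing one on the other).
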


\begin{proof}
Suppose first that there exists $x^* \in \Hom(X,\R)$ not attaining its norm. Take any $a \in K$.
Then, the formula $\varphi_{x^*}(f)=x^*(f(a))$ for every $f \in  C(K,X)$ defines a lattice homomorphism. Since $\varphi_{x^*}(1 \otimes x)= x^*(x)$ for every $x \in X$, it follows that $\|\varphi_{x^*}\|=\|x^*\|$. Moreover, since $f(K) \subset B_X$ for every function $f$, it is immediate that $\varphi_{x^*}$ does not attain its norm.

Now suppose that there is $\varphi \in \Hom(C(K,X),\R)$ which does not attain its norm. By Lemma \ref{lemma:HomomC(K,X)}, there exists $x^* \in  \Hom(X,\R)$ and $a \in K$ such that
$ \varphi(f)=x^*(f(a))$ for every function $f \in  C(K,X)$.
By a similar argument, $\|\varphi\|=\|x^*\|$ and $x^*$ does not attain its norm.
\end{proof}

We summarize the main results obtained in this section in the  following corollary.

\begin{cor} \label{reflexive1} $\Hom(X, \R) \subseteq \NA(X, \R)$  and   $\Hom(C(K,X),\R) \subseteq \NA(C(K,X),\R)$ in the following cases:
\begin{itemize}
\item[(a)] $X$ is a KB-space or, equivalently, $X$ does not contain a subspace isomorphic to $c_0$.
\item[(b)] $X$ is lattice isometric to $c_0(\Gamma)$ for some set $\Gamma$ or, more generally, whenever $X$ has order continuous norm.
\item[(c)] $X$ is a dual lattice not containing $\ell_\infty$.
\end{itemize}	
\end{cor}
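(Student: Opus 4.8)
The plan is to deduce the statement entirely from results already established in this section, since the corollary is a synthesis rather than a new argument. The organizing remark is that the $C(K,X)$-assertion is a formal consequence of the scalar-valued one: the preceding Proposition shows that $\Hom(C(K,X),\R)\subseteq\NA(C(K,X),\R)$ holds if and only if $\Hom(X,\R)\subseteq\NA(X,\R)$ holds. Hence, once I verify the inclusion $\Hom(X,\R)\subseteq\NA(X,\R)$ in each of the three cases (a), (b), (c), the corresponding inclusion for $C(K,X)$ follows immediately in the same case, and the entire proof reduces to checking the scalar-valued inclusion.

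For case (a), I would recall the implications discussed just above the statement: every KB-space has order continuous norm, because an order bounded monotone sequence is in particular norm bounded, so the defining property of a KB-space forces it to converge in norm, which is exactly order continuity via \cite[Theorem 2.4.2]{Meyer}. The equivalence between being a KB-space and not containing a subspace isomorphic to $c_0$ was also recorded earlier. Thus $X$ has order continuous norm, and Theorem \ref{TheoNAOrderCont} yields $\Hom(X,\R)\subseteq\NA(X,\R)$. For case (b), the clause ``$X$ has order continuous norm'' is precisely the hypothesis of Theorem \ref{TheoNAOrderCont}, so the inclusion is immediate; the special instance $X=c_0(\Gamma)$ is listed only to exhibit a concrete lattice that is \emph{not} a KB-space yet still has order continuous norm, and this standard fact about $c_0(\Gamma)$ places it again under Theorem \ref{TheoNAOrderCont}. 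For case (c), the conclusion is exactly the content of the corollary proved just before the present one: a dual Banach lattice is $\sigma$-Dedekind complete, and if it does not contain a subspace isomorphic to $\ell_\infty$ then it has order continuous norm, so that Theorem \ref{TheoNAOrderCont} applies once more.

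There is essentially no genuine obstacle here, as all the analytic work was carried out in Theorem \ref{TheoNAOrderCont}, in the preceding $C(K,X)$-Proposition, and in the dual-lattice corollary. The only points that require a moment's care are bookkeeping in nature: confirming the chain ``KB-space $\Rightarrow$ order continuous norm'' and ``$c_0(\Gamma)$ has order continuous norm'', both standard, and recognizing that case (c) was already fully treated. Accordingly, I would write the proof as a short paragraph that, for each case, reduces $X$ to the order continuous (or dual-lattice) setting, invokes the relevant earlier result to obtain $\Hom(X,\R)\subseteq\NA(X,\R)$, and then transfers the inclusion to $C(K,X)$ through the equivalence of the preceding Proposition.
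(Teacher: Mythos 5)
Your proposal is correct and follows essentially the same route as the paper, which states this corollary without a separate proof precisely because it is the synthesis you describe: Theorem \ref{TheoNAOrderCont} handles (a) and (b) via ``KB-space $\Rightarrow$ order continuous norm'', case (c) is the dual-lattice corollary proved just before, and the $C(K,X)$ statement transfers through the preceding Proposition. Your bookkeeping details (order bounded monotone sequences are norm bounded, $c_0(\Gamma)$ has order continuous norm) are accurate and consistent with the citations the paper records.
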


\begin{rem}
Notice that the examples of the previous corollary include the case when $X$ is reflexive. For the sake of completeness, we recall the reader that the following assertions for a Banach lattice $X$ are equivalent (see \cite[Theorem 2.4.15 and Proposition 5.4.13]{Meyer} and \cite[Theorem 4.71 and Theorem 5.29]{AB}):
\begin{enumerate}
	\item $X$ is reflexive;
	\item $X$ and $X^*$ are KB-spaces;
	\item $X$ does not contain any subspace isomorphic to $c_0$ or to $\ell_1$;
	\item $X$ does not contain any sublattice isomorphic to $c_0$ or to $\ell_1$;
	\item $X$ and $X^*$ have the Radon-Nikod\'ym property;
	\item $X^{**}$ has the Radon-Nikod\'ym property;
	\item $\ell_1$ is not lattice embeddable in either $X$ or $X^{*}$;
	\item Every positive operator from $\ell_1$ to $X$ is weakly compact.
\end{enumerate}
\end{rem}

\bigskip
 
Let us finish this section by making a simple but interesting remark about a phenomenon that happens to be true in both categories. It is well-known that every compact operator defined on a reflexive Banach space attains its norm and that reflexive Banach spaces are exactly those Banach spaces in which every functional attains its norm. We wonder if the same happens in the Banach lattice setting. Namely, we wonder whether every compact lattice homomorphism $T\colon X \longrightarrow Y$ attains its norm whenever $X$ and $Y$ are Banach lattices such that $\Hom(X, \R) \subseteq \NA(X, \R)$.
We prove that this is the case, at least, when $Y$ is an abstract $M$ space or, equivalently, $Y$ is lattice isometric to a sublattice of a $C(K)$-space (see, for instance, \cite[Theorem 1.b.6]{LinTza12}).

\begin{thm}
\label{TheoCompactNA}	
Let $X$ be a Banach lattice such that $\Hom(X, \R) \subseteq \NA(X, \R)$ and $Y$ an abstract $M$ space. Then, every compact lattice homomorphism $T\colon X \longrightarrow Y$ attains its norm.
\end{thm}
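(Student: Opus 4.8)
The plan is to represent $Y$ concretely as a space of continuous functions and to ``test'' $T$ against point evaluations, reducing the question to norm attainment of lattice homomorphisms on $X$.

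First I would use the equivalence recalled just before the statement: an abstract $M$ space is lattice isometric to a closed sublattice of some $C(K)$ with $K$ compact Hausdorff. Fixing such a lattice isometric embedding $j\colon Y\longrightarrow C(K)$ and replacing $T$ by $j\circ T$, I may assume that $T\colon X\longrightarrow C(K)$ is a compact lattice homomorphism; since $j$ is isometric, $T$ attains its norm as a map into $Y$ if and only if it does so as a map into $C(K)$. For each $k\in K$ the evaluation $\delta_k\colon C(K)\longrightarrow\R$ is a lattice homomorphism, so each composition $\delta_k\circ T$ belongs to $\Hom(X,\R)$. Writing out the supremum norm gives
\begin{equation*}
\norm{T}=\sup_{x\in B_X}\sup_{k\in K}\abs{(\delta_k\circ T)(x)}=\sup_{k\in K}\norm{\delta_k\circ T}.
\end{equation*}

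The key step is to show that this last supremum is \emph{attained}, i.e.\ that there is $k_0\in K$ with $\norm{\delta_{k_0}\circ T}=\norm{T}$, and this is exactly where compactness of $T$ enters. Since $T$ is compact, the set $\overline{T(B_X)}$ is norm-compact, and a standard equicontinuity (Arzel\`a--Ascoli type) argument shows that $T^*$ is continuous from $(B_{C(K)^*},w^*)$ into $(X^*,\norm{\cdot})$: if $\mu_\alpha\to\mu$ in the $w^*$-topology with $\norm{\mu_\alpha}\leq1$, then $\mu_\alpha\to\mu$ uniformly on the compact set $\overline{T(B_X)}$, whence $\norm{T^*\mu_\alpha-T^*\mu}\to0$. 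Now $k\mapsto\delta_k$ is $w^*$-continuous from $K$ into $B_{C(K)^*}$, so $k\mapsto T^*\delta_k=\delta_k\circ T$ is norm-continuous from $K$ into $X^*$, and hence $k\mapsto\norm{\delta_k\circ T}$ is a continuous real function on the compact space $K$. It therefore attains its maximum at some $k_0\in K$, and by the displayed identity $\norm{\delta_{k_0}\circ T}=\norm{T}$. To conclude, I would invoke the hypothesis: as $\delta_{k_0}\circ T\in\Hom(X,\R)\subseteq\NA(X,\R)$, there is $x_0\in S_X$ with $\abs{(\delta_{k_0}\circ T)(x_0)}=\norm{\delta_{k_0}\circ T}=\norm{T}$. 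Since $\abs{(\delta_{k_0}\circ T)(x_0)}=\abs{T(x_0)(k_0)}\leq\norm{T(x_0)}\leq\norm{T}$, both inequalities are equalities, so $\norm{T(x_0)}=\norm{T}$ and $T$ attains its norm at $x_0$.

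The main obstacle is the middle paragraph, namely showing that $k\mapsto\norm{\delta_k\circ T}$ attains its maximum. The norm is only $w^*$-lower semicontinuous, so without further input the supremum over $K$ need not be attained; it is precisely the compactness of $T$ (equivalently, the $w^*$-to-norm continuity of $T^*$ on bounded sets) that upgrades lower semicontinuity to genuine continuity and thereby guarantees a maximizer $k_0$. Everything else is routine: that point evaluations on $C(K)$ are lattice homomorphisms and that compositions of lattice homomorphisms are again lattice homomorphisms.
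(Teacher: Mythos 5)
Your proof is correct, and at the top level it shares the paper's skeleton: realize $Y$ as a sublattice of $C(K)$, note that each $\delta_k\circ T$ lies in $\Hom(X,\R)$, produce a single point $k_0\in K$ with $\norm{\delta_{k_0}\circ T}=\norm{T}$, and finish by applying the hypothesis $\Hom(X,\R)\subseteq \NA(X,\R)$. Where you genuinely diverge is in how compactness of $T$ produces $k_0$. The paper takes a maximizing sequence $(x_n)_{n\in\N}\subseteq B_X$, extracts (by compactness of $T$) a norm limit $y=\lim_n Tx_n$ with $\norm{y}=\norm{T}$, and takes the homomorphism to be evaluation at a point where $y$ attains its maximum, the equality $\norm{y^*\circ T}=\norm{T}$ being witnessed by the sequence $(x_n)$ itself. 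You instead invoke the dual formulation of compactness --- $T^*$ is $w^*$-to-norm continuous on bounded subsets of $C(K)^*$ --- to upgrade the mere $w^*$-lower semicontinuity of $k\mapsto\norm{\delta_k\circ T}$ to genuine continuity, and then maximize this function over the compact space $K$. Each route buys something: the paper's argument is more elementary, using only that $\overline{T(B_X)}$ is norm-compact and extraction of a convergent sequence, while yours needs the Schauder/Arzel\`a--Ascoli-type fact about adjoints of compact operators. Conversely, your argument is insensitive to signs, since you work with $\abs{(\delta_k\circ T)(x)}$ and norms throughout; the paper's intermediate claim that \emph{every} $f\in C(K)$ admits a norm-one lattice homomorphism $x^*$ with $x^*(f)=\norm{f}$ is literally true only when the maximum of $f$ equals $\norm{f}_\infty$ (e.g. when $f\geq 0$), so its proof implicitly requires first replacing $x_n$ by $\abs{x_n}$ (harmless, as $T$ preserves moduli), a reduction your version renders unnecessary.
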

\begin{proof}
Take $\left(x_n\right)_{n \in \N}$ a sequence in $B_X$ such that $\left(\|Tx_n\|\right)_{n \in \N}$ converges to $\|T\|$. Moreover, since $T$ is compact, we can suppose that $\left(Tx_n\right)_{n \in \N}$ is norm convergent to some $y \in Y$. Notice that $\|y\|=\|T\|$.

Now, notice that in any $C(K)$-space we have that, for every $f\in C(K)$, there is $x^* \in \Hom(C(K),\R)$ such that $\|x^*\|=1$ and $x^*(f)=\|f\|$ (just take $x^*$ to be any evaluation functional $\delta_a$ with $a\in K$ an arbitrary point where $f$ attains its maximum). Since $Y$ can be seen as a sublattice of a $C(K)$-space, this property is inherited by $Y$. Thus, there is $y^*\in \Hom(C(K),\R)$ such that $\|y^*\|=1$ and $y^*(y)=\|y\|=\|T\|$. Now, notice that $x^* = y^* \circ T \in \Hom(X, \R) \subseteq \NA(X, \R)$, so there exists $x \in B_X$ such that $x^*(x)=\|x^*\|$. Notice that $x^*(x_n)=y^*(Tx_n)$ converges to $y^*(y)=\|T\|$ and, since $y^* \in S_{Y^*}$, it follows that $\|x^*\|=\|T\|$.  Thus, $x^*(x)=y^*(Tx)=\|T\|$ and $T$ attains its norm at $x$.
\end{proof}
	
The next example shows that, in general, the condition that $Y$ is an abstract $M$ space cannot be dropped.
\begin{example}
\label{ExamCompactNotNA}
Let $T\colon c_0 \longrightarrow \ell_1$ be the compact lattice homomorphism defined by the formula $T(\sum_{n=1}^\infty \lambda_n e_n)= \sum_{n=1}^\infty \frac{\lambda_n}{2^n} e_n$. Then, although $\Hom(c_0, \R) \subseteq \NA(c_0, \R)$ and $T$ is a compact lattice homomorphism, it does not attain its norm. 
\end{example}

\section{Lattice homomorphisms which do not attain their norms}\label{SectionNegativeCases}

Every lattice homomorphism we have been working with so far attains its norm (see Section \ref{SectionPositiveCases} and, in particular, Corollary \ref{reflexive1}). Notice that these cases include all classical Banach lattices. One may wonder whether, in general, \textit{every} lattice homomorphism attains its norm. The class of free Banach lattices, which has arised  during the past few years as an important source of counterexamples when comparing properties of Banach spaces and Banach lattices, provides a suitable setting to answer negatively this question.

\bigskip

Let us recall that if $E$ is a Banach space and $x^* \in E^*$, then the evaluation function $\delta_{x^*} \colon FBL[E] \longrightarrow \R$ is defined by $\delta_{x^*}(f) := f(x^*)$ for every $f \in FBL[E]$.
The aims of this section are threefold. First, we will be interested in answering whether the inclusion $\Hom(X, \R) \subseteq \NA(X, \R)$ holds for an arbitrary Banach lattice $X$. We will see next that this is not the case and we give several concrete examples of Banach spaces $E$ such that there exists a lattice homomorphism on $FBL[E]$ which does not attain its norm (see Corollary \ref{CorollaryMainTheorem}). On the other hand, we try to characterize those lattice homomorphisms on $FBL[E]$ which attain their norm. Namely, we wonder whether $x^* \in \NA(E, \R)$ if and only if $\delta_{x^*} \in \NA(FBL[E],\R)$ holds true.
Finally, as an application, we show that the natural lattice version of the Bishop-Phelps theorem fails in a drastic way.

%As we have studied in Section \ref{SectionPositiveCases}, every Banach lattice homomorphism on any classical Banach lattice attains its norm. So, at this point, it is natural to wonder if every Banach lattice homomorphism on any Banach lattice $X$ always attains its norm. We will see soon that this is not true. Indeed, free Banach lattices provide a nice setting to obtain an example of a Banach lattice homomorphism which does not attain its norm. The main theorem of this section is the following:

\bigskip

We start with the main theorem of the section. This will follow by a combination of Proposition \ref{l_1(A)PropertyP}, Theorem \ref{conjepredu}, and Theorem \ref{Theorem1Complemented} below.

\begin{thm}\label{MainTheorem}
If $E$ is a Banach space which contains a 1-complemented copy of
\begin{enumerate}
\item \label{item1MainTheorem} $\ell_1(A)$ for some infinite set $A$, or
\item \label{item2MainTheorem} an isometric predual of $\ell_1(A)$ for some infinite set $A$,
\end{enumerate}
then there exists $x^* \in E^*$ such that $\delta_{x^*} \notin \NA(FBL[E], \R)$.
\end{thm}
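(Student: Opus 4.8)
The plan is to reduce the statement to a pure existence result about the two model spaces and then transport it. Concretely, writing $Z$ for $\ell_1(A)$ (case (1)) or for the isometric predual of $\ell_1(A)$ (case (2)), it suffices to produce one functional $z^*\in Z^*$ for which $\delta_{z^*}$ fails to attain its norm on $FBL[Z]$, and then to push this failure through the norm-one projection onto the $1$-complemented copy of $Z$ sitting inside $E$.

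First I would set up the transfer principle. Let $P\colon E\to Z$ be a norm-one projection with $P|_Z=\Id_Z$. By functoriality of the free Banach lattice, $P$ induces a lattice homomorphism $\hat P\colon FBL[E]\to FBL[Z]$ with $\|\hat P\|=1$ and $\hat P(\delta_x)=\delta_{Px}$ on generators; checking this on generators and extending by density gives $(\hat P f)(z^*)=f(P^*z^*)$ for all $f\in FBL[E]$ and $z^*\in Z^*$, that is, $\hat P^*(\delta_{z^*})=\delta_{P^*z^*}$. Using positive homogeneity of the elements of $FBL[E]$ and the admissibility of the single test point $x^*/\|x^*\|$ in the defining supremum of the $FBL$-norm, one gets $\|\delta_{x^*}\|=\|x^*\|$ for every $x^*\in E^*$; and since $P|_Z=\Id_Z$ one has $\|P^*z^*\|_{E^*}=\|z^*\|_{Z^*}$. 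Put $x^*:=P^*z^*$, so that $\|\delta_{x^*}\|=\|z^*\|=\|\delta_{z^*}\|$. If $\delta_{x^*}$ attained its norm, then, choosing $f\in B_{FBL[E]}$ with $f\ge 0$ and $f(x^*)=\|x^*\|$ (possible as $\delta_{x^*}\ge 0$), we would have $\delta_{z^*}(\hat P f)=f(P^*z^*)=\|z^*\|$ with $\|\hat P f\|\le 1$, forcing $\delta_{z^*}$ to attain its norm at $\hat P f$. By contraposition, a non-norm-attaining $\delta_{z^*}$ on $FBL[Z]$ yields a non-norm-attaining $\delta_{x^*}$ on $FBL[E]$, which is exactly the desired conclusion.

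It remains to find such $z^*$ for each model $Z$. For $Z=\ell_1(A)$ I would use the identification $FBL[\ell_1(A)]=FBL(A)$ and Proposition \ref{PropSetHomFBL}, so that evaluation functionals are parametrized by $[-1,1]^A=B_{\ell_\infty(A)}$ and $\|\delta_{x^*}\|=\|x^*\|_\infty$. Since $A$ is infinite, I pick $x^*\in B_{\ell_\infty(A)}$ with $\sup_a|x^*(a)|=1$ attained at no coordinate, i.e. a functional on $\ell_1(A)$ that does not attain its norm, and claim $\delta_{x^*}\notin\NA(FBL(A),\R)$. The underlying dichotomy is ``$x^*$ attains its norm on $\ell_1(A)$ if and only if $\delta_{x^*}$ attains its norm''; the easy implication is that $|x^*(a_0)|=1$ makes $\sgn(x^*(a_0))\,\delta_{a_0}$ a norming element. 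For the implication we need, one reduces to $f\ge 0$ with $\|f\|=1$ and $f(x^*)=1$ and derives a contradiction by exploiting that $f$ depends on only countably many coordinates and is uniformly approximable by functions depending on finitely many coordinates (Remark \ref{LemFunctionsDependFinitelyManyCoordinates}); evaluating the norm on finite families of test points supported where $x^*$ is close to its unattained supremum shows that $\|f\|=1$ is incompatible with $f(x^*)=1$. This verification is the technical core of the $\ell_1$ case (Proposition \ref{l_1(A)PropertyP}).

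For $Z$ an isometric predual of $\ell_1(A)$ the same dichotomy should hold, but now there are no coordinates to test against, and this is where I expect the real difficulty. The plan is to invoke the almost-isometric-ideal machinery of Theorems \ref{theo:hbaioper} and \ref{theo:exteaiideales}: fixing the relevant separable data, pass to a separable almost isometric ideal $Z_0$ of $Z$ together with an almost isometric Hahn--Banach extension operator $\varphi\colon Z_0^*\to Z^*$, reduce the problem to a separable $L_1$-predual, build a non-norm-attaining $\delta_{z_0^*}$ there, and lift it back along $\varphi$; the relevant $z^*$ exists because non-reflexivity of $Z$ provides some $z^*\notin\NA(Z,\R)$. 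The principal obstacle is that the $FBL$-norm is defined through \emph{all} finite families of dual functionals and so depends on the global geometry of $Z^*$ rather than on finite-dimensional slices; the crux is therefore to show that norm-attainment of evaluation functionals genuinely survives the almost-isometric reduction between $Z_0$ and $Z$. This is carried out in Theorem \ref{conjepredu}. Combining the transfer principle with the $\ell_1(A)$ case gives (1) and with the predual case gives (2).
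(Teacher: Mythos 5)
Your proposal is correct and is essentially the paper's own proof: the paper obtains Theorem \ref{MainTheorem} precisely by combining the transfer result for $1$-complemented subspaces (Theorem \ref{Theorem1Complemented}) with the norm-attainment dichotomies for $\ell_1(A)$ (Proposition \ref{l_1(A)PropertyP}) and for isometric preduals of $\ell_1(A)$ (Theorem \ref{conjepredu}), together with James' theorem supplying a non-norm-attaining functional in the non-reflexive predual case. Your derivation of the transfer map $\hat P$ from the universal property of $FBL[E]$ is only a cosmetic variant of the paper's proof of Theorem \ref{Theorem1Complemented}, which constructs the same composition operator $f \mapsto f \circ P^*$ by hand (including the Cauchy-sequence argument showing $f\circ P^*\in FBL[F]$), and your remaining sketches defer to the same two dichotomy results the paper invokes.
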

In particular, we have the following concrete examples of Banach spaces $E$ such that there exists a lattice homomorphism in $\Hom(FBL[E], \R)$ which does not attain its norm.

\begin{cor}\label{CorollaryMainTheorem}
There exists $x^* \in E^*$ such that $\delta_{x^*} \notin \NA(FBL[E], \R)$ whenever $E$ is
\begin{enumerate}
\item \label{item1CorollaryMainTheorem} an infinite-dimensional $L_1$-space for some measure $\mu$;
\item \label{item2CorollaryMainTheorem} a separable infinite-dimensional isometric predual of an $L_1$-space. In particular, when $E$ is a $C(K)$-space with $K$ metrizable;
\item \label{item3CorollaryMainTheorem} if $E=X\oplus_a Y$ and there exists $x^* \in X^*$ such that $\delta_{x^*} \notin \NA(FBL[X], \R)$, where $\oplus_a$ denotes an arbitrary absolute sum;
\item \label{item4CorollaryMainTheorem} If $E=X\widehat{\otimes}_\alpha Y$ and there exists $x^* \in X^*$ such that $\delta_{x^*} \notin \NA(FBL[X], \R)$, where $\alpha$ is any uniform cross norm in $X\otimes Y$, and there exists $x^* \in X^*$ such that $\delta_{x^*} \notin \NA(FBL[X], \R)$;
\item \label{item5CorollaryMainTheorem} $E$ is the Lipschitz-free space $\mathcal F(M)$ for a complete metric space $M$ such that $M'\neq \emptyset$ or $M$ contains an infinite ultrametric subspace.
\end{enumerate}

\end{cor}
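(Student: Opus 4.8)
The plan is to deduce Corollary~\ref{CorollaryMainTheorem} from Theorem~\ref{MainTheorem} by, in each case, exhibiting inside $E$ a $1$-complemented subspace of one of the two privileged types ($\ell_1(A)$ or an isometric predual of $\ell_1(A)$), supplemented, for cases (\ref{item3CorollaryMainTheorem}) and (\ref{item4CorollaryMainTheorem}), by a transfer principle. The principle I would isolate first is this: if $X$ is $1$-complemented in $E$, with norm-one projection $P\colon E\to X$ and isometric inclusion $\iota\colon X\hookrightarrow E$ satisfying $P\iota=\Id_X$, then the universal property of the free Banach lattice turns $P$ and $\iota$ into lattice homomorphisms $\widehat{P}\colon FBL[E]\to FBL[X]$ and $\widehat{\iota}\colon FBL[X]\to FBL[E]$ with $\|\widehat{P}\|=\|\widehat{\iota}\|=1$ and $\widehat{P}\,\widehat{\iota}=\Id_{FBL[X]}$. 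Computing on generators one checks the two identities $\delta_{x^*}\circ\widehat{P}=\delta_{x^*\circ P}$ and $\delta_{x^*\circ P}\circ\widehat{\iota}=\delta_{x^*}$, whence $\|\delta_{x^*\circ P}\|=\|\delta_{x^*}\|$. If $\delta_{x^*\circ P}$ attained its norm at some $g\in S_{FBL[E]}$, then $\widehat{P}g$ (normalized) would be an attaining point for $\delta_{x^*}$, since $\|\widehat{P}g\|\le 1$ and $|\delta_{x^*}(\widehat{P}g)|=|\delta_{x^*\circ P}(g)|=\|\delta_{x^*}\|$. Thus $\delta_{x^*}\notin\NA(FBL[X],\R)$ forces $\delta_{x^*\circ P}\notin\NA(FBL[E],\R)$. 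This is exactly the mechanism that, applied to the base computations on $FBL[\ell_1(A)]$ and on $FBL$ of a predual of $\ell_1(A)$, yields Theorem~\ref{MainTheorem}.

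Granting the principle, cases (\ref{item3CorollaryMainTheorem}) and (\ref{item4CorollaryMainTheorem}) reduce to verifying that $X$ is $1$-complemented in $E$. For an absolute sum $E=X\oplus_a Y$ this is automatic: monotonicity and normalization of the absolute norm give $\|(x,0)\|_E=\|x\|_X\le\|(x,y)\|_E$, so the coordinate projection onto $X$ has norm one. For a tensor product $E=X\widehat{\otimes}_\alpha Y$ with $\alpha$ a uniform cross norm, I would fix $y_0\in S_Y$ and $y^*\in S_{Y^*}$ with $y^*(y_0)=1$; then $x\mapsto x\otimes y_0$ is an isometric embedding (as $\alpha$ is a cross norm) and $\Id_X\otimes\,y^*$ is a norm-one projection back onto its range (as $\alpha$ is uniform), so $X$ is again $1$-complemented. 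In both situations the transfer principle applies verbatim.

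The remaining cases (\ref{item1CorollaryMainTheorem}), (\ref{item2CorollaryMainTheorem}) and (\ref{item5CorollaryMainTheorem}) are handled by producing a $1$-complemented copy of $\ell_1(A)$ or of a predual of $\ell_1(A)$ and invoking Theorem~\ref{MainTheorem} directly. For an infinite-dimensional $L_1(\mu)$ (case (\ref{item1CorollaryMainTheorem})) I would choose infinitely many pairwise disjoint sets of positive finite measure; their normalized indicators span an isometric $\ell_1=\ell_1(\N)$, which is $1$-complemented via a conditional-expectation/band projection, so hypothesis (\ref{item1MainTheorem}) is met. For the Lipschitz-free case (\ref{item5CorollaryMainTheorem}), both hypotheses on $M$ are designed to manufacture a $1$-complemented isometric $\ell_1$ in $\mathcal F(M)$: when $M'\neq\emptyset$ one exploits a nontrivial convergent sequence, and when $M$ contains an infinite ultrametric subspace one uses that the free space over such a set carries a $1$-complemented $\ell_1$; in either case hypothesis (\ref{item1MainTheorem}) applies. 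For the highlighted subcase $E=C(K)$ of (\ref{item2CorollaryMainTheorem}) with $K$ infinite metrizable, one builds along a nontrivial convergent sequence $t_n\to t$ disjointly supported Urysohn functions $g_n$ with $g_n(t_n)=1$, whose closed span is isometric to $c_0$ (the predual of $\ell_1(\N)$) and is the range of the norm-one projection $f\mapsto\sum_n f(t_n)g_n$; then hypothesis (\ref{item2MainTheorem}) applies.

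The step I expect to be the main obstacle is case (\ref{item2CorollaryMainTheorem}) in full generality: a separable infinite-dimensional $L_1$-predual $E$ need not itself be a predual of any $\ell_1(A)$, since its dual $L_1(\mu)$ may carry a non-atomic part, and it need not contain a $1$-complemented $c_0$ (as the Gurarij space suggests). To cover such $E$ I would not apply Theorem~\ref{MainTheorem} literally but rather pass through the almost isometric ideal machinery of Theorem~\ref{theo:exteaiideales}, reducing to a well-structured separable piece and locating therein the required predual of $\ell_1(A)$, then transporting non-attainment back along the almost isometric Hahn--Banach extension. Making this separable reduction precise, and checking that the non-attainment survives the almost isometric (rather than exactly isometric) passage, is the technical heart of the argument.
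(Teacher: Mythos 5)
Your overall strategy coincides with the paper's: everything is funneled through the transfer result for $1$-complemented subspaces (this is precisely Theorem~\ref{Theorem1Complemented}) together with the two base cases of Theorem~\ref{MainTheorem}. Your proof of the transfer principle is a genuine variant worth noting: the paper verifies by hand that $f\circ P^*$ lies in $FBL[F]$ and has norm at most one (via a Cauchy-sequence argument), whereas you obtain the same conclusion functorially from the universal property, lifting $P$ and $\iota$ to norm-one lattice homomorphisms $\widehat{P}$, $\widehat{\iota}$ with $\widehat{P}\widehat{\iota}=\Id$ and checking $\delta_{x^*}\circ\widehat{P}=\delta_{x^*\circ P}$, $\delta_{x^*\circ P}\circ\widehat{\iota}=\delta_{x^*}$ on generators; this is cleaner and correct. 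Cases (\ref{item1CorollaryMainTheorem}), (\ref{item3CorollaryMainTheorem}) and (\ref{item4CorollaryMainTheorem}) are handled exactly as in the paper (the paper simply cites \cite[Lemma 5.1.1]{AlbKal}, \cite{Hardtke} and \cite[Chapter 6]{ryan} for the facts you prove inline), and for (\ref{item5CorollaryMainTheorem}) your description amounts to invoking \cite[Theorem 1]{CJ}, which is all the paper does as well.

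The genuine gap is case (\ref{item2CorollaryMainTheorem}). First, the fact you doubt is actually a theorem, and it is exactly what the paper cites: by \cite[Corollary 1.5]{Gasparis}, \emph{every} separable infinite-dimensional isometric $L_1$-predual contains a contractively complemented subspace isometric to $c_0$; in particular this holds for the Gurarij space, so your ``as the Gurarij space suggests'' objection is unfounded, and the corollary follows in one line from Theorem~\ref{MainTheorem}(\ref{item2MainTheorem}). Your proposed substitute --- a reduction through the ai-ideal machinery of Theorem~\ref{theo:exteaiideales} --- is left entirely as a sketch: you yourself flag that transporting non-attainment across an \emph{almost} isometric (not isometric) extension operator is ``the technical heart,'' and you do not carry it out, so as written case (\ref{item2CorollaryMainTheorem}) is not proved. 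Second, even your explicit construction for the subcase $E=C(K)$, $K$ infinite metrizable, is wrong. The formula $Pf=\sum_n f(t_n)g_n$ does not map into $C(K)$: for $K=\N\cup\{\infty\}$, $g_n=\chi_{\{n\}}$ and $f\equiv\mathbf{1}$, the image is the sequence constantly $1$ on $\N$ but $0$ at $\infty$, which is not continuous. Worse, no repair keeping your copy of $c_0$ is possible: if $Z$ is the closed span of disjointly supported functions $g_n$ with $0\le g_n\le 1$, $g_n(t_n)=1$, and $P\colon C(K)\to Z$ is any projection, write $P\mathbf{1}=\sum_n y_ng_n$ with $y_n\to 0$, pick $n$ with $|y_n|<\eps$, and test $x=\mathbf{1}-2g_n$, which has $\|x\|=1$ while $Px=P\mathbf{1}-2g_n$ has $n$-th coefficient $y_n-2$; hence $\|P\|\ge 2-\eps$, so $\|P\|\ge 2$. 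A $1$-complemented isometric $c_0$ does exist, but it must be built from \emph{sign-oscillating} functions: take disjointly supported $g_n$ with $g_n(t_n)=1$, $g_n(s_n)=-1$, $\|g_n\|=1$, where $t_n,s_n\to t$, and use $Pf=\sum_n \tfrac{1}{2}\bigl(f(t_n)-f(s_n)\bigr)g_n$; continuity of $f$ at $t$ makes the coefficients vanish at infinity, so $Pf$ lies in the closed span, and $\|P\|\le 1$, $Pg_m=g_m$. With either this construction or the citation of \cite{Gasparis}, case (\ref{item2CorollaryMainTheorem}) closes; without one of them, your argument for it fails.
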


\begin{proof}
(\ref{item1CorollaryMainTheorem}) follows from the fact that $\ell_1$ is 1-complemented in any infinite-dimensional $L_1(\mu)$ space for every $\mu$ (see \cite[Lemma 5.1.1]{AlbKal}).

(\ref{item2CorollaryMainTheorem}) follows from the fact that every separable infinite-dimensional isometric predual of an $L_1$-space contains a $1$-complemented subspace isometric to $c_0$ (see  \cite[Corollary 1.5]{Gasparis}).

%In order to prove \eqref{item2CorollaryMainTheorem} let $X$ be a separable Banach space such that $X^*=L_1(\mu)$ for certain $\mu$. Now we have two possibilities:
%
%\begin{enumerate}
%\item If $X^*$ is separable it follows that $X^*=\ell_1(J)$ for certain countable $J$ (see the beginning of the proof of Theorem \ref{conjepredu}), and so the result follows by Theorem \ref{MainTheorem}. 
%
%\item If $X^*$ is non-separable, then by \cite[Corollary 1.3]{Gasparis} then $X$ contains an isometric and $1$-complemented copy of $C(\Delta)$, where $\Delta$ denotes the Cantor set. Pick any convergent sequence $\{x_n\}$ and denote $H:=\{x_n\}\cup\{\lim x_n\}\subseteq \Delta$ and note that, by \cite[Theorem 5.29]{FHHMZ}, $C(\Delta)$ contains an isometric and $1$-complemented copy of $C(H)$. So $X$ contains an isometric and $1$-complemented copy of $C(H)$ and notice that $C(H)^*=\ell_1$ by \cite[Lemma 12.25 and Theorem 12.28]{Fabian}, so the result follows again by Theorem \ref{MainTheorem}.
%\end{enumerate}

(\ref{item3CorollaryMainTheorem}) follows because, if $E=X\oplus_a Y$ then $X$ is $1$-complemented in $E$ (see e.g. \cite{Hardtke} and references therein for background in absolute sums), so the result follows by Theorem \ref{Theorem1Complemented}.

(\ref{item4CorollaryMainTheorem}) follows because, if $E=X\widehat{\otimes}_\alpha Y$ then $X$ is $1$-complemented in $E$ (see e.g. \cite[Chapter 6]{ryan} and references therein for background on cross norms in tensor products), so the result follows by Theorem \ref{Theorem1Complemented}.

Finally (\ref{item5CorollaryMainTheorem}) follows by \cite[Theorem 1]{CJ} and Theorem \ref{Theorem1Complemented}.\end{proof}

In order to prove Theorem \ref{MainTheorem}, we need some preliminary results. We will be using the following lemma with no explicit reference from now on.

\begin{lem} 
\label{PropNormFree}	
Let $E$ be a Banach space. If $x^* \in E^*$, then $\| \delta_{x^*}\| = \|x^*\|$.
\end{lem}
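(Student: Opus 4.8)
The plan is to establish the two inequalities $\|\delta_{x^*}\| \le \|x^*\|$ and $\|\delta_{x^*}\| \ge \|x^*\|$ directly from the explicit formula for the $FBL[E]$-norm recalled in Section~\ref{section:notation}, after disposing of the trivial case $x^*=0$. Both bounds are obtained simply by unwinding the definition of $\|\cdot\|_{FBL[E]}$ and using the positive homogeneity of the functions in $FBL[E]$; no deep machinery is needed.

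For the upper bound, assume $x^*\neq 0$ and let $f\in FBL[E]$. I would apply the defining supremum for $\|f\|_{FBL[E]}$ to the single-element family consisting only of $x^*/\|x^*\|$. This family is admissible, since the constraint $\sup_{x\in B_E}\abs{(x^*/\|x^*\|)(x)}=1\le 1$ holds, and therefore $\|f\|_{FBL[E]}\ge \abs{f(x^*/\|x^*\|)}$. Using that every function in $FBL[E]$ is positively homogeneous, i.e.\ $f(\lambda y^*)=\lambda f(y^*)$ for $\lambda\ge 0$, with $\lambda=\|x^*\|$ and $y^*=x^*/\|x^*\|$, I get $f(x^*)=\|x^*\|\,f(x^*/\|x^*\|)$, so that $\abs{\delta_{x^*}(f)}=\abs{f(x^*)}\le \|x^*\|\,\|f\|_{FBL[E]}$. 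Taking the supremum over $f$ in the unit ball of $FBL[E]$ yields $\|\delta_{x^*}\|\le \|x^*\|$.

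For the lower bound, I would use that the canonical map $\phi\colon E\longrightarrow FBL[E]$, $\phi(x)=\delta_x$, is a linear isometry (as recorded in Section~\ref{section:notation}), so that $\delta_x\in B_{FBL[E]}$ whenever $x\in B_E$. Since $\delta_{x^*}(\delta_x)=\delta_x(x^*)=x^*(x)$, evaluating $\delta_{x^*}$ on these test elements gives $\|\delta_{x^*}\|\ge \sup_{x\in B_E}\abs{x^*(x)}=\|x^*\|$. Combining the two inequalities gives the equality. The only points demanding any care — and hence the closest thing to an obstacle — are verifying that the single-element family $\{x^*/\|x^*\|\}$ meets the admissibility constraint in the norm formula and invoking positive homogeneity with the correct nonnegative scalar; both are immediate once stated explicitly.
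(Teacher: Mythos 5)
Your proposal is correct and follows essentially the same argument as the paper: the upper bound via the admissible single-element family $\{x^*/\|x^*\|\}$ together with positive homogeneity, and the lower bound by testing $\delta_{x^*}$ against the elements $\delta_x$ with $x\in B_E$. The only cosmetic difference is that you explicitly cite the isometry of the canonical embedding $\phi$ to justify $\delta_x\in B_{FBL[E]}$, a fact the paper uses implicitly.
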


\begin{proof} If $x^* = 0$, then for every $f \in FBL[E]$, we have that $\delta_{x^*}(f) = f(0) = 0$, and in consequence $\delta_{x^*} = 0$.
	
Let $x^* \not= 0$ and $f \in FBL[E]$. It follows from the definition of the norm in $FBL[E]$ and the fact that every $f\in FBL[E]$ is positively homogeneous that, since $\ds \left(\frac{x^*}{\|x^*\|}\right)(x) \leq 1$ for every $x \in B_E$, 
\begin{equation*}
\|f\|_{FBL[E]} \geq \left| f \left( \frac{x^*}{\|x^*\|} \right) \right| = \frac{1}{\|x^*\|} |f(x^*)|,
\end{equation*}
which implies that $|f(x^*)| \leq \|x^*\| \|f\|_{FBL[E]}$. So,
\begin{equation*}
\| \delta_{x^*}\| = \sup_{f \in B_{FBL[E]}}  |\delta_{x^*}(f)| = \sup_{f \in B_{FBL[E]}} |f(x^*)| \leq \|x^*\|. 
\end{equation*}
On the other hand, for every $x \in B_E$, we have
\begin{equation*}
\| \delta_{x^*}\| \geq | \delta_{x^*} (\delta_x)| = |\delta_x(x^*)| = |x^*(x)|,
\end{equation*}
which implies that $\|\delta_{x^*}\| \geq \|x^*\|$. 	
\end{proof} 

%An analogous argument shows that if $A$ is a non-empty set, then for every $x^* \in [-1,1]^A$ we have that $\|x^*\| := \sup \{| x^*(a)| : a \in A \} = \| \delta_{x^*}\|$.

%As a particular case, taking $E = \ell_1(A)$ for a non-empty set $A$, it follows that in the free Banach lattice generated by  the set $A$, for every $x^* \in [-1,1]^A$ we have that $\|x^*\| = \sup \{| x^*(a)| : a \in A \} = \| \delta_{x^*}\|$.

From the proof of Lemma \ref{PropNormFree} together with the James theorem, we can extract the following consequence.

\begin{prop}\label{PropDeltaNonNormAttaining} 
Let $E$ be a Banach space. If $x^*\in \NA(E, \R)$, then $\delta_{x^*} \in \NA(FBL[E], \R)$.
In particular, if $E$ is reflexive, then every lattice homomorphism on $FBL[E]$ attains its norm.
\end{prop}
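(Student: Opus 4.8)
The plan is to exploit the lower-bound computation already carried out in the proof of Lemma \ref{PropNormFree}, which in fact exhibits an explicit candidate for the point at which $\delta_{x^*}$ attains its norm. First I would recall that the canonical map $\phi\colon E\To FBL[E]$, $\phi(x)=\delta_x$, is a linear isometry, so that $\|\delta_x\|_{FBL[E]}=\|x\|$ for every $x\in E$; in particular $\delta_{x_0}\in S_{FBL[E]}$ whenever $x_0\in S_E$. Assuming $x^*\in\NA(E,\R)$, I then fix $x_0\in S_E$ with $|x^*(x_0)|=\|x^*\|$.

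The key identity is that, by the very definitions of the two evaluation functions, $\delta_{x^*}(\delta_{x_0})=\delta_{x_0}(x^*)=x^*(x_0)$. Combining this with Lemma \ref{PropNormFree}, which gives $\|\delta_{x^*}\|=\|x^*\|$, I obtain $|\delta_{x^*}(\delta_{x_0})|=|x^*(x_0)|=\|x^*\|=\|\delta_{x^*}\|$, so that $\delta_{x^*}$ attains its norm at the unit vector $\delta_{x_0}$. This is precisely the inequality $\|\delta_{x^*}\|\geq|\delta_{x^*}(\delta_x)|=|x^*(x)|$ appearing in the proof of Lemma \ref{PropNormFree}, now saturated by the optimal choice $x=x_0$. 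For the reflexive case I would invoke James's theorem: reflexivity of $E$ means $\NA(E,\R)=E^*$. By Proposition \ref{PropSetHomFBL}(ii) every lattice homomorphism on $FBL[E]$ has the form $\delta_{x^*}$ for some $x^*\in E^*$, and the first part shows each such $\delta_{x^*}$ is norm-attaining; hence $\Hom(FBL[E],\R)\subseteq\NA(FBL[E],\R)$.

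I do not expect any genuine obstacle here: the whole argument reduces to the evaluation identity $\delta_{x^*}(\delta_{x_0})=x^*(x_0)$ together with reading off the norm-attaining witness $\delta_{x_0}$ from the lower bound already established in Lemma \ref{PropNormFree}. The delicate direction — whether $\delta_{x^*}\in\NA(FBL[E],\R)$ forces $x^*\in\NA(E,\R)$ — is not needed for this statement and is exactly what the main theorem of the section is designed to address.
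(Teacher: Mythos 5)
Your proposal is correct and is precisely the argument the paper intends: its one-line proof (``From the proof of Lemma \ref{PropNormFree} together with the James theorem, we can extract the following consequence'') is exactly your reading, namely that the lower bound $\Vert\delta_{x^*}\Vert\geq\vert\delta_{x^*}(\delta_{x})\vert=\vert x^*(x)\vert$ from Lemma \ref{PropNormFree} is saturated at the witness $\delta_{x_0}\in S_{FBL[E]}$ (using that $\phi$ is an isometry), while James's theorem plus Proposition \ref{PropSetHomFBL}(ii) handles the reflexive case. You have simply made explicit the steps the paper leaves implicit; nothing further is needed.
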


%By Proposition \ref{PropNormFree} we have $\|\delta_{x^*}\|=\|x^*\|$. If $x^*\in \NA(E, \R)$, then there is $x \in B_E$ such that $x^*(x)=\|x^*\|$. Thus,
%$$ \| \delta_{x^*}\|=\|x^*\| =x^*(x)=\delta_{x^*}(\delta_x) $$ and, since $\|\delta_x\|_{FBL[E]}=\|x\|$, we conclude that $\delta_{x^*} \in \NA(FBL[E], \R)$.

As we have mentioned before, we are interested in the converse of Proposition \ref{PropDeltaNonNormAttaining}, which we do not know if it holds true for every Banach space $E$. Let us highlight it as a conjecture:

\begin{con}\label{MainConjecture}
Let $E$ be a Banach space. Then, $x^* \in \NA(E, \R)$ if and only if $\delta_{x^*} \in \NA(FBL[E], \R)$.
\end{con}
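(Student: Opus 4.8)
The plan is to prove only the nontrivial implication, since the forward direction ``$x^*\in\NA(E,\R)\Rightarrow\delta_{x^*}\in\NA(FBL[E],\R)$'' is exactly Proposition \ref{PropDeltaNonNormAttaining}. So I would assume $\delta_{x^*}\in\NA(FBL[E],\R)$ and try to produce an attaining point for $x^*$ in $B_E$. Normalising via Lemma \ref{PropNormFree}, one may take $\|x^*\|=\|\delta_{x^*}\|=1$ and pick $f\in S_{FBL[E]}$ with $f(x^*)=1$. Replacing $f$ by $f\vee 0$ (which lies in $FBL[E]$, has $\| f\vee 0 \|\le\| \, |f| \, \|=\|f\|=1$, and still satisfies $(f\vee0)(x^*)=1$) we may assume $f\ge 0$. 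Testing the norm formula for $\|f\|_{FBL[E]}$ against a single functional $y^*\in B_{E^*}$ gives $f(y^*)\le 1$ for every $y^*\in B_{E^*}$, so $x^*$ is a maximum of the $w^*$-continuous function $f$ on the $w^*$-compact set $B_{E^*}$.

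The first concrete lever I would extract from the norm formula is a concavity-type estimate at $x^*$. Applying the definition of $\|f\|_{FBL[E]}$ to the pair $y_1^*=\tfrac12 x^*+g$, $y_2^*=\tfrac12 x^*-g$ and using the pointwise identity $|a+b|+|a-b|=2\max(|a|,|b|)$, the admissibility constraint $\sup_{x\in B_E}(|y_1^*(x)|+|y_2^*(x)|)\le 1$ reduces to $\|g\|\le\tfrac12$; since $f\ge 0$ is positively homogeneous, the bound $f(y_1^*)+f(y_2^*)\le 1$ rescales to
\begin{equation*}
f(x^*+h)+f(x^*-h)\le 2=2f(x^*)\qquad\text{for all }h\in B_{E^*}.
\end{equation*}
Thus $x^*$ is a point of midpoint concavity for $f$, and an analogous computation with $n$ functionals $y_i^*=\tfrac1n x^*+g_i$, $\sum_i g_i=0$, should yield finer $n$-point inequalities. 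The idea is to use these, together with the positive homogeneity and the $w^*$-continuity of $f$ on $B_{E^*}$, to read off a ``tangent linear behaviour'' of $f$ along the ray through $x^*$, i.e. a functional on $E^*$ witnessing that the supremum defining $\|x^*\|$ is genuinely attained.

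At this point one can always produce, by $w^*$-compactness of $B_{E^{**}}$, a norming functional $x^{**}\in S_{E^{**}}$ with $x^{**}(x^*)=\|x^*\|$, but this is automatic and carries no information; the whole content of the conjecture is the \emph{descent} of the attaining functional from $E^{**}$ to $E$, and this is the step I expect to be the genuine obstacle, since for arbitrary non-reflexive $E$ there is no soft reason for a norming element of $E^{**}$ to live in $E$. The required rigidity must come from the fact that $f$ is an element of $FBL[E]$, not of $FBL[E^{**}]$: by Remark \ref{LemFunctionsDependFinitelyManyCoordinates} it depends on only countably many coordinates $\{x_k\}\subseteq E$, so the candidate witness ought to be expressible through the values $(x^*(x_k))_k$ and hence forced back into $E$. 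To exploit this I would localise with the almost isometric ideal machinery of Theorems \ref{theo:hbaioper} and \ref{theo:exteaiideales}, passing to a separable ai-ideal $Z\subseteq E$ containing $\{x_k\}$ together with an almost isometric Hahn--Banach extension operator controlling $x^*$, and transfer the attainment problem to $Z$, where the candidate is confined to $Z\subseteq E$ rather than to $Z^{**}$. This is exactly why only the special cases of Theorem \ref{MainTheorem} and Corollary \ref{CorollaryMainTheorem} can currently be settled: when $E$ is, or is $1$-complemented over, $\ell_1(A)$ or an isometric $\ell_1(A)$-predual, the disjointness and support structure pins the witness down to a single coordinate, while the reflexive case of Proposition \ref{PropDeltaNonNormAttaining} is precisely the case $E=E^{**}$ in which the descent is free.
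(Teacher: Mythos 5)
There is a genuine gap, and it is worth being precise about where it lies. The statement you are addressing is posed in the paper as an \emph{open conjecture}, not a theorem: the paper establishes only the easy implication (Proposition \ref{PropDeltaNonNormAttaining}), the fact that the conjecture is separably determined (Theorem \ref{falloconjec}), and the validity of the conjecture for spaces with property $(P)$ of Definition \ref{DefinitionPropertyP} (Lemma \ref{LemmaPropertyP}), in particular for $\ell_1(A)$ and for isometric preduals of $\ell_1(\Gamma)$ (Proposition \ref{l_1(A)PropertyP} and Theorem \ref{conjepredu}). Your proposal, by its own admission, also does not close the hard implication. The steps you do carry out are correct: the reduction to the backward direction, the normalization via Lemma \ref{PropNormFree}, the replacement of $f$ by $f\vee 0$ (which is legitimate, since lattice operations in $FBL[E]$ are pointwise and $\|f\vee 0\|\leq \|\,|f|\,\|=\|f\|$), the bound $f(y^*)\leq 1$ on $B_{E^*}$, and the midpoint inequality $f(x^*+h)+f(x^*-h)\leq 2f(x^*)$ for $\|h\|\leq 1$. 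But from that point on the argument is a program (``read off a tangent linear behaviour'', ``the candidate witness ought to be expressible through the values $(x^*(x_k))_k$'') rather than a proof, and the unresolved step is exactly the content of the conjecture.

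Two more specific criticisms. First, the ai-ideal machinery cannot perform the descent you hope for: in the paper it yields only Theorem \ref{falloconjec}, i.e.\ that a \emph{counterexample} to the conjecture passes down to a separable ai-ideal $Z$; transferring the attainment problem to a separable $Z$ containing the countably many coordinates of $f$ leaves you with the same problem on an arbitrary non-reflexive separable space, where norming functionals still have no soft reason to live in $Z$ rather than $Z^{**}$. Second, in its positive cases the paper does not argue from concavity but contrapositively: property $(P)$ supplies, for a non-attaining $x^*$, elements $y^*$ with $|x^*(x)|+|y^*(x)|\leq\|x^*\|$ for all $x\in B_E$ such that $x^*$ lies in the $w^*$-closure of $\R^+C$; then $w^*$-continuity of $f$ on $B_{E^*}$ forces $f(y^*)>0$ for some such $y^*$, and the norm formula gives $\|f\|_{FBL[E]}\geq |f(x^*)|+|f(y^*)|>1$, a contradiction. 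Your two-point inequality and the paper's set $C$ are both instances of testing the norm formula on finitely many jointly admissible functionals, and the paper shows this mechanism cannot settle the general case: property $(P)$ fails whenever $E$ is non-reflexive and $E^*$ is strictly convex, because then some non-attaining $x^*\in S_{E^*}$ is an extreme point and $C=\{0\}$. So any complete proof of Conjecture \ref{MainConjecture} (if it is true at all) must involve an idea beyond the family of estimates you propose.
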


First, we prove that Conjecture \ref{MainConjecture} is separably determined, that is, if it holds for every separable Banach space, then it does for every Banach space. For the better understanding of the proof of it, we send the reader to the very last part of Section \ref{section:notation}.

\begin{thm}\label{falloconjec}
Let $E$ be a Banach space such that Conjecture \ref{MainConjecture} does not hold. Then, there exists a separable ai-ideal $Z$ in $E$ such that Conjecture \ref{MainConjecture} does not hold.
\end{thm}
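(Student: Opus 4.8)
The plan is to prove the statement directly: assuming Conjecture \ref{MainConjecture} fails for $E$, I will pin down a single witnessing functional, transport it to a carefully chosen separable almost isometric ideal via Theorem \ref{theo:exteaiideales}, and check that it still witnesses the failure there. By Proposition \ref{PropDeltaNonNormAttaining} the failure can only come from the nontrivial implication, so I may fix $x^* \in S_{E^*}$ with $\delta_{x^*} \in \NA(FBL[E],\R)$ but $x^* \notin \NA(E,\R)$, and pick $f \in B_{FBL[E]}$ with $f(x^*) = \|\delta_{x^*}\| = \|x^*\| = 1$. Since $FBL[E]$ is the closed sublattice generated by the evaluations $\{\delta_x : x \in E\}$ (see Remark \ref{LemFunctionsDependFinitelyManyCoordinates}), I would write $f = \lim_k f_k$ in $FBL[E]$, where each $f_k = p_k(\delta_{x_1^{(k)}},\ldots,\delta_{x_{m_k}^{(k)}})$ is a lattice-linear combination of finitely many evaluations and thus depends only on a finite coordinate set $F_k \subseteq E$. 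I then set $Y := \overline{\spann}\big(\bigcup_k F_k\big)$, a separable subspace, and apply Theorem \ref{theo:exteaiideales} to $Y$ and $W := \spann\{x^*\}$ to obtain a separable ai-ideal $Z \supseteq Y$ and an almost isometric Hahn--Banach extension operator $\varphi \colon Z^* \longrightarrow E^*$ with $x^* \in \varphi(Z^*)$.

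Next I would identify the witness downstairs. Writing $x^* = \varphi(z^*)$ and using $\varphi(w^*)(z) = w^*(z)$ for all $w^* \in Z^*,\ z \in Z$ (Theorem \ref{theo:hbaioper}), the functional $\varphi(w^*)$ is a norm-preserving extension of $w^*$; in particular $z^* = x^*|_Z$ and $\|z^*\| = \|x^*\| = 1$. The failure of norm attainment transfers immediately: if some $z_0 \in S_Z$ had $|z^*(z_0)| = 1$, then $z_0 \in S_E$ and $|x^*(z_0)| = 1 = \|x^*\|$, contradicting $x^* \notin \NA(E,\R)$; hence $z^* \notin \NA(Z,\R)$.

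The heart of the argument is to produce a norming element for $\delta_{z^*}$ inside $FBL[Z]$. I would consider the linear composition map $\Phi(h) := h \circ \varphi$, sending a function on $E^*$ to one on $Z^*$. The key estimate is that $\Phi$ does not increase the free-lattice norm: for $w_1^*,\ldots,w_n^* \in Z^*$, rewriting both quantities as maxima over signs $\varepsilon \in \{\pm 1\}^n$ and using that $\varphi$ is a linear isometry,
\begin{equation*}
\sup_{x \in B_E} \sum_{i=1}^n |\varphi(w_i^*)(x)| = \max_{\varepsilon} \Big\| \sum_{i=1}^n \varepsilon_i \varphi(w_i^*) \Big\| = \max_{\varepsilon} \Big\| \sum_{i=1}^n \varepsilon_i w_i^* \Big\| = \sup_{z \in B_Z} \sum_{i=1}^n |w_i^*(z)|,
\end{equation*}
so the defining suprema for $\|\cdot\|_{FBL[E]}$ and $\|\cdot\|_{FBL[Z]}$ match along $\varphi$, giving $\|\Phi(h)\|_{FBL[Z]} \leq \|h\|_{FBL[E]}$. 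For each approximant $f_k$, all of whose coordinates lie in $Y \subseteq Z$, the identity $\varphi(w^*)(x_i^{(k)}) = w^*(x_i^{(k)})$ shows pointwise that $\Phi(f_k) = p_k(\delta_{x_1^{(k)}},\ldots,\delta_{x_{m_k}^{(k)}})$ computed inside $FBL[Z]$, hence $\Phi(f_k) \in FBL[Z]$. Since $\Phi$ is norm-decreasing and $f_k \to f$, the sequence $\Phi(f_k)$ is Cauchy and, $FBL[Z]$ being complete, its limit $g := f \circ \varphi$ lies in $FBL[Z]$ with $\|g\|_{FBL[Z]} \leq 1$. Finally $g(z^*) = f(\varphi(z^*)) = f(x^*) = 1 = \|\delta_{z^*}\|$, so $\delta_{z^*}$ attains its norm at $g$; thus $z^*$ witnesses the failure of Conjecture \ref{MainConjecture} for the separable ai-ideal $Z$.

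I expect the main obstacle to be precisely the membership $g \in FBL[Z]$: a priori $f \circ \varphi$ is only a positively homogeneous function of finite free-lattice norm, whereas $FBL[Z]$ is the (generally proper) sublattice generated by the evaluations rather than all such functions. The device that overcomes this is the deliberate choice of $Y$ to absorb the coordinates of a \emph{generating} sequence of approximants to $f$, which forces each $\Phi(f_k)$ to be an honest lattice polynomial of evaluations in $FBL[Z]$; completeness of $FBL[Z]$ together with the norm-decreasing property of $\Phi$ then delivers $g$ in the limit. A secondary point to treat with care is that $\varphi$ need not be $w^*$-to-$w^*$ continuous, so $\Phi$ need not arise from applying the functor $FBL[\cdot]$ to an operator $E \longrightarrow Z$; the explicit approximation argument is exactly what sidesteps the need for any such factorization.
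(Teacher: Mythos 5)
Your proposal is correct and follows essentially the same route as the paper's own proof: the same witnessing pair $(x^*,f)$, the same choice of a separable subspace $Y$ absorbing the coordinates of the approximants to $f$, the same application of Theorem \ref{theo:exteaiideales} with $W$ generated by $x^*$, and the same composition map $h \mapsto h \circ \varphi$ with the sign-trick norm estimate and the Cauchy/completeness argument to place $g = f\circ\varphi$ in $FBL[Z]$. The only (harmless) difference is organizational: you fix the approximating sequence first and define $Y$ from it, whereas the paper invokes Remark \ref{LemFunctionsDependFinitelyManyCoordinates} to choose $Y$ and then re-selects the approximants.
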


\begin{proof} Let us assume that Conjecture \ref{MainConjecture} does not hold. Then, there exists $x^*\notin \NA(E, \mathbb{R})$ with $\|x^*\| = 1$ and $f\in S_{FBL[E]}$ such that $\delta_{x^*}(f) = f(x^*)=1$. We will prove that there exist a separable ai-ideal $Z$ in $E$, $z^* \in Z^*$, and $g \in FBL[Z]$ such that $z^* \not\in \NA(Z, \R)$ and $\delta_{z^*}(g) = \|\delta_{z^*}\|$. This is a combination of Steps 1, 2, and 3 below.

\vspace{0.2cm}	
\noindent	
{\bf Step 1}: There exists a separable ai-ideal $Z$ in $E$ and $z^* \in S_{Z^*}$ such that $z^* \not\in \NA(Z, \R)$.	
\noindent	
\vspace{0.2cm}		
	
By Remark \ref{LemFunctionsDependFinitelyManyCoordinates}, there exists a separable subspace $Y \subseteq E$ such that $f(x^*)=f(y^*)$ whenever $x^*\vert_{Y}=y^*\vert_{Y}$. Now, using the notation of Theorem \ref{theo:exteaiideales}, let us set $W := \{x^*\} \subseteq E^*$. Then, we can find a separable ai-ideal $Z$ in $E$ with $Y\subseteq Z\subseteq E$ and an almost isometric Hahn-Banach extension operator $\varphi\colon Z^*\longrightarrow E^*$ with $x^*\in \varphi(Z^*)$. Therefore, we have that $x^*=\varphi(z^*)$ for some $z^*\in Z^*$. In particular, $\|z^*\| \geq \|\varphi(z^*)\| = \|x^*\| = 1$. On the other hand, since $\varphi(z^*)(z) = z^*(z)$ for every $z \in Z$ and $z^* \in Z^*$, and $\varphi$ is an isometry, for every $z\in B_Z$, we get that
\begin{equation*} 
z^*(z)=\varphi(z^*)(z)=x^*(z)<\Vert x^*\Vert=\Vert z^*\Vert.
\end{equation*}
This gives that $\|z^*\| \leq 1$ and it cannot attain its norm.

\vspace{0.2cm}	
\noindent	
{\bf Step 2}: There exists $g: Z^* \longrightarrow \R$ with $\|g\|_{FBL[Z]} = 1$ such that $\delta_{z^*}(g) = \|\delta_{z^*}\|  = 1$.	
\noindent	
\vspace{0.2cm}

Define $g:=f\circ \varphi\colon Z^*\longrightarrow \mathbb R$. Let us prove that $\Vert g\Vert_{FBL[Z]} = \Vert f\Vert_{FBL[E]} = 1$ and that $g$ attains its norm at $z^*$. Indeed, let $z_1^*,\ldots, z_n^*\in Z^*$ be such that $\sup_{z\in B_Z} \sum_{i=1}^n \vert z_i^*(z)\vert\leq 1$. This is equivalent to the fact that $\Vert \sum_{i=1}^n \xi_i z_i^*\Vert_{Z^*}\leq 1$ holds for every choice of signs $\xi_i\in \{-1,1\}$. Given any choice of signs $\xi_1,\ldots, \xi_n\in \{-1,1\}$, we get that
\begin{equation*} 
\left\| \sum_{i=1}^n \xi_i \varphi(z_i^*) \right\|_{E^*} = \left\| \varphi \left( \sum_{i=1}^n \xi_i z_i^* \right) \right\|_{E^*} \leq \Vert \varphi \Vert \left\| \sum_{i=1}^n \xi_i z_i^* \right\|_{Z^*} \leq 1.
\end{equation*} 
Since $\xi_1, \ldots, \xi_n$ are arbitrary, we deduce that $\sup_{x\in B_E} \sum_{i=1}^n \vert \varphi(z_i^*)(x)\vert\leq 1$. Now,
\begin{equation*} 
\sum_{i=1}^n \vert g(z_i^*)\vert=\sum_{i=1}^n | f(\varphi(z_i^*)| \leq \Vert f\Vert_{FBL[E]}=1.
\end{equation*} 
This proves that $\Vert g\Vert_{FBL[Z]} \leq 1$. On the other hand, 
\begin{equation*} 
\delta_{z^*}(g) = g(z^*)=f(\varphi(z^*))=f(x^*)=1. 
\end{equation*}

\vspace{0.2cm}	
\noindent	
{\bf Step 3}: The function $g$ in Step 2 belongs to $FBL[Z]$.	
\noindent	
\vspace{0.2cm}

%But this is true because $f$ is generated by elements of the form $\delta_y$ with $y\in Y\subseteq Z$ and because of the fact that, given any $z\in Z$ and any $z^*\in Z^*$ we get that
 
We will prove that $g$ is in the closed vector lattice generated by the $\delta_z$'s with $z\in Z$. First, let us notice that for every $z\in Z$ and $z^*\in Z^*$, we have that
\begin{equation} \label{varphi} 
(\delta_z\circ \varphi)(z^*)=\varphi(z^*)(z)=z^*(z)=\delta_z (z^*).
\end{equation} 
This means that the function $\phi\colon \mathbb R^{E^*}\longrightarrow \mathbb R^{Z^*}$ given by
\begin{equation*} 
\phi(h)=h\circ \varphi
\end{equation*} 
satisfies that if $\Vert h\Vert_{FBL[E]}<\infty$, then $\Vert \phi(h)\Vert_{FBL[Z]}\leq \Vert h\Vert_{FBL[E]}$. Also, by using (\ref{varphi}), we have that $\phi(\delta_z)=\delta_z \circ \varphi = \delta_z \in FBL[Z]$ holds for every $z\in Z$. Furthermore, by definition,  $\phi$ is linear and preserves suprema and infima. This implies that if $h$ is an element in the vector lattice generated by $\{\delta_z: z\in Z\}$, then $\phi(h)=h\circ\varphi\in FBL[Z]$.
%, in other words, if $h,l\in \mathbb R^{E^*}$,
%$$\phi(h\vee l)(z^*)=(h\vee l)(\varphi(z^*))=(h\circ\varphi)(z^*)\vee (l\circ \varphi)(z^*)$$
%holds for every $z^*\in Z^*$, so $\phi(h\vee l)=\phi(h)\vee \phi(l)$. 

Now, since $f$ depends on the coordinates of $Y \subseteq Z$, we can take a sequence $(f_n)_{n \in \mathbb{N}}$ in the vector lattice generated by $\{\delta_y:y\in Y\}$ such that $f_n\rightarrow f$ in $FBL[E]$. Since, for every $n\in\mathbb N$, $f_n$ is in the vector lattice generated by $\{\delta_z:z\in Z\}$, we get that $\phi(f_n)=f_n\circ \varphi\in FBL[Z]$ holds for every $n\in\mathbb N$. Let us notice that $f_n\circ \varphi$ is a Cauchy sequence in $FBL[Z]$. Indeed, given $n,k\in\mathbb N$, we get that
\begin{equation*} 
\Vert (f_n\circ\varphi)-(f_k\circ\varphi)\Vert_{FBL[Z]}=\Vert(f_n-f_k)\circ\varphi\Vert_{FBL[Z]}\leq \Vert f_n-f_k\Vert_{FBL[E]},
\end{equation*} 
from where the Cauchy condition follows since $(f_n)_{n \in \N} \subseteq FBL[E]$ is Cauchy. By completeness, $f_n\circ\varphi\rightarrow \tilde{g}$ for some $\tilde{g}\in FBL[Z]$. To finish the proof, we prove that $g=\tilde{g}$. To this end, let us see that $g(z^*)=\tilde{g}(z^*)$ holds for every $z^*\in Z^*$. Given $z^*\in Z^*$, we get that
$$\tilde{g}(z^*)=\lim_n (f_n\circ\varphi)(z^*)=\lim_n f_n(\varphi(z^*))=f(\varphi(z^*))=(f\circ\varphi)(z^*),$$
where we have used both that $f_n\circ \varphi\rightarrow \tilde{g}$ in $FBL[Z]$ and that $f_n\rightarrow f$ in $FBL[E]$. Hence, $g=f\circ\varphi=\tilde{g}\in FBL[Z]$, as desired.
\end{proof}

\bigskip

In what follows, we are giving a wide list of Banach spaces that satisfy Conjecture \ref{MainConjecture}. In fact, we are presenting Banach spaces which have the following property.

\begin{defn}\label{DefinitionPropertyP}
A Banach space $E$ has {\it property (P)} if for every $x^* \not\in \NA(E, \R)$, the set 
$$ C:=\{y^* \in E^*: |x^*(x)|+|y^*(x)|\leq \|x^*\| \mbox{ for every } x\in B_E \}$$
satisfies that $x^*$ is in the $w^*$-closure of $\R^+ C := \{ \lambda y^*: \lambda >0,~y^*\in C\}$.
\end{defn}

Although artificial at a first sight, it turns out that Banach spaces with property $(P)$ satisfy Conjecture \ref{MainConjecture}.

\begin{lem}\label{LemmaPropertyP}
Let $E$ be a Banach space with property $(P)$. Then, $x^* \in \NA(E, \R)$ if and only if $\delta_{x^*} \in \NA(FBL[E], \R)$.
\end{lem}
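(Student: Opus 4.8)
The plan is to prove the two implications separately, observing that the forward direction ``$x^* \in \NA(E,\R) \Rightarrow \delta_{x^*} \in \NA(FBL[E],\R)$'' is precisely Proposition \ref{PropDeltaNonNormAttaining} and needs no hypothesis on $E$. So the content lies entirely in the converse, which I would establish in contrapositive form: assuming $x^* \notin \NA(E,\R)$, I show $\delta_{x^*} \notin \NA(FBL[E],\R)$. First I normalize so that $\|x^*\|=1$; this is harmless since both norm-attainment and the set $C$ of Definition \ref{DefinitionPropertyP} are invariant under positive scaling, and $\|\delta_{x^*}\|=\|x^*\|$ by Lemma \ref{PropNormFree}. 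Arguing by contradiction, suppose there is $f\in S_{FBL[E]}$ with $\delta_{x^*}(f)=f(x^*)=1$ (replacing $f$ by $-f$ if necessary).

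The first key step is to show that $f$ vanishes on the set $C$. The point is that for $y^*\in C$, the defining inequality $|x^*(x)|+|y^*(x)|\le\|x^*\|=1$ for all $x\in B_E$ says exactly that the two-element family $\{x^*,y^*\}$ is admissible in the supremum defining $\|\cdot\|_{FBL[E]}$. Hence
\[
1=\|f\|_{FBL[E]}\ge |f(x^*)|+|f(y^*)|=1+|f(y^*)|,
\]
which forces $f(y^*)=0$. By positive homogeneity of the functions in $FBL[E]$ this upgrades to $f\equiv 0$ on the whole cone $\R^+ C$.

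I would then invoke property $(P)$: since $x^*$ lies in the $w^*$-closure of $\R^+ C$, there is a net $(z_\alpha)\subseteq \R^+ C$ with $z_\alpha\to x^*$ in the $w^*$-topology, and $f(z_\alpha)=0$ for every $\alpha$. To derive the contradiction $f(x^*)=0$ I must pass the limit through $f$, and here is the main obstacle: functions in $FBL[E]$ are only guaranteed to be $w^*$-continuous on bounded sets, whereas the cone $\R^+ C$ is unbounded. I expect to resolve this via the uniform boundedness principle: since $z_\alpha(x)\to x^*(x)$ for each $x\in E$, the family $(z_\alpha)$ is pointwise bounded, hence norm-bounded, say $\|z_\alpha\|\le M$. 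Positive homogeneity then extends the $w^*$-continuity of $f$ from $B_{E^*}$ to the ball $M B_{E^*}$, on which the bounded net $(z_\alpha)$ converges to $x^*$; therefore $f(x^*)=\lim_\alpha f(z_\alpha)=0$, contradicting $f(x^*)=1$. I anticipate that this interplay between the unbounded cone and the merely local $w^*$-continuity of $FBL[E]$-functions is the only genuinely delicate point, the rest being routine bookkeeping with the free-lattice norm and positive homogeneity.
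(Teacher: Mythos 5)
Your strategy coincides with the paper's: the forward implication is quoted from Proposition \ref{PropDeltaNonNormAttaining}, and for the converse both you and the paper normalize $\|x^*\|=1$, take $f\in S_{FBL[E]}$ with $f(x^*)=1$, and observe that for $y^*\in C$ the pair $\{x^*,y^*\}$ is admissible in the supremum defining $\|\cdot\|_{FBL[E]}$, so that $1=\|f\|_{FBL[E]}\geq |f(x^*)|+|f(y^*)|$ forces $f(y^*)=0$; positive homogeneity then gives $f\equiv 0$ on $\R^+C$. Up to this point your argument is correct and is exactly the paper's computation (the paper merely runs it in the contrapositive direction, producing $y^*\in C$ with $f(y^*)>0$ and contradicting $\|f\|_{FBL[E]}=1$).

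The gap is in your final step. The claim that ``since $z_\alpha(x)\to x^*(x)$ for each $x\in E$, the family $(z_\alpha)$ is pointwise bounded, hence norm-bounded'' is false for nets: Banach--Steinhaus requires $\sup_\alpha |z_\alpha(x)|<\infty$ for every $x$, and a convergent \emph{net} of scalars, unlike a convergent sequence, need not be bounded, because the index beyond which $z_\alpha(x)$ is close to $x^*(x)$ depends on $x$, and a directed set may have infinitely many indices that are not beyond it. Concretely, if $E$ is infinite-dimensional, index a net by pairs $(F,n)$ with $F\subseteq E$ finite and $n\in\N$ (ordered by inclusion in the first coordinate and by $\leq$ in the second) and set $z_{(F,n)}=n\, z_F^*$, where $z_F^*\in S_{E^*}$ vanishes on the span of $F$; this net $w^*$-converges to $0$ while $\|z_{(F,n)}\|=n$, and every subnet is norm-unbounded. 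So no bound $M$ can be extracted, and the limit cannot be passed through $f$, which is $w^*$-continuous only on bounded sets. You identified the genuinely delicate point correctly, but the proposed fix rests on a false principle. Be aware that the paper's own proof is tersest at exactly this spot: it asserts that $w^*$-continuity of $f$ on $B_{E^*}$ together with property $(P)$ yields $y^*\in C$ with $f(y^*)>0$, which likewise needs the approximating net to stay (after rescaling) in a fixed ball. What makes everything work where the lemma is actually applied is that the nets witnessing property $(P)$ there are norm-bounded: in Proposition \ref{l_1(A)PropertyP} the elements $\lambda y^*$ agree with $x^*$ on $F$ and vanish elsewhere, so $\|\lambda y^*\|\leq 1$, and in Theorem \ref{conjepredu} one checks $\|\tfrac{1}{\lambda}y^*\|\leq 2$. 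The honest repair---for your write-up and for the paper---is to strengthen property $(P)$ to require $x^*$ to lie in the $w^*$-closure of $\R^+C\cap M B_{E^*}$ for some $M>0$; with that hypothesis your bounded-net limiting argument is exactly right, but as written the uniform boundedness step is a genuine error.
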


\begin{proof} By Proposition \ref{PropDeltaNonNormAttaining}, we just need to prove that if $x^*\not\in \NA(E, \R)$, then $\delta_{x^*} \not\in \NA(FBL[E], \R)$. Indeed, let $x^* \not\in \NA(E, \R)$. Suppose, without loss of generality, that $\|x^*\| = 1$. Consider the set
\begin{equation*} 
C:=\{y^* \in B_{E^*}: |x^*(x)|+|y^*(x)|\leq \|x^*\| \mbox{ for every } x\in B_E \}.
\end{equation*} 
If $\delta_{x^*}$ is norm-attaining, then there is $f \in FBL[E]$ with $\|f\|_{FBL[E]} = 1$ such that $\delta_{x^*}(f) = f(x^*) = \|\delta_{x^*}\| = \|x^*\| = 1$. Since $f(x^*)=1$, $f$ is $w^*$-continuous on $B_{E^*}$, and $E$ has property $(P)$, there is $y^*\in C$ such that $f(y^*)>0$. Thus, for every $x\in B_E$, $|x^*(x)|+|y^*(x)|\leq 1$ and it follows from the definition of the norm $\|\cdot\|_{FBL[E]}$ that
	\begin{equation*}
		\|f\|_{FBL[E]} \geq |f(y^*)| + |f(x^*)| > |f(x^*)| = 1,
	\end{equation*}	
	which is a contradiction.	
\end{proof}
 
Let us remark that property $(P)$ is {\it not} satisfied by every Banach space $E$. Indeed, every separable Banach space can be endowed with an equivalent norm which makes it strictly convex (see the proof of \cite[Theorem 8.13]{Fabian}). 
Moreover, if $E^*$ is strictly convex, then every point of the sphere is an extreme point. If $E$ is not reflexive, then there are points on the sphere of $E^*$ which are extreme but do not attain its norm. Now, notice that $C \neq \{0\} $ for a point $x^*$ in the sphere if and only if $x^*$ is not an extreme point.

\bigskip

On the other hand, we have some Banach spaces $E$ satisfying property $(P)$.  

\begin{prop}\label{l_1(A)PropertyP}
Let $A$ be an infinite set. Then, $\ell_1(A)$ has property $(P)$. In particular, $x^* \in \NA(\ell_1(A), \R)$ if and only if $\delta_{x^*} \in \NA(FBL[\ell_1(A)], \R)$.
\end{prop}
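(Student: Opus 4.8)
The plan is to pass to the concrete dual $\ell_1(A)^* = \ell_\infty(A)$ and give an explicit description of the set $C$ attached to a non-norm-attaining functional. Write $x^* = (a_\alpha)_{\alpha\in A}\in\ell_\infty(A)$. Recalling that a functional on $\ell_1(A)$ attains its norm exactly when the supremum $\sup_\alpha|a_\alpha| = \|x^*\|$ is attained at some coordinate, after normalizing so that $\|x^*\| = 1$ (which is legitimate since $\overline{\R^+ C}^{\,w^*}$ is a cone) the hypothesis $x^*\notin\NA(\ell_1(A),\R)$ becomes the family of \emph{strict} inequalities $|a_\alpha| < 1$ for every $\alpha\in A$. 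This is the sole place where non-attainment enters, and it is precisely what makes the construction below run.

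Next I would identify $C$ explicitly. For $y^* = (b_\alpha)\in\ell_\infty(A)$, the estimate $|x^*(x)| + |y^*(x)| \le \sum_\alpha(|a_\alpha|+|b_\alpha|)|x_\alpha|$ valid for $x\in B_{\ell_1(A)}$, together with evaluation at the basis vectors $e_\alpha$, yields
$$\sup_{x\in B_{\ell_1(A)}}\bigl(|x^*(x)| + |y^*(x)|\bigr) = \sup_{\alpha\in A}\bigl(|a_\alpha| + |b_\alpha|\bigr).$$
Hence $C = \{(b_\alpha)_{\alpha\in A}\in\ell_\infty(A) : |a_\alpha| + |b_\alpha|\le 1 \text{ for all } \alpha\in A\}$, which in particular forces $\|y^*\|_\infty\le 1$ (consistent with the $y^*\in B_{E^*}$ that appears in the proof of Lemma \ref{LemmaPropertyP}).

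The heart of the argument is to produce a net in $\R^+ C$ converging $w^*$ to $x^*$, indexed by the finite subsets $F\subseteq A$ directed by inclusion. For each such $F$ I would set $\lambda_F := \max_{\alpha\in F}\frac{|a_\alpha|}{1-|a_\alpha|}$ (and $\lambda_F := 1$ if this maximum is $0$), which is finite precisely because $|a_\alpha| < 1$, and define $y^*_F$ by $b_\alpha := a_\alpha/\lambda_F$ for $\alpha\in F$ and $b_\alpha := 0$ otherwise. The choice of $\lambda_F$ guarantees $|a_\alpha| + |b_\alpha|\le 1$ for every $\alpha$, so $y^*_F\in C$, while by construction $z^*_F := \lambda_F\, y^*_F$ is exactly the truncation of $x^*$ to $F$ (agreeing with $x^*$ on $F$, vanishing elsewhere); thus $z^*_F\in\R^+ C$.

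Finally I would verify that $(z^*_F)_F$ converges $w^*$ to $x^*$: given $x = (x_\alpha)\in\ell_1(A)$ and $\varepsilon>0$, summability provides a finite $F_0$ with $\sum_{\alpha\notin F_0}|x_\alpha| < \varepsilon$, and then for every $F\supseteq F_0$ one has $|z^*_F(x) - x^*(x)| = \bigl|\sum_{\alpha\notin F} a_\alpha x_\alpha\bigr| \le \sum_{\alpha\notin F_0}|x_\alpha| < \varepsilon$. This places $x^*$ in the $w^*$-closure of $\R^+ C$, establishing property (P), and the ``in particular'' clause is then immediate from Lemma \ref{LemmaPropertyP}. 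I do not expect a genuine obstacle: once $C$ is identified the argument is elementary, and the only point needing care is that $\ell_1(A)$ contains vectors of infinite support, which is exactly what the truncation net and the tail estimate above are designed to absorb.
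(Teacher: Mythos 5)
Your proof is correct and follows essentially the same route as the paper's: both realize the finite truncations $x^*\chi_F$ as elements of $\R^+ C$ (your scaling $\lambda_F$ versus the paper's factor $1-\alpha$ with $\alpha=\max_{t\in F}|x^*(t)|$ is a cosmetic difference) and conclude by $w^*$-convergence of these truncations along the net of finite subsets of $A$. Your explicit coordinatewise identification of $C$ is the only extra step; the paper handles this implicitly through the equivalent condition $\|x^*\pm y^*\|\le 1$.
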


\begin{proof}
Let $x^*\in \ell_\infty(A)=\ell_1(A)^*$ with $\Vert x^*\Vert=1$. Suppose that it does not attain its norm. Let us prove that, given any finite set $F\subset A$, we can find $y^*\in \ell_\infty(A)$ and $\lambda>0$ such that $\Vert x^*\pm y^*\Vert\leq 1$ and $\lambda y^*(t)=x^*(t)$ holds for every $t\in F$. This is enough in view of the $w^*$-topology on $\ell_\infty(A)$.  To this end, let $F\subset A$ be an arbitrary finite set. Since $x^*$ does not attain its norm, we have that $\sup_{t\in F}\vert x^*(t)\vert=\alpha<1$. Now, define $y^*\in \ell_\infty(A)$ by 
$$y^*(t):=\left\{\begin{array}{cc}
(1-\alpha)x^*(t) & \mbox{if }t\in F;\\
0 & \mbox{ otherwise.}
\end{array} \right.$$
Let us prove that $\Vert x^*\pm y^*\Vert=\sup\limits_{t\in A} \vert x^*(t)\pm y^*(t)\vert\leq 1$. For this, we consider two cases.
\begin{enumerate}
\item If $t\notin F$, then we get that $y^*(t) = 0$ and so
$$\vert x^*(t)\pm y^*(t)\vert=\vert x^*(t)\vert < \Vert x^*\Vert=1.$$
\item If $t\in F$, then we get that $\vert x^*(t)\vert\leq \alpha$ and so
$$\vert x^*(t)\pm y^*(t)\vert\leq \alpha+(1-\alpha)\vert x^*(t)\vert\leq 1.$$
\end{enumerate}
Hence, taking supremum in $A$, we have $\Vert x^*\pm y^*\Vert\leq 1$. Finally, taking $\lambda:=\frac{1}{1-\alpha}$, for any $t\in F$ we have that $\lambda y^*(t)=\lambda (1-\alpha)x^*(t)=x^*(t)$, as desired.
\end{proof}

Isometric preduals of $\ell_1(A)$, for $A$ an infinite set, satisfy Conjecture \ref{MainConjecture}. Indeed, this is a consequence of the fact that isometric preduals of $\ell_1$ have property $(P)$, as we can see in the following theorem.

\begin{thm}\label{conjepredu}
Let $E$ be an isometric predual of $\ell_1(\Gamma)$ for some infinite set $\Gamma$. Then an element $x^* \in \NA(E, \R)$ if and only if $\delta_{x^*} \in \NA(FBL[E], \R)$ (in other words, $E$ satisfies Conjecture \ref{MainConjecture}).
\end{thm}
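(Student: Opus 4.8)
The plan is to derive the statement from Lemma \ref{LemmaPropertyP}, by showing that every isometric predual $E$ of $\ell_1(\Gamma)$ has property $(P)$. Throughout I identify $E^{*}=\ell_1(\Gamma)$ and $E^{**}=\ell_\infty(\Gamma)$, writing $\hat x\in\ell_\infty(\Gamma)$ for the canonical image of $x\in E$, so that $\hat x(\gamma)=\langle e_\gamma,x\rangle$. Fix $x^{*}=(x^{*}_\gamma)_\gamma\in\ell_1(\Gamma)$ with $\|x^{*}\|=1$ and $x^{*}\notin\NA(E,\R)$, and put $S:=\supp(x^{*})$. The first step is to pin down the set $C$ from Definition \ref{DefinitionPropertyP}. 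Using the identity $|a|+|b|=\max\{|a+b|,|a-b|\}$ one checks that $y^{*}\in C$ if and only if $\|x^{*}+y^{*}\|\le 1$ and $\|x^{*}-y^{*}\|\le 1$; analysing the equality case of the triangle inequality in $\ell_1(\Gamma)$ (exactly as in the proof of Proposition \ref{l_1(A)PropertyP}) this becomes
\begin{equation*}
C=\Big\{\,y^{*}\in\ell_1(\Gamma):\ |y^{*}_\gamma|\le|x^{*}_\gamma|\ \text{for all }\gamma,\ \text{and}\ \sum_{\gamma}y^{*}_\gamma\,\sgn(x^{*}_\gamma)=0\,\Big\}.
\end{equation*}
In particular every element of $C$ is supported in $S$, and $C$ is a symmetric convex set, so $\R^{+}C$ is a convex cone.

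Second, I would reformulate property $(P)$ — which asks that $x^{*}$ lie in the $w^{*}$-closure of $\R^{+}C$ — via Hahn--Banach separation in $(E^{*},w^{*})$, whose dual is $E$. Since $\R^{+}C$ is a cone, $x^{*}\notin\overline{\R^{+}C}^{\,w^{*}}$ would produce $x\in E$ with $\langle x^{*},x\rangle>0$ and $\langle y^{*},x\rangle\le 0$ for all $y^{*}\in\R^{+}C$; by symmetry of $C$ this forces $x\perp C$. A direct computation of the annihilator of $C$ (its closed linear span is the hyperplane $\{\,y^{*}\in\ell_1(S):\sum_\gamma y^{*}_\gamma\sgn(x^{*}_\gamma)=0\,\}$ of $\ell_1(S)$) shows that $x\perp C$ is equivalent to $\hat x(\gamma)=c\,\sgn(x^{*}_\gamma)$ on $S$ for some scalar $c$, in which case $\langle x^{*},x\rangle=c\sum_{\gamma\in S}|x^{*}_\gamma|=c$. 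Hence property $(P)$ holds at $x^{*}$ \emph{precisely when there is no} $x\in E$ whose canonical image restricts to $\sgn(x^{*})$ on $S$.

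Third, the heart of the argument is to show that the failure of $x^{*}$ to attain its norm rules out such an $x$. Suppose for contradiction that $x\in E$ satisfies $\hat x|_{S}=\sgn(x^{*})$. Then $\langle x^{*},x\rangle=\sum_{\gamma\in S}|x^{*}_\gamma|=1$ and $|\hat x(\gamma)|=1$ on $S$, so $\|\hat x\|_\infty\ge 1$; if I can replace $x$ by an element $x'\in E$ with $\hat x'|_{S}=\sgn(x^{*})$ and $\|x'\|\le 1$, then $x^{*}(x')=1=\|x^{*}\|\,\|x'\|$ and $x^{*}$ attains its norm, a contradiction. At the level of the bidual the element $\mathrm{med}(-1,\hat x,1)\in\ell_\infty(\Gamma)$ does the job (it agrees with $\sgn(x^{*})$ on $S$ and has sup-norm $1$), so everything reduces to a truncation/proximinality statement: this truncation — equivalently, an element of $E$ realising the quotient norm of $x$ modulo $E_0:=\{z\in E:\hat z|_{S}=0\}$ — actually belongs to $E$. \textbf{This is the step where I expect the main obstacle}, since $E$ is not a sublattice of $\ell_\infty(\Gamma)$ and truncations need not return to $E$; I would settle it using the isometric $L_1$-predual structure of $E$ (for instance the intersection property of balls characterising $L_1$-preduals, applied to the family encoding $\|x'\|\le 1$ together with $\hat x'(\gamma)=\sgn(x^{*}_\gamma)$ for $\gamma\in S$), which for $c_0(\Gamma)$ and for $C(K)$-spaces collapses to the obvious coordinatewise truncation. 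An alternative, more constructive route mirrors Proposition \ref{l_1(A)PropertyP} directly: one builds a net in $\R^{+}C$ by matching $x^{*}$ on larger finite sets $F\subseteq S$ and compensating the balance condition $\sum_\gamma y^{*}_\gamma\sgn(x^{*}_\gamma)=0$ with sign-balanced mass supported where the finitely many test elements of $E$ are small — and the non-attainment of $x^{*}$ is exactly what guarantees that such coordinates exist — but controlling the compensation against arbitrary predual test elements is again the crux.
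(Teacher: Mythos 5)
Your Steps 1 and 2 are correct: the computation of $C$ via the equality case of the triangle inequality in $\ell_1(\Gamma)$, and the Hahn--Banach separation argument showing that property $(P)$ holds at a non-norm-attaining $x^*$ if and only if no $x\in E$ has canonical image equal to $\sgn(x^*)$ on $S=\supp(x^*)$, are both sound, and they make explicit something the paper never states. But this is only a reformulation: the entire content of the theorem now sits in your Step 3, which you yourself flag as open, and neither of your two sketched routes closes it.

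Concretely: (a) the ball-intersection characterizations of $L_1$-preduals concern \emph{finitely many} pairwise intersecting \emph{balls}, and typically only produce points in $\eps$-enlarged balls; here you must intersect $B_E$ with infinitely many hyperplanes $\{z\in E:\ \hat z(\gamma)=\sgn(x^*_\gamma)\}$, $\gamma\in S$, and you need the resulting distance from $0$ (which is automatically $\geq 1$) to equal $1$ \emph{and be attained}. No standard intersection property delivers this; proving it is essentially the theorem itself. (b) The compensation scheme breaks precisely at the clause ``supported where the finitely many test elements of $E$ are small'': in Proposition \ref{l_1(A)PropertyP} the test vectors lie in $\ell_1(A)$ and hence vanish at infinity, but here they lie in the predual $E$ and need not be small at \emph{any} coordinate --- for $E=c$ the constant function $\mathbf{1}$ has canonical image identically $1$ on $\Gamma$, and non-attainment of $x^*$ cannot create coordinates where it is small. (One can replace smallness by cancellation, i.e.\ total-mass-zero compensation supported far out, and this can be made to work in $c$, but doing it in an arbitrary isometric $\ell_1(\Gamma)$-predual is again the whole problem.) This is exactly where the paper injects real structure, by a very different route: it first reduces to separable $E$ via Theorem \ref{falloconjec}, together with \cite{rao2} and a Radon-Nikod\'ym property argument showing that a separable ai-ideal of $E$ is again an isometric $\ell_1$-predual, and then invokes Gasparis's theorem \cite{Gasparis} to get $w^*$-continuous contractive projections $Q_n$ onto the initial segments $F_n$ with $\bigcup_n Q_n^*F_n^*$ dense in $E$; choosing the test vectors inside $Q_n^*F_n^*$ makes evaluation against them factor through $Q_n$, so the compensating mass is hidden \emph{exactly}, not approximately. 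Note finally that you aim at property $(P)$ for arbitrary $\Gamma$, which is formally stronger than what the paper establishes (it proves property $(P)$ only for separable preduals and transfers the conjecture, not property $(P)$, through the separable reduction), so the step you are missing is, if anything, harder than the corresponding step in the paper. As it stands, your proposal is a correct reduction plus an unproved claim, not a proof.
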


\begin{proof}
We show first that, by Theorem \ref{falloconjec}, we can suppose that $\Gamma$ is a countable set, so $E$ is an isometric predual of $\ell_1$. Indeed, suppose that $Z$ is a separable ai-ideal in $E$. By the proof of \cite[Theorem 1]{rao2}, $Z$ is a separable isometric predual of $L_1(\mu)$ for some measure $(\Omega,\Sigma,\mu)$. Moreover, $Z$ is Asplund since it is a subspace of an Asplund space (note that $E^*$ has the Radon-Nikod\'ym property). Thus, $Z^*$ is separable and has the Radon-Nikod\'ym property. This implies that $Z^* = \ell_1$; in other words, $\mu$ is purely atomic. Indeed, assume by contradiction that there exists some subset $A$ with $0<\mu(A)$ and such that $\mu_{|A}$ does not contain any atom. Then the mapping
$$\begin{array}{ccc}
L_1(\mu)& \longrightarrow & L_1(\mu_{|A})\oplus_1 L_1(\mu_{|\Omega\setminus A})\\
f & \longmapsto & (f\chi_A,f\chi_{\Omega\setminus A})
\end{array} $$
is an onto linear isometry, so $L_1(\mu)$ contains an isometric copy of $L_1(\mu_{|A})$, and $L_1(\mu_{|A})$ fails the Radon-Nikod\'ym property because it is easy to see that its unit ball does not have any extreme point, which entails a contradiction with the fact that $Z^*=L_1(\mu)$ has the Radon-Nikod\'ym property. This contradiction proves that $Z^*=\ell_1$.

Let $(e_n^*)_{n=1}^{\infty}$ be the Schauder basis of $E^*$ isometrically equivalent to the usual $\ell_1$-basis, i.e. 
\begin{equation*} 
\left\| \sum_{i=1}^n a_i e_i^* \right\| = \sum_{i=1}^n |a_i| 
\end{equation*} 
for every $n \in \mathbb{N}$ and scalar sequences $(a_i)_{i=1}^n$. Denote by $F_n$ the closed span of $\{ e_1^*, \ldots, e_n^*\}$. Suppose that $x^* \in E^*$ does not attain its norm. Without loss of generality, we may suppose that $\|x^*\| = 1$. Then, $x^*$ has infinite support. Indeed, this follows from the following claim which may have its own interest.

\bigskip

\noindent	
{\bf Claim}: Let $E$ be a Banach space. Suppose that $E^*$ is isometric to $\ell_1(\N)$. If $x^* \in E^*$ is finitely supported, then $x^* \in \NA(E, \R)$.
\noindent	
\vspace{0.2cm}

By \cite[Corollary 4.1]{Gasparis}, there exists a $w^*$-continuous contractive projection $Q_n$ from $E^*$ onto $F_n$ such that $E_n := Q_n^* F_n^*$ is isometric to $\ell_{\infty}^n$ and $\bigcup_{n=1}^{\infty} E_n$ is dense in $E$. Suppose that $x^* \in E^*$ is finitely supported. So, for some $n \in \N$, we have that $x^* = \sum_{j=1}^n a_j e_j^* \in F_n$ with $a_j \in \R$ for $j=1,\ldots, n$. This implies that $Q_n(x^*) = x^*$. Given $x \in E$, if $J \colon E \longrightarrow E^{**}$ denotes the embedding of $E$ into $E^{**}$, we have that
	\begin{eqnarray*}
		x^*(x) = J(x)(x^*) = J(x)(Q_n(x^*)) &=& (J(x) \circ Q_n)(x^*) \\
		&=& (Q_n^* \circ J(x))(x^*) \\
		&=& x^*(Q_n^*J(x)).
	\end{eqnarray*}
	This shows that, for a fixed $x \in E$, the action of $x^*$ at $x$ is the same as the action of $x^*$ at $Q_n^*J(x)$.
	Since $\|Q_n\| = 1$ and $B_{Q_n^*F_n^*} = B_{E_n}$ is compact, we conclude that $x^*$ must attain its norm and this proves the claim.\\

Now, set $x^* = \sum_{j=1}^{\infty} a_j e_j^*$ with $a_j \in \R$ for every $j \in \N$. We will prove that $x^*$ is in the $w^*$-closure of the set $\R^+ C = \{\lambda y^*: \lambda > 0, y^* \in C\}$, where $C$ is the set defined in Definition \ref{DefinitionPropertyP}. In order to do this, for each $n \in \N$ we construct elements $y^*\in C$ as follows.

If $\sum_{k \leq n} |a_k| = 0$, we just take $y^* = 0 \in C$. Suppose now that $\sum_{k \leq n} |a_k| > 0$. Since $x^*$ has infinite support, there is $m > n$ such that $a_m \not= 0$. Since $x^* \in E^*$ and
	\begin{equation*}
	1 = \|x^*\| = \left\| \sum_{j=1}^{\infty} a_j e_j^* \right\| = \sum_{j=1}^{\infty} |a_j|,
	\end{equation*}
	we can pick $m$ big enough so that
	\begin{equation*}
	|a_m| = \lambda \sum_{k \leq n} |a_k|
	\end{equation*}	
	for some $\lambda \in (0, 1)$. Set
	\begin{equation*}
	y^* := \lambda \sum_{k \leq n}  a_k e_k^* - a_m e_m^* \in E^*.
	\end{equation*}
	Then, 
	\begin{eqnarray*}
		\|x^* + y^*\| &=& \left\| \sum_{k=1}^{\infty} a_k e_k^* + \sum_{k \leq n} \lambda a_k e_k^* - a_m e_m^* \right\| \\
		&=&\left\| \lambda \sum_{k \leq n} a_k e_k^* + \sum_{k\not=m} a_k e_k^* \right\| \\
		&\leq& |a_m| + \sum_{k \not= m} |a_k| \\
		&=& \sum_{k=1}^{\infty} |a_k| = \|x^*\| = 1,	
	\end{eqnarray*}
	and
	\begin{eqnarray*}
		\|x^* - y^*\| &=& \left\| \sum_{k=1}^{\infty} a_k e_k^* - \sum_{k \leq n} \lambda a_k e_k^* + a_m e_m^* \right\| \\
		&=& \left\| (1 - \lambda) \sum_{k \leq n} a_k e_k^* + \sum_{\substack{k>n\\k\not= m}} a_k e_k^* +2 a_m e_m^* \right\| \\
		&=& (1 - \lambda) \sum_{k \leq n} |a_k| + \sum_{\substack{k>n\\k\not= m}} |a_k| + 2 |a_m| \\
		&=& \sum_{k \not= m} |a_k| - \lambda \sum_{k\leq n} |a_k| + 2 \lambda \sum_{k \leq n} |a_k| \\
		&=& \sum_{k \not= m} |a_k| + \lambda \sum_{k \leq n} |a_k| \\
		&=& \sum_{k \not= m} |a_k| + |a_m| = \sum_{k=1}^{\infty} |a_k| = \|x^*\| \leq 1.
	\end{eqnarray*} 
	This implies that $y^* \in C$. 
	
	Let us end by proving, using the element $y^*$ as defined above, that $x^*$ is in the $w^*$-closure of the set $\mathbb R^+ C$. To this end, pick a $w^*$-open set 
	\begin{equation*} 
	W:=\{z^*\in E^*: \vert x^*(x_i)-z^*(x_i)\vert<\varepsilon\ \mbox{for} \ 1\leq i\leq k\}
	\end{equation*} 
	for certain $x_1,\ldots, x_k\in E$. Since $\bigcup_{n\in\mathbb N} E_n$ is dense in $E$, we can assume that $x_i\in E_n$ for a large enough $n\in\mathbb N$ and for every $i\in\{1,\ldots, k\}$. Pick $y^*\in C$ and $0<\lambda<1$ such that $Q_n(x^*)=\frac{1}{\lambda}Q_n(y^*)$ as constructed before. Given $i\in\{1,\ldots, k\}$, we have that, since $x_i\in E_n=Q_n^*F_n^*$, then $x_i=Q_n^*(x_i)$. Hence,
	\begin{eqnarray*} 
		\frac{1}{\lambda}y^*(x_i) = \frac{1}{\lambda}y^*(Q_n^*(x_i)) &=& \frac{1}{\lambda}Q_n^*(x_i)(y^*) \\
		&=&\frac{1}{\lambda}(J(x_i)\circ Q_n)(y^*) \\ 
		&=& J(x_i) \left(\frac{1}{\lambda}Q_n (y^*) \right)\\
		&=& J(x_i) (Q_n (x^*))\\
		&=& Q_n^*(x_i)(x^*) \\
		&=& x^*(Q_n^*(x_i))\\
		&=& x^*(x_i).
	\end{eqnarray*} 
	Since $i$ was arbitrary, we get that $\frac{1}{\lambda}y^*\in W$, so we are done.
\end{proof}

In order to be completely ready to prove Theorem \ref{MainTheorem}, we need a last result. %Recall that a subspace $F$ of a Banach space $E$ is said to be \emph{1-complemented} if $F$ is the range of a norm-one projection $P$ on $E$.

\begin{thm}\label{Theorem1Complemented}
Let $E$ be a Banach space and assume that $F$ is a $1$-complemented subspace of $E$. Suppose that there exists $y^*\in F^*$ such that $\delta_{y^*} \not\in \NA(FBL[F], \R)$. Then, there exists $x^*\in E^*$ such that $\delta_{x^*} \not\in \NA(FBL[E], \R)$.
\end{thm}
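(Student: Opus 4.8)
The plan is to take $x^* := P^* y^* = y^* \circ P \in E^*$, where $P\colon E \longrightarrow F$ is a norm-one projection witnessing that $F$ is $1$-complemented in $E$, and then to pull any norm-attaining witness for $\delta_{x^*}$ on $FBL[E]$ back to one for $\delta_{y^*}$ on $FBL[F]$, contradicting the hypothesis. Because $P$ restricts to the identity on $F$, its adjoint $P^*\colon F^* \longrightarrow E^*$ is a genuine isometric Hahn-Banach extension operator: one has $\|P^*\| = \|P\| = 1$ and $P^*(f^*)(z) = f^*(Pz) = f^*(z)$ for all $z \in F$ and $f^* \in F^*$. This places us in exactly the situation treated in the proof of Theorem \ref{falloconjec}, but with the almost isometric operator $\varphi$ there replaced by the honest isometry $P^*$, so the argument becomes cleaner and no ``finitely many coordinates'' reduction is needed. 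By Lemma \ref{PropNormFree}, $\|\delta_{x^*}\| = \|x^*\|$ and $\|\delta_{y^*}\| = \|y^*\|$; moreover, restricting $x^*$ to $B_F$ shows $\|x^*\| = \|y^*\|$, so after rescaling I may assume $\|x^*\| = \|y^*\| = 1$.

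Arguing by contradiction, suppose $\delta_{x^*} \in \NA(FBL[E], \R)$, so there is $f \in S_{FBL[E]}$ with $f(x^*) = \|\delta_{x^*}\| = 1$. I would then set $g := f \circ P^*\colon F^* \longrightarrow \R$ and verify two things. First, $\|g\|_{FBL[F]} \leq 1$: if $f_1^*, \dots, f_n^* \in F^*$ satisfy $\sup_{z \in B_F} \sum_{i=1}^n |f_i^*(z)| \leq 1$, equivalently $\|\sum_{i=1}^n \xi_i f_i^*\|_{F^*} \leq 1$ for every sign vector $(\xi_i) \in \{-1,1\}^n$, then contractivity of $P^*$ gives $\|\sum_{i=1}^n \xi_i P^* f_i^*\|_{E^*} \leq 1$, hence $\sup_{x \in B_E} \sum_{i=1}^n |P^* f_i^*(x)| \leq 1$, and therefore $\sum_{i=1}^n |g(f_i^*)| = \sum_{i=1}^n |f(P^* f_i^*)| \leq \|f\|_{FBL[E]} = 1$. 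Second, $g(y^*) = f(P^* y^*) = f(x^*) = 1 = \|\delta_{y^*}\|$, so $g$ would witness that $\delta_{y^*}$ attains its norm.

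The crux, and the step I expect to be the main obstacle, is checking that $g$ actually lies in $FBL[F]$ rather than being merely a function on $F^*$ of finite norm. I would handle this with the map $\phi\colon \R^{E^*} \longrightarrow \R^{F^*}$, $\phi(h) = h \circ P^*$, which is linear, preserves the pointwise lattice operations, and satisfies $\|\phi(h)\|_{FBL[F]} \leq \|h\|_{FBL[E]}$ by the computation above. The decisive observation is that for every $x \in E$ one has $\phi(\delta_x)(f^*) = P^* f^*(x) = f^*(Px) = \delta_{Px}(f^*)$, so $\phi(\delta_x) = \delta_{Px} \in FBL[F]$ since $Px \in F$. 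Thus $\phi$ sends the whole vector lattice generated by $\{\delta_x : x \in E\}$ into $FBL[F]$, and by contractivity it extends continuously to all of $FBL[E]$ while still landing in $FBL[F]$; in particular $g = \phi(f) \in FBL[F]$. Combined with $\|g\|_{FBL[F]} \leq 1$ and $g(y^*) = 1 = \|\delta_{y^*}\|$, this forces $\delta_{y^*} \in \NA(FBL[F], \R)$, contradicting the hypothesis and yielding the desired $x^*$ with $\delta_{x^*} \notin \NA(FBL[E], \R)$.
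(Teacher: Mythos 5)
Your proposal is correct and is essentially the paper's own proof: the paper also takes $x^* = P^*(y^*)$, argues by contradiction with a witness $f \in S_{FBL[E]}$, bounds $\|f\circ P^*\|_{FBL[F]} \le 1$ by exactly the same sign-vector computation using contractivity of $P^*$, and establishes $f\circ P^* \in FBL[F]$ via the identity $\delta_x \circ P^* = \delta_{P(x)}$. The only cosmetic difference is that the paper finishes the membership step with an explicit Cauchy-sequence argument on approximants depending on finitely many coordinates (deferring to the proof of Theorem \ref{falloconjec}), whereas you phrase the same step as continuous extension of the contractive lattice homomorphism $h \mapsto h\circ P^*$ from the dense sublattice generated by $\{\delta_x : x \in E\}$.
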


\begin{proof} Let $y^* \in F^*$ such that $\delta_{y^*}$ does not attain its norm. Assume, without loss of generality, that $\Vert y^*\Vert=1$. Consider a norm-one projection $P\colon E\longrightarrow F$. Let us define $x^*:=P^*(y^*) \in E^*$. Notice that $x^* \in S_{E^*}$ since $\|y^*\| = 1$ and $F$ is 1-complemented. 
	
\vspace{0.2cm}	
\noindent	
{\bf Claim}: $\delta_{x^*}$ does not attain its norm.	
\noindent	
\vspace{0.2cm}

On the contrary, let us assume that $\delta_{x^*}$ attains its norm. Then, there exists $f\in S_{FBL[E]}$ such that $f(x^*)=f(P^*(y^*))=(f\circ P^*)(y^*)=1$. We will show that $f\circ P^*\in FBL[F]$ and $\Vert f\circ P^* \Vert_{FBL[F]}\leq 1$, which will imply that $\delta_{y^*}$ attains its norm and this will give the desired contradiction.

We prove first that $f\circ P^*$ has finite norm on $FBL[F]$. Indeed, let $y_1^*,\ldots, y_k^*\in F^*$ be such that $\sup\limits_{y\in B_F}\sum_{i=1}^k \vert y_i^*(y)\vert\leq 1$. Then,
\begin{equation*}
\sup_{x \in B_E} \sum_{i=1}^k |(P^*y_i^*)(x))| = \sup_{x \in B_E} \sum_{i=1}^k |y_i^*(P(x))| = \sup_{y \in B_F} \sum_{i=1}^k |y_i^*(y)| \leq 1.
\end{equation*}
%$$\sum_{i=1}^k \vert p^*(y^*)(y)\vert=\sum_{i=1}^k \vert y^*(p(y))\vert\leq \Vert p(y)\Vert\leq \Vert y\Vert\leq 1.$$
Hence
\begin{equation*} 
\sum_{i=1}^k \vert (f\circ P^*)(y_i^*)\vert=\sum_{i=1}^k \vert f(P^*y_i^*)\vert\leq \Vert f\Vert_{FBL[E]}=1,
\end{equation*} 
which proves that $f\circ P^*\in \mathbb R^{Y^*}$ has finite norm and that it is smaller than or equal to $1$. On the other hand, $(f\circ P^*)(y^*)=f(x^*)=1$ by assumption.

Let us finally prove that $f\circ P^*\in FBL[F]$. To this end, take $(f_n)_{n \in \mathbb{N}}$ a sequence depending on finitely many coordinates in $FBL[E]$ such that $f_n\rightarrow f$. Notice that $f_n\circ P^*\in FBL[F]$. Indeed, given any $x\in E$ and $y^*\in F^*$ it follows that
$$(\delta_x\circ P^*)(y^*)=\delta_x(P^*(y^*))=P^*(y^*)(x)=y^*(P(x))=\delta_{P(x)}(y^*),$$
which means that $\delta_{x}\circ P^*=\delta_{P(x)}$. This proves that $f_n\circ P^*\in FBL[F]$ holds for every $n\in\mathbb N$. Now, an argument involving Cauchy condition on the sequence $(f_n)_{n \in \mathbb{N}}$ similar to the one in Theorem \ref{falloconjec} implies that $f\circ P^*\in FBL[F]$.
\end{proof}

\bigskip

We finish the paper by showing that there is no Bishop-Phelps type theorem for lattice homomorphisms. Recall that the Bishop-Phelps theorem states that the set of norm-attaining functionals in a dual Banach space is norm-dense. In the Banach lattice setting the situation is extremely opposite; we cannot approximate any not norm-attaining lattice homomorphism by
norm-attaining lattice homomorphisms.

\begin{thm}\label{TheoremNoBishop-Phelps}
	Let $X$ be a Banach lattice and $x^* \in \Hom(X,\R)$ a lattice homomorphism in $S_{X^*}$ which does not attain its norm. Then, $\|x^*-y^*\| \geq 1$ for any $y^*  \in \Hom(X, \R) \cap \NA(X,\R)$.
\end{thm}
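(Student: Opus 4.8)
The plan is to leverage the structural result proved earlier, namely Lemma \ref{LemmDisjointOrDependent}, which says that any two lattice homomorphisms on $X$ are either linearly dependent or disjoint. The statement asserts that a non-norm-attaining lattice homomorphism $x^*\in S_{X^*}$ cannot be approximated by any norm-attaining lattice homomorphism; concretely, every norm-attaining $y^*\in\Hom(X,\R)$ sits at distance at least $1$ from $x^*$. The key observation is that $x^*$ and any candidate $y^*\in\NA(X,\R)$ must be linearly independent, for otherwise $x^*$ would be a nonnegative scalar multiple of $y^*$, and since $y^*$ attains its norm, so would $x^*$, contradicting the hypothesis.

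First I would fix an arbitrary $y^*\in\Hom(X,\R)\cap\NA(X,\R)$ and split into two cases according to Lemma \ref{LemmDisjointOrDependent}. In the dependent case, write $x^*=\lambda y^*$ for some scalar $\lambda\ge 0$ (both being positive functionals forces the scalar to be nonnegative). If $\lambda=0$ then $x^*=0$, contradicting $\|x^*\|=1$; if $\lambda>0$, then since $y^*$ attains its norm at some $u\in S_X$, the functional $x^*=\lambda y^*$ attains its norm at the same point $u$, again contradicting that $x^*$ does not attain its norm. Hence $x^*$ and $y^*$ are necessarily linearly independent, which is the crucial reduction.

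In the remaining (disjoint) case, I would invoke Corollary \ref{CorDistanceHomomorphisms}, which is precisely tailored to this situation: for linearly independent $x^*,y^*\in\Hom(X,\R)$ one has
\begin{equation*}
\|x^*-y^*\|\ge\max\{\|x^*\|,\|y^*\|\}\ge\|x^*\|=1.
\end{equation*}
This immediately yields the desired lower bound, completing the argument. The entire proof thus reduces to two already-established facts and a short case analysis.

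I do not expect a genuine obstacle here, since the heavy lifting was done in Lemma \ref{LemmDisjointOrDependent} and Corollary \ref{CorDistanceHomomorphisms}. The only point requiring mild care is the linear-dependence case: one must confirm that dependence between two positive functionals forces a nonnegative proportionality constant and that norm-attainment is preserved under positive scaling, so that the non-norm-attaining hypothesis on $x^*$ is genuinely violated. Once that is clear, the disjointness dichotomy does all the remaining work, and the estimate $\|x^*-y^*\|\ge 1$ follows with no further computation.
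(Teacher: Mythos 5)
Your proof is correct and follows essentially the same route as the paper: the paper's proof likewise observes that $x^*$ and $y^*$ must be linearly independent (since dependence together with $y^*$ attaining its norm would force $x^*$ to attain its norm) and then invokes Corollary \ref{CorDistanceHomomorphisms} to get $\|x^*-y^*\|\geq\|x^*\|=1$. The only difference is that you spell out the linear-dependence case explicitly, which the paper compresses into a single sentence.
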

\begin{proof}
Since $y^*$ attains its norm whereas $x^*$ does not, both lattice homomorphisms are linearly independent. By Corollary \ref{CorDistanceHomomorphisms}, we have $\|x^*-y^*\| \geq \|x^*\|=1$.
\end{proof}

\bigskip

In conclusion, we have seen that on several free Banach lattices there exist lattice homomorphism which do not attain their norm. As far as we know, these are the first examples of not norm-attaining lattice homomorphisms in the literature. We wonder if the existence of a lattice homomorphism which does not attain its norm on a Banach lattice $X$ implies that $X$ contains some kind of free structure. In particular, we wonder if $X$ contains an isomorphic copy of a free Banach lattice $FBL[E]$ whenever there exists $x^* \in \Hom(X,\R)$ which does not attain its norm.

\vspace{0.2cm}
\noindent
\textbf{Acknowledgements:} We are grateful to Miguel Mart\'in and Pedro Tradacete for fruitful conversations on the topic of the paper. Moreover, we would like to thank Antonio Avil\'es for pointing out several important ideas regarding Theorem \ref{TheoremDisjointFamiliesInTheBanachSpaceDualOfFBL(A)}.

\end{document}